\documentclass[a4paper,11pt]{article}
\usepackage{amssymb,latexsym,amscd,array}
\usepackage{exscale}
\usepackage[centertags]{amsmath}
\usepackage{amssymb}
\usepackage{amsthm}
\usepackage{dsfont}
\usepackage[all]{xy}
\usepackage{lscape}
\usepackage{pdflscape}
\usepackage{color}
\usepackage{tikz}
\usetikzlibrary{calc,shapes}

\newtheorem{thm}{Theorem}[section]
\newtheorem{prop}[thm]{Proposition}
\newtheorem*{ex}{Example}
\newtheorem{rem}[thm]{Remark}
\newtheorem{lem}[thm]{Lemma}
\newtheorem{defn}[thm]{Definition}
\newtheorem{cor}[thm]{Corollary}

\newcommand{\mbc}{\mathbb{C}}
\newcommand{\mbd}{\mathbb{D}}
\newcommand{\mbl}{\mathbb{L}}
\newcommand{\mbn}{\mathbb{N}}
\newcommand{\mbp}{\mathbb{P}}
\newcommand{\mbq}{\mathbb{Q}}
\newcommand{\mbr}{\mathbb{R}}
\newcommand{\mbv}{\mathbb{V}}
\newcommand{\mbz}{\mathbb{Z}}

\newcommand{\dC}{{\mathbb C}}

\newcommand{\dZ}{{\mathbb Z}}

\newcommand{\dL}{{\mathbb L}}

\newcommand{\ra}{\rightarrow}
\newcommand{\lra}{\longrightarrow}

\newcommand{\mcb}{\mathcal{B}}

\newcommand{\mcd}{\mathcal{D}}

\newcommand{\mch}{\mathcal{H}}

\newcommand{\mcl}{\mathcal{L}}
\newcommand{\mcm}{\mathcal{M}}
\newcommand{\mcn}{\mathcal{N}}
\newcommand{\mco}{\mathcal{O}}

\newcommand{\mcr}{\mathcal{R}}
\newcommand{\mcs}{\mathcal{S}}

\newcommand{\cD}{\mathcal{D}}

\newcommand{\cM}{\mathcal{M}}

\newcommand{\p}{\partial}

\DeclareMathOperator{\Spec}{\textup{Spec}\,}

\DeclareMathOperator{\FL}{\textup{FL}}

\numberwithin{equation}{section}
\addtolength{\textwidth}{+4cm} \addtolength{\textheight}{+2cm}
\hoffset-2cm \voffset-1cm \setlength{\parskip}{5pt}
\setlength{\parskip}{8pt}
\setlength{\parindent}{0pt}

\begin{document}
\title{Laurent Polynomials, GKZ-hypergeometric Systems and\\ Mixed Hodge Modules}
\author{Thomas Reichelt}
\date{\today}
%\email{thomas.reichelt@math.uni-mannheim.de}
%\address{Lehrstuhl VI f\"{u}r Mathematik\\
%Universit\"{a}t Mannheim\\
%Seminargeb\"{a}ude A5\\
%68131 Mannheim}
%\curraddr{D\'{e}partement de math\'{e}matiques et applications\\
%Ecole normale sup\'{e}rieure\\
%45 rue d'Ulm\\
%F-75230 Paris Cedex 05 }
%\classification{14D07, 32S35, 32S40, 32G34}
%\keywords{Gau\ss-Manin system, hypergeometric $\cD$-module, Radon transformation, mixed Hodge module}
%\thanks{The author is supported by a postdoctoral fellowship of the ``Fondation sciences math\'ematiques de Paris'' and receives partial support by the ANR grant ANR-08-BLAN-0317-01 (SEDIGA).}
\maketitle

\begin{abstract}
We endow certain GKZ-hypergeometric systems with a natural structure of a mixed Hodge module, which is compatible with the mixed Hodge module structure on the Gau\ss-Manin system of an associated family of Laurent polynomials. As an application we show that the underlying perverse sheaf of a GKZ-system with rational parameter has quasi-unipotent local monodromy.
\end{abstract}

\renewcommand{\thefootnote}{}
\footnote{
2010 \emph{Mathematics Subject Classification.}
14D07, 32S35, 32S40, 32G34, \\
Keywords: Gau\ss-Manin system, hypergeometric $\cD$-module, Radon transformation, mixed Hodge module\\
The author was supported by a postdoctoral fellowship of the ``Fondation sciences math\'ematiques de Paris'' \\
and received partial support by the ANR grant ANR-08-BLAN-0317-01 (SEDIGA).
}

\section*{Introduction}
At the end of the 80's Gelfand, Kapranov and Zelevinsky introduced differential equations, which are a vast generalization of Gau\ss's hypergeometric equation and which are nowadays called GKZ-hypergeometric systems. Since then, GKZ-systems found applications in many fields of mathematics like representation theory, combinatorics and in particular in the area of mirror symmetry. 

The initial data for a GKZ-system is an integer matrix $A$ together with a parameter vector $\beta$. It is regular singular if and only if it is homogeneous, i.e. if the vector $(1,\ldots ,1)$ is in the row span of $A$. For a fixed matrix $A$ the structure of the GKZ-system depends heavily on the parameter $\beta$. The easiest case is when $\beta$ satisfies a so-called non-resonance condition. Then it was shown in \cite{GKZ1} that the solution complex of the corresponding GKZ-system is isomorphic to a direct image of a local system, defined on the complement of
the graph of an associated family of Laurent polynomials, under the projection to the parameter space.
In another direction, Adolphson and Sperber \cite{AdoSperberpubl} showed that a non-resonant, homogeneous GKZ-system is isomorphic to a direct factor of a Gau\ss-Manin system of a family of affine varieties constructed from a related family of Laurent polynomials.

However, the GKZ-systems which appear in mirror symmetry are usually resonant. It was found by Batyrev \cite{Bat4} that the periods of some families of Calabi-Yau hypersurfaces in toric varieties are among the solutions of these GKZ-systems. He achieved this by studying the variation of mixed Hodge structures of the complement of the corresponding affine varieties inside the dense torus of the toric variety. The results of Batyrev were refined by Stienstra in \cite{Sti}, where he proved that the relative cohomology bundle of this affine family is actually isomorphic to a GKZ-system outside its singular locus. This result of Stienstra endows the smooth part of a GKZ-system with a (geometric) variation of mixed Hodge structures.

The proof of \cite{GKZ1} consists, among other things, in showing that the total Fourier-Laplace transformation of a non-resonant GKZ-system, which has support on an affine (generalized) toric variety, is isomorphic to the middle extension of a rank one bundle which is defined on the dense torus of this variety. This was generalized by Schulze and Walther in \cite{SchulWalth2}. Using the theory of Euler-Koszul complexes, which was developed in \cite{MillerWaltherMat}, they identified a set of so-called non-strongly-resonant parameters (including the non-resonant ones), for which the total Fourier-Laplace transformation of the corresponding GKZ-systems is isomorphic to the direct image of the rank one bundle under the torus embedding. 
This result turns out to be crucial for our study, as we prove that such GKZ-systems with integer parameter (which are resonant by definition) carry a mixed Hodge module structure, which extends the variation of mixed Hodge structures on the smooth part found by Batyrev and Stienstra. 

In order to prove this, we use a comparison theorem of d'Agnolo and Eastwood \cite{AE} between the total Fourier-Laplace transformation and various so-called Radon transformations to show that the GKZ-system can be written as a certain Radon transformation of a mixed Hodge module.  But this endows the GKZ-system with the structure of a mixed Hodge module, because the Radon transformations (unlike the total Fourier Laplace transformation) preserve the category of mixed Hodge modules.

The benefit of this approach is that we can show that the GKZ-system endowed with this Hodge structure sits inside an exact $4$-term sequence of mixed Hodge modules where the two outer terms are constant variations of mixed Hodge structures and the second term is isomorphic to the Gau\ss-Manin system of an associated family of Laurent polynomials. This establish a tight relationship between this Gau\ss-Manin system and the GKZ-system which turns out to be very useful in computing various Landau-Ginzburg models in mirror symmetry (cf. \cite{ReiSe} and \cite{ReiSe2}). 

Let us give a short overview of the paper: In the first section we review the definition of GKZ-systems and the theorem of Schulze and Walther which expresses the GKZ-system as a total Fourier-Laplace transformation of a direct image of a rank one bundle, when the parameter $\beta$ is not strongly resonant. We first prove some basic facts on the geometry of the set of strongly resonant parameters $sRes(A)$ for general matrices $A$ and also in the easier case when the associated semi-group $\mbn A$ is saturated. By using a result of Walther \cite{Walther1} on the holonomic dual of a GKZ-system and results of Saito \cite{SaitoMut1} on their classification, we are able to prove a result which is in some sense dual to the one of Schulze and Walther, namely we identify the parameters for which the total Fourier-Laplace transformation of a GKZ-system is isomorphic to the \textbf{proper} direct image of the rank one bundle under the torus embedding.

In the second section a tight relation between certain (direct sums of) GKZ-systems and Gau\ss-Manin systems of associated families of Laurent polynomials is established (Theorem \ref{thm:genseq} and Corollary \ref{cor:exseqA}). More precisely, we show that there exists a morphism between the Gau\ss-Manin system and the GKZ-system with an $\mco$-free kernel and cokernel. Using a description of the Gau\ss-Manin system by relative differential forms , we explicitly compute this morphism, in the case when the semigroup $\mbn A$ is saturated, which gives interesting insight into the structure of these Gau\ss-Manin systems. Finally, when the matrix $A$ satisfies some extra homogenity condtion, we restrict these systems to a  hyperplane and recover the above-mentioned result of \cite[Theorem 8]{Sti}. 

In the last section we show that the results in the second section carry over into the category of mixed Hodge modules (Proposition \ref{prop:MHMgenseq} and Proposition \ref{prop:MHMexseqA}). This enables us to prove that a homogeneous GKZ-system with a non strongly-resonant, integer parameter vector $\beta$ carries a mixed Hodge module structure. If the parameter vector $\beta$ is rational we show that the GKZ-system is a direct summand in a mixed Hodge module, which shows that the underlying perverse sheaf has quasi-unipotent local monodromy. 

\textbf{Acknowledgements:}  I would like to thank Hiroshi Iritani, Etienne Mann, Thierry Mignon and Christian Sevenheck for useful discussions. Furthermore, I thank Claus Hertling and Claude Sabbah for their continuous support and interest in my work. I am indebted to Mutsumi Saito  who pointed out to me that the proof of Proposition \ref{prop:gkzuniquemorph} follows directly from his work \cite{SaitoMut3}. I am also grateful to Uli Walther who answered me some questions related to his joint work \cite{SchulWalth2} with Mathias Schulze.\\

\section{GKZ-systems}

In this section we will prove various facts about GKZ-systems. An important ingredient, which we will need in the next section, is
a theorem of Schulze and Walther \cite{SchulWalth2} which expresses the Fourier-Laplace transformation of a GKZ-system as a direct image of a rank one bundle under a torus-embedding, when the parameter vector $\beta$ is not in the set of so-called strongly resonant values. We will prove some basic facts about this set which will be essential in the following. By combing results of Walther \cite{Walther1} on the holonomic dual of a GKZ-system for generic parameter $\beta$ and of Saito \cite{SaitoMut1} on isomorphism classes of GKZ-systems, we are able to prove in Proposition \ref{prop:dualSW} the dual statement of the theorem of Schulze and Walther.

Let $X$ be a smooth algebraic variety over $\mbc$ of dimension $n$.
We denote by $M(\mcd_X)$ the abelian category of algebraic $\mcd_X$-modules on $X$ and the abelian subcategory of (regular) holonomic $\mcd_X$-modules by $M_h(\mcd_X)$ (resp. $(M_{rh}(\mcd_X))$.  The full triangulated subcategory in $D^b(\mcd_X)$, consisting of objects with (regular) holonomic cohomology, is denoted by $D^b_{h}(\mcd_X)$ (resp. $D^b_{rh}(\mcd_X)$).

\noindent Let $f: X \ra Y$ be a map between smooth algebraic varieties. Let $M \in D^b(\mcd_X)$ and $N \in D^b(\mcd_Y)$, then we denote by $f_+ M := Rf_*(\mcd_{Y \leftarrow X} \overset{L}{\otimes} M)$ resp. $f^+ M:= \mcd_{X \ra Y} \overset{L}{\otimes} f^{-1}M$ the direct resp. inverse image for $\mcd$-modules.
Recall that the functors $f_+,f^+$ preserve (regular) holonomicity (see e.g., \cite[Theorem 3.2.3]{Hotta}).
\noindent We denote by $\mbd: D^b_h(\mcd_X) \ra (D^b_h(\mcd_X))^{opp}$ the holonomic duality functor.
Recall that for a single holonomic $\mcd_X$-module $M$, the holonomic dual is also a single holonomic $\mcd_X$-module (\cite[Proposition 3.2.1]{Hotta}) and that holonomic duality preserves regular holonomicity ( \cite[Theorem 6.1.10]{Hotta}).

\noindent For a morphism $f: X \ra Y$ between smooth algebraic varieties we additionally define the functors $f_\dag := \mbd \circ f_+ \circ \mbd$ and $f^\dag := \mbd \circ f^+ \circ \mbd$.

\begin{defn}[\cite{GKZ1}, \cite{Adolphson}]
\label{def:GKZ}
Consider a lattice $\dZ^{d}$ and vectors $\underline{a}_1,\ldots,\underline{a}_n\in\dZ^{d}$ which we also write as
a matrix $A=(\underline{a}_1,\ldots,\underline{a}_n)$. In the following we assume that the vectors $\underline{a}_1, \ldots , \underline{a}_n$ generate $\mbz^d$ as a $\mbz$-module. Moreover, let $\beta=(\beta_1,\ldots,\beta_d)\in\dC^{d}$.
Write $\dL$ for the $\mbz$-module of integer relations among the columns of $A$ and
$\cD_{\dC^{n}}$ for the sheaf of rings of algebraic differential operators on $\dC^{n}$ (where
we choose $\lambda_1,\ldots,\lambda_n$ as coordinates).
Define
$$
\cM^\beta_A:=\cD_{\dC^n}/\left((\Box_{\underline{l}})_{\underline{l}\in\dL}+(E_k - \beta_k)_{k=1,\ldots d}\right),
$$
where
$$
\begin{array}{rcl}
\Box_{\underline{l}} & := & \prod_{i:l_i<0} \partial_{\lambda_i}^{-l_i} -\prod_{i:l_i>0} \partial_{\lambda_i}^{l_i}\, , \\ \\
E_k & := & \sum_{i=1}^n a_{ki} \lambda_i \partial_{\lambda_i}\, .
\end{array}
$$
$\cM^\beta_A$ is called a GKZ-system. It is a holonomic $\mcd$-module by \cite[Theorem 3.9]{Adolphson}. 
\end{defn}

\noindent As GKZ-systems are defined on the affine space $\mbc^{n}$, we will often work with the $D$-modules of global sections $M^\beta_A := \Gamma(\mbc^{n} , \mcm^\beta_A)$ rather than with the sheaves themselves. Here $D$ denotes the Weyl algebra $\mbc[\lambda_1, \ldots , \lambda_n]\langle \p_{\lambda_1}, \ldots , \p_{\lambda_n} \rangle$.

\begin{ex}[I]
The easiest examples of GKZ-systems are the ones associated to the matrix $A = (1)$. These are simply the $\mcd$-modules
\[
\mcd_\mbc / \mcd_\mbc(\lambda \p - \beta)\, .
\]
If we assume $\beta \in \mbz$, then it is easy to compute, that the left-action of $\p_\lambda$ is invertible if and only if $\beta \geq 0$. The generalization of this property will be shown in Theorem \ref{thm:SW} and Lemma \ref{lem:sResnormal}. Furthermore the holonomic dual of $\mcd_\mbc / \mcd_\mbc(\lambda \p - \beta)$ is $\mcd_\mbc / \mcd_\mbc(\lambda \p + \beta+1)$, the corresponding statement for general GKZ-systems is given in Proposition \ref{prop:dualGKZ}. 
\end{ex}

\noindent Denote by
$$
\mbn A := \{ \sum_{i=1}^d \gamma_i \underline{a}_i \in \mbz^d \mid (\gamma_i)_{i = 1, \ldots ,d} \in \mbn^d\}
$$
the semigroup built by the columns of $A$ seen as elements in $\mbz^d$ and similarly for $\mbz A$ and $\mbr_+ A$, where $\mbr_+$ are the real numbers bigger or equal than zero.

\noindent The semigroup ring associated to the matrix $A$ is $S_A := \mbc[\mbn A] \simeq R/I_A$, where $R$ is the commutative ring $\mbc[\p_{\lambda_1}, \ldots, \p_{\lambda_n}]$, $I_A$ is the ideal
\[
I_A = \{ \Box_{\underline{l}}, l \in \mbl\}
\]
and the isomorphism follows from \cite[Theorem 7.3]{MillSturm}.
The rings $R$ and $S_A$ are naturally $\mbz^{d}$-graded if we define $\deg(\p_{\lambda_j}) = \underline{a}_j$ for $j=1, \ldots , n$. This is compatible with the $\mbz^{d}$-grading of the Weyl algebra $D$ given by $\deg(\lambda_j) = - \underline{a}_j$ and $\deg(\p_{\lambda_j}) = \underline{a}_j$.

\begin{ex}[II]
Consider the matrix
\[
A = \left(\begin{matrix}3 & 2 & 0 \\ 1 & 1 & 1 \end{matrix} \right)
\]
A simple Gr\"{o}bner basis computation shows that the ideal $I_A$ is generated by the single box-operator $\Box_{(3,-5,2)}$, i.e we have
\[
S_A \simeq \mbc[\p_1,\p_2,\p_3]/(\p_2^5-\p_1^3\cdot \p_3^2).
\]
The $\mbz^2$-degrees of $S_A$ can be read off from the following diagram:
\begin{center}
\begin{tikzpicture}
 \foreach \x in {-1,0,...,9}{
      \foreach \y in {-1,0,...,5}{ 
        \node[draw,circle,inner sep=1pt,fill] at (0.75*\x,0.75*\y) {};
      }
    }

 \filldraw[black,opacity=.3] (0,0) -- (0.75*9.2,0.75*3.05) -- (0.75*9.2,0.75*5.2) -- (0,0.75*5.2) -- (0,0); 
 
 \foreach \y in {3,4,...,5}{
   \foreach \x in{0,1,...,9}{
    \node[draw,circle,inner sep=2pt,fill] at (0.75*\x,0.75*\y) {};
   }
  }
\foreach \x in {0,1,...,6}{
    \node[draw,circle,inner sep=2pt,fill] at (0.75*\x,0.75*2) {};
  }
\foreach \x in {0,1,...,3}{
    \node[draw,circle,inner sep=2pt,fill] at (0.75*\x,0.75*1) {};
  }
\foreach \y in {1,2,...,5}{
    \node[draw,circle,inner sep=1.5pt,fill,white] at (0.75*1,0.75*\y) {};
  }
\node[draw,circle,inner sep=2pt,fill] at (0.75*0,0.75*0) {};
%captions
\filldraw[black,opacity=.3] (-0.5,-1.2) -- (0.1,-1.2) -- (0.1,-1.5) -- (-0.5,-1.5) -- (-0.5,-1.2); 
\node[] at (0.7,-1.41) {$\mbr_+ A$};
\node[draw,circle,inner sep=2pt,fill] at (2.0,-1.35) {};
\node[] at (2.4,-1.37) {$\in$};
\node[] at (2.9,-1.35) {$\mbn A$};
\node[draw,circle,inner sep=2pt,fill] at (4.1,-1.35) {};
\node[draw,circle,inner sep=1.5pt,fill,white] at (4.1,-1.35) {};
\node[] at (4.5,-1.37) {$\in$};
\node[] at (6.2,-1.35) {$(\mbr_+ A \cap \mbz^2) \setminus \mbn A$};
\end{tikzpicture}

\end{center}
Notice that $\mbn A = \mbr_+A \cap \mbz^2$ (condition \eqref{eq:defsaturated}) is not satisfied.
\end{ex}
\noindent If we substitute $\p_{\lambda_i} \mapsto \mu_i$, the ideal $I_{A}$ goes over to an ideal $\hat{I}_{A}\subset \mbc[\mu_1, \ldots , \mu_n]$ and we have clearly the following isomorphism
\[
\mbc[\mbn A] \simeq \mbc[\mu_1, \ldots , \mu_n] / \hat{I}_{A}\, .
\]
Furthermore, we have a ring homomorphism
\begin{align}
\mbc[\mu_1, \ldots , \mu_n] / \hat{I}_{A} &\lra \mbc[y_1^\pm, \ldots , y_d^\pm]\, , \notag \\
\mu_i &\mapsto \underline{y}^{\underline{a}_i}:= \prod_{k=1}^d y_k^{a_{ki}}\, . \notag
\end{align}
If we define $Y':= \Spec(\mbc[\mbn A])$ and $T' := \Spec(\mbc[y_1^\pm, \ldots , y_d^\pm])$, the ring homomorphism above gives rise to the following map
\begin{equation}\label{eq:bigtorusembedding}
h_A : T' \lra \mbc^n\, ,
\end{equation}
where $Y'$ is the closure of the image of $h_A$.

Let $W$ be a finite-dimensional complex vector space and $W'$ its dual vector space with coordinates $\underline{\lambda} =(\lambda_1, \ldots ,\lambda_n)$ resp. $\underline{\mu} =(\mu_1, \ldots , \mu_n)$. Let $\langle \cdot, \cdot \rangle$ be the standard euclidean pairing with respect to these coordinates.

\begin{defn}
Define $\mcl := \mco_{W' \times W} e^{-\langle \underline{\mu}, \underline{\lambda} \rangle}$ and denote by  $p_1: W' \times W \ra W'$, $p_2 : W' \times W \ra W$ the canonical projections. The Fourier-Laplace transformation is then defined by
\[
\FL(M) := p_{2 +}(p_1^+ M \overset{L}{\otimes} \mcl) \quad M \in D^b_h(\mcd_{W'}).
\]
\end{defn}
In general, the Fourier-Laplace transformation does not preserve regular holonomicity. But for the derived category of complexes of $\mcd$-modules with so-called monodromic cohomology, regular holonomicity is preserved. Let $\theta: \mbc^\ast \times W' \ra W'$ be the natural $\mbc^\ast$ action on $W'$ and let $z$ be a coordinate on $\mbc^\ast$. We denote the push-forward $\theta_*(z \partial_z)$ as the Euler vector field $\mathfrak{E}$.\\

\begin{defn}\cite{Brylinski}
A regular holonomic $\mcd_{W'}$-module $M$ is called monodromoic, if the Euler field $\mathfrak{E}$ acts finitely on the global sections of $M$, i.e. for a global section section $v$ of $M$ the set $\mathfrak{E}^n(v)$, $(n \in \mbn)$, generates a finite-dimensional vector space.
We denote by $D^b_{mon}(\mcd_{W'})$ the derived category of bounded complexes of $\mcd_{W'}$-modules with regular holonomic and monodromic cohomology. 
\end{defn}

\begin{thm}\cite{Brylinski}
\begin{enumerate}
\item $\FL$ preserves complexes with monodromic cohomology.
\item In $D^b_{mon}(\mcd_{W'})$ we have
\[
\FL \circ \FL \simeq Id \quad \text{and} \quad \mbd \circ \FL \simeq \FL \circ \mbd \, .
\]
\item $\FL$ is $t$-exact with respect to the natural $t$-structure on $D^b_{mon}(\mcd_{W'})$ resp. $D^b_{mon}(\mcd_{W})$.
\end{enumerate}
\end{thm}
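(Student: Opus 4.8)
The plan is to trade the geometric definition of $\FL$ for an explicit algebraic one on the level of the Weyl algebra, and to concentrate all the real work into a single degeneration statement for a relative de Rham complex.

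First I would factor the total transform as a composition $\FL=\FL_n\circ\cdots\circ\FL_1$ of partial Fourier--Laplace transforms, where $\FL_i$ involves only the pair of coordinates $(\lambda_i,\mu_i)$ and the kernel $\mco\,e^{-\mu_i\lambda_i}$. Each $\FL_i$ is $p_{2+}\bigl(p_1^+(-)\otimes\mcl_i\bigr)$ over $\dA^1_{\lambda_i}\times\dA^1_{\mu_i}\times(\text{rest})$, where $p_1$ is smooth (so $p_1^+$ is exact and faithful) and $\mcl_i$ is a rank-one integrable connection (so $\otimes\mcl_i$ is an exact autoequivalence). Thus the only nonformal ingredient is the direct image along the affine line $\dA^1_{\lambda_i}$, i.e. the two-term relative de Rham complex $\bigl[\,\cdot\ \xrightarrow{\ \partial_{\lambda_i}-\mu_i\ }\ \cdot\,\bigr]$ of $p_1^+M\otimes\mcl_i$. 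The key lemma I would prove is: if $M$ is regular holonomic and $\lambda_i\partial_{\lambda_i}$ acts finitely on its global sections, then this complex has cohomology concentrated in a single degree, and after passing to global sections $\FL_i$ is nothing but the automorphism $\cF_i$ of the Weyl algebra sending $\lambda_i\mapsto -\partial_{\mu_i}$, $\partial_{\lambda_i}\mapsto \mu_i$ (and fixing the other variables) applied to $M$; in particular $\FL_i M$ is again regular holonomic and $\mu_i\partial_{\mu_i}$ acts finitely on it. I would prove this by decomposing $M$ along the generalized eigenspaces of $\lambda_i\partial_{\lambda_i}$ and checking the assertion on each summand by a direct computation, the integer/non-integer eigenvalue dichotomy being handled exactly as in Example~(I). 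This is the technical heart of the whole statement.

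Granting the lemma, the three assertions follow formally. Induction on the coordinates gives (1): $\FL$ preserves regular holonomicity, and since $\FL_i$ carries $\lambda_i\partial_{\lambda_i}$ to $-\mu_i\partial_{\mu_i}-1$, it carries the Euler field $\mathfrak{E}$ to $-\mathfrak{E}-n$, so finiteness of the $\mathfrak{E}$-action --- i.e. monodromicity --- is preserved as well. For (2), once $\FL$ is computed by the Weyl-algebra automorphism $\cF=\cF_n\circ\cdots\circ\cF_1$, the composite $\cF\circ\cF$ is the pullback along $-\id_{W'}$, which is naturally isomorphic to the identity on the monodromic category (the $\dC^\ast$-action underlying monodromicity identifies $(-\id_{W'})^+M$ with $M$); and $\cF$ intertwines, up to signs, the transposition anti-automorphism used to build $\mbd$, whence $\mbd\circ\FL\simeq\FL\circ\mbd$. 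Finally (3): $\FL$ is the composite of the $t$-exact functors $p_1^+$ and $\otimes\mcl$ with the right $t$-exact functor $p_{2+}$ (direct image along a smooth affine morphism), hence is right $t$-exact; applying $\mbd$, which is $t$-exact on $D^b_h(\mcd_{W'})$ because the holonomic dual of a single holonomic module is a single holonomic module, and using $\mbd\circ\FL\circ\mbd\simeq\FL$ from (2), one gets that $\FL$ is also left $t$-exact, hence $t$-exact.

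The step I expect to be genuinely delicate is the degeneration of the relative de Rham complex of $p_1^+M\otimes\mcl_i$: one must simultaneously rule out higher cohomology (surjectivity of $\partial_{\lambda_i}-\mu_i$ would fail for irregular or otherwise pathological local monodromy) and the kernel of $\partial_{\lambda_i}-\mu_i$ outside the expected degree, and it is precisely here that finiteness of the $\lambda_i\partial_{\lambda_i}$-action is indispensable. A more conceptual alternative would be to pass through the Riemann--Hilbert correspondence for regular holonomic monodromic modules and deduce all three statements from the classical properties of the topological Fourier--Sato transform (which is manifestly involutive up to antipode, $t$-exact, and compatible with Verdier duality); but this only relocates the same analytic input into the comparison between $\FL$ and the Fourier--Sato transform.
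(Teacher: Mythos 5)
The paper itself does not reprove any of this: its ``proof'' consists in transferring Brylinski's Corollaire~6.12, Proposition~6.13 and Corollaire~7.23 from the constructible side to the $\mcd$-module side via the Riemann--Hilbert correspondence --- i.e.\ exactly the ``more conceptual alternative'' you relegate to your last sentence. Your direct argument, by contrast, has a genuine gap in its inductive structure. Your key lemma for the partial transform $\FL_i$ is hypothesised on $\lambda_i\p_{\lambda_i}$ acting finitely on global sections, and you propose to prove it by decomposing $M$ into generalized eigenspaces of $\lambda_i\p_{\lambda_i}$. But monodromicity in the sense of the paper is finiteness of the action of the \emph{total} Euler field $\mathfrak{E}=\sum_i\lambda_i\p_{\lambda_i}$, and this does not imply finiteness of each summand separately: for $M=j_+\mco_U$ with $U=\mbc^2\setminus\{\lambda_1=\lambda_2\}$, the section $(\lambda_1-\lambda_2)^{-1}$ is an eigenvector of $\mathfrak{E}$, yet its iterates under $\lambda_1\p_{\lambda_1}$ have unbounded pole order and span an infinite-dimensional space. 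So after applying $\FL_1$ you cannot invoke the lemma for $\FL_2$, and the induction that is supposed to show $\FL$ preserves \emph{regular} holonomicity (which membership in $D^b_{mon}$ requires) does not close. This is precisely the nontrivial content of statement (1): the Fourier transform of a regular holonomic module is irregular in general (e.g.\ $\FL(\delta_1)\simeq\mco e^{-\mu}$), and only the global $\mbc^*$-homogeneity saves regularity at infinity.

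You have also located the technical difficulty in the wrong place. The degeneration of the relative de Rham complex needs no hypothesis at all: on $M\otimes_\mbc\mbc[\mu_1,\ldots,\mu_n]$ the commuting operators $\p_{\lambda_i}-\mu_i$ form a Koszul-regular sequence (look at the top degree in $\mu_i$ to see injectivity), with cokernel canonically $M$ equipped with the twisted Weyl-algebra action, for an \emph{arbitrary} $\mcd$-module on affine space. Hence $\FL$ is exact on all of $M(\mcd_{W'})$ and (3) is immediate; likewise $\FL\circ\FL\simeq(-\id)^+$ and $\mbd\circ\FL\simeq(-\id)^+\circ\FL\circ\mbd$ hold unconditionally, and monodromicity enters only to trivialize $(-\id)^+$ by parallel transport along the $\mbc^*$-orbits (that part of your (2) is fine, since $-\id$ descends to the identity on $\mbp(W')$). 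The one assertion that genuinely requires work is preservation of regular holonomicity, and for that you must either argue on the total transform directly or pass through the Fourier--Sato comparison, as Brylinski --- and hence the paper --- does.
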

\begin{proof}
The above statements are stated in \cite{Brylinski} for constructible monodromic complexes. One has to use the Riemann-Hilbert correspondence, \cite[Proposition 7.12, Theorem 7.24]{Brylinski} to translate the statements. So the first statement is Corollaire 6.12, the second statement is Proposition 6.13 and the third is Corollaire 7.23 in \cite{Brylinski}.
\end{proof}

\begin{rem}\label{rem:mon-dmod}
By Proposition 7.12 of \cite{Brylinski} a complex in $D^b_{mon}(\mcd_{W'})$ corresponds to a complex of sheaves of $\mbc$-vector spaces with constructible, monodromic cohomology, where we call a sheaf of $\mbc$-vector spaces monodromic if it is locally constant along the orbits of the $\mbc^*$-action on the fibers of $W'$. Let $\pi_{W'}: W' \setminus 0 \ra \mbp(W')$ be the projectivization of $W'$. This characterization shows that a complex $M \in D^b_{mon}(\mcd_{W'})$ is monodromic if its restriction to the complement of zero is isomorphic to $\pi^+_{W'} N$ for some $N \in D^b_{rh}(\mcd_{\mbp(W')})$.
\end{rem}

\noindent Because of the discussion above the hypergeometric system $\mcm^\beta_{A}$ can be seen as the total Fourier-Laplace transform of a $\mcd$-module which has support on $Y'$. In \cite{SchulWalth2} it it shown that
$\textup{FL}(\mcm^\beta_{A})$ is isomorphic to $h_{A,+} \mco_{T'}\underline{y}^\beta$ for certain $\beta \in \mbc^d$, where we denote by $\mco_{T'}\underline{y}^\beta$ the $\mcd$-module
$$
\mcd_{T'}/\mcd_{T'} \cdot (y_1 \p_{y_1} - \beta_1, \ldots , y_d \p_{y_d} - \beta_d)\, .
$$

Notice that GKZ-systems are not monodromic in general. However, in Lemma \ref{lem:GKZmono} we will give a sufficient condition for the matrix $A$ in order that the corresponding GKZ-system is monodromic.

\begin{defn}[\cite{MillerWaltherMat} Definition 5.2]
Let $N$ be a finitely generated $\mbz^{d}$-graded $R$-module. An element
$\alpha \in \mbz^{d}$ is called a true degree of $N$ if $N_\alpha$ is non-zero. A vector $\alpha \in \mbc^{d}$ is called a quasi-degree of $N$, written $\alpha \in qdeg(N)$, if
$\alpha$ lies in the complex Zariski closure $qdeg(N)$ of the true degrees of $N$ via the natural embedding $\mbz^{d} \hookrightarrow \mbc^{d}$.  
\end{defn} 

\noindent Schulze and Walther now define the following set of parameters:
\begin{defn}[\cite{SchulWalth2}] 
The set
\[
sRes(A) := \bigcup_{j=1}^n sRes_j(A)
\]
is called the set of strongly resonant parameters of $A$, where
\[
sRes_j(A) := \{ \beta \in \mbc^{d} \mid \beta \in -(\mbn +1)\underline{a}_j - qdeg(S_A / \langle \p_{\lambda_j} \rangle)\} .
\]
\end{defn}
Notice that we use the convention $deg(\p_j) = \underline{a}_j$ instead of $deg(\p_j) = -\underline{a}_j $ as in \cite{SchulWalth2}, because this leads to simpler formulas.
\begin{rem}
The set $sRes(A)$ is a proper subset of the set of resonant parameters, which was introduced in \cite{GKZ1} and which is defined by
\[
Res(A) := \bigcup_{F \; \text{face of}\; A} \mbz^d + \mbc F \; ,
\]
where a face $F$ of $A$ is a set of columns of $A$ minimizing some linear functional on $\mbn A \subset \mbz^{d}$ and $\mbc F$ is the $\mbc$-linear span of $F$.
\end{rem}
\noindent The condition on $\beta$ for $\FL(\mcm_A^\beta)$ being isomorphic to $h_{A,+} \mco_{T'}\underline{y}^\beta$ is now the following:
\begin{thm}[\cite{SchulWalth2}\label{thm:SW} Theorem 3.6, Corollary 3.7]\label{thm:GKZisoSW} Let $\mbn A$
be a positive semigroup, meaning that $0$ is the only unit in $\mbn A$.
Then the following is equivalent
\begin{enumerate}
\item $\beta \notin sRes(A)$.
\item $\mcm^\beta_A \simeq \FL ((l \circ k)_+ \, \mco_{T'}\underline{y}^\beta)$.
\item Left multiplication with $\p_{\lambda_i}$ is invertible on $M^{\beta}_{A}$ for $i= 1, \ldots ,n$.
\end{enumerate}
\end{thm}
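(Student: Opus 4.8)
The plan is to run everything through the Euler--Koszul homology of \cite{MillerWaltherMat} together with the Fourier--Laplace dictionary. Recall that $M^\beta_A$ is the zeroth homology $\cH_0(E_\bullet-\beta;S_A)$ of the Euler--Koszul complex of the toric ring $S_A=R/I_A$, where $R=\mbc[\p_{\lambda_1},\ldots,\p_{\lambda_n}]$ carries the $\mbz^d$-grading $\deg\p_{\lambda_j}=\underline a_j$: the box operators contribute the toric relations $I_A$ and the operators $E_k-\beta_k$ supply the grading datum. Under $\FL$, which is an exact auto-equivalence of $D^b_h(\mcd_{\mbc^n})$ (with $\FL\circ\FL\simeq\id$ up to a harmless sign automorphism $\mu\mapsto-\mu$), left multiplication by $\p_{\lambda_j}$ becomes multiplication by $\mu_j$, each $\Box_{\underline l}$ becomes up to sign the binomial $\prod_{i:l_i<0}\mu_i^{-l_i}-\prod_{i:l_i>0}\mu_i^{l_i}\in\hat I_A$, and the $E_k-\beta_k$ become first-order operators presenting $\mco_{T'}\underline y^\beta$ along the torus. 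Hence $\FL(\mcm^\beta_A)$ is a holonomic $\mcd$-module supported on $Y'$ with $\FL(\mcm^\beta_A)|_{T'}\simeq\mco_{T'}\underline y^\beta$. The positivity of $\mbn A$ is used to make the $\mbz^d$-grading on $S_A$ ``positive'' (finite-dimensional graded pieces, well-behaved quasi-degrees) and to keep $Y'$ a genuine affine cone.

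\emph{$(1)\Leftrightarrow(3)$.} Left multiplication by $\p_{\lambda_j}$ is bijective on $M^\beta_A$ iff the localization map $M^\beta_A\to(M^\beta_A)[\p_{\lambda_j}^{-1}]$ is an isomorphism. As $\cH_0(E_\bullet-\beta;-)$ commutes with this localization, the target is $\cH_0(E_\bullet-\beta;(S_A)[\p_{\lambda_j}^{-1}])$, and feeding the short exact sequence $0\to S_A\to(S_A)[\p_{\lambda_j}^{-1}]\to Q_j\to 0$, with $Q_j:=(S_A)[\p_{\lambda_j}^{-1}]/S_A$, into the Euler--Koszul long exact sequence shows that the cokernel of this map equals $\cH_0(E_\bullet-\beta;Q_j)$ and its kernel is a quotient of $\cH_1(E_\bullet-\beta;Q_j)$; both vanish iff $\cH_\bullet(E_\bullet-\beta;Q_j)=0$. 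Since $S_A$ is a domain, $Q_j$ is the increasing union of the finitely generated toric modules $(S_A/\p_{\lambda_j}^mS_A)(m\underline a_j)$, $m\ge1$, so that $qdeg(Q_j)=qdeg(S_A/\langle\p_{\lambda_j}\rangle)-(\mbn+1)\underline a_j$; by the quasi-degree vanishing criterion of \cite{MillerWaltherMat} for toric modules, $\cH_\bullet(E_\bullet-\beta;Q_j)=0$ iff $-\beta\notin qdeg(Q_j)$, which (after matching the two grading conventions --- the excerpt has $\deg\p_{\lambda_j}=\underline a_j$, opposite to \cite{SchulWalth2}) is exactly $\beta\notin sRes_j(A)$. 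Taking the union over $j$ gives the equivalence.

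\emph{$(3)\Rightarrow(2)$ and $(2)\Rightarrow(3)$.} If all $\p_{\lambda_i}$ act invertibly on $M^\beta_A$, then all $\mu_i$ act invertibly on $\FL(\mcm^\beta_A)$, which combined with $\mathrm{Supp}\,\FL(\mcm^\beta_A)\subseteq Y'$ forces the support into $Y'\cap(\mbc^\ast)^n=T'$, a smooth closed subvariety of the open torus $(\mbc^\ast)^n\subseteq\mbc^n$. Kashiwara's equivalence for $T'\hookrightarrow(\mbc^\ast)^n$ together with the localization identifies $\FL(\mcm^\beta_A)$ with $h_{A,+}$ of the $\mcd_{T'}$-module $\FL(\mcm^\beta_A)|_{T'}=\mco_{T'}\underline y^\beta$, i.e.\ $\FL(\mcm^\beta_A)\simeq(l\circ k)_+\mco_{T'}\underline y^\beta$; applying $\FL$ yields (2). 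Conversely, if $\mcm^\beta_A\simeq\FL((l\circ k)_+\mco_{T'}\underline y^\beta)$, then by definition of $\FL$ the $\p_{\lambda_i}$-action on $\mcm^\beta_A$ corresponds to the $\mu_i$-action on $(l\circ k)_+\mco_{T'}\underline y^\beta$, which is invertible --- inverting $\mu_i$ is built into the open direct image $k_+$ (poles along $Y'\cap\{\mu_i=0\}$ are allowed), and the closed direct image $l_+$ only restricts the coordinates --- so $\p_{\lambda_i}$ acts invertibly on $M^\beta_A$, giving (3).

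The main obstacle is the part of $(1)\Leftrightarrow(3)$ asserting that $\beta\notin sRes_j(A)$ also forces $\p_{\lambda_j}$ to be \emph{injective} on $M^\beta_A$: one must show that the higher Euler--Koszul homology $\cH_1(E_\bullet-\beta;Q_j)$ (and the connecting map to $M^\beta_A$) is controlled by precisely $qdeg(S_A/\langle\p_{\lambda_j}\rangle)$. This is where the full strength of the Euler--Koszul formalism of \cite{MillerWaltherMat} is needed --- right-exactness and long exact sequences, compatibility with grading shifts and with localization, and especially the theorem that the \emph{entire} Euler--Koszul homology of a toric module $N$ vanishes exactly when $-\beta\notin qdeg(N)$ --- and where the positivity of $\mbn A$ keeps all gradings and quasi-degrees finite and under control.
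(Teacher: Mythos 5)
The paper itself contains no proof of this statement: it is imported verbatim from Schulze--Walther \cite{SchulWalth2} (Theorem 3.6 and Corollary 3.7), so your attempt can only be measured against the cited source. Your reconstruction does follow the same route as that source: realize $M^\beta_A$ as $\cH_0$ of the Euler--Koszul complex of $S_A$, reduce invertibility of $\p_{\lambda_j}$ to vanishing of the Euler--Koszul homology of the localization quotient, invoke the quasi-degree vanishing criterion of \cite{MillerWaltherMat}, and then pass through $\FL$, the support condition and Kashiwara's equivalence to get the direct-image description. In outline this is the right proof.

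One step fails as literally written. You claim $qdeg(Q_j)=qdeg(S_A/\langle\p_{\lambda_j}\rangle)-(\mbn+1)\underline a_j$ and then apply the vanishing criterion to $Q_j$ itself. But $qdeg$ is by definition a Zariski \emph{closure}, and the true degrees of $Q_j$ form an infinite union of translates of $qdeg(S_A/\langle\p_{\lambda_j}\rangle)$ in the direction $-\underline a_j$; a polynomial vanishing on infinitely many parallel translates $\mbc F_k+\alpha_k-l\underline a_j$ ($l\ge 1$) vanishes on all of $\mbc F_k+\mbc\underline a_j+\alpha_k$, so $qdeg(Q_j)$ actually sweeps out the whole $\mbc\underline a_j$-direction and is strictly larger than $-sRes_j(A)$, which is not Zariski closed. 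Applying the criterion to $Q_j$ wholesale would therefore produce too large a resonance set. The correct argument (and the one in \cite{SchulWalth2}) applies the vanishing theorem only to the finitely generated toric pieces $\p_{\lambda_j}^{-m}S_A/S_A$ (equivalently, to $\mathrm{coker}(\p_{\lambda_j}^m)\simeq\cH_0$ of a shifted $S_A/\p_{\lambda_j}^mS_A$ via right-exactness), for each $m$ separately, and then handles the colimit; this is exactly the point your sketch elides. A second, milder gap is the identification $\FL(\mcm^\beta_A)|_{T'}\simeq\mco_{T'}\underline y^\beta$ with the parameter coming out to be $\beta$ on the nose: the Fourier transform of the Euler operators and the closed torus embedding each introduce a shift by $\varepsilon_A=\sum_i\underline a_i$ (plus signs), and checking that these cancel against the chosen normalization of $sRes(A)$ is precisely the bookkeeping that makes the statement true with $\underline y^\beta$ rather than $\underline y^{-\beta-\varepsilon_A}$; your sketch asserts it without verification.
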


\noindent We now want to characterize the set $sRes(A)$. For this we have to understand the geometry of the sets $qdeg(S_A/ \langle \p_{\lambda_j} \rangle)$ for $j = 1,\ldots, n$. We use the following definitions and notations of \cite{MillerWaltherMat}.  Let $S_F$ be the semigroup ring generated by a face $F$, then a $\mbz^{d}$-graded $R$-module $M$ is called toric if it has a toric filtration
$$
0 = M_0 \subset M_1 \subset \ldots  \subset M_{l-1} \subset M_l = M\, ,
$$
meaning that, for each $k$, $M_k/M_{k-1}$ is a $\mbz^{d}$-graded translate of $S_{F_k}$ for some face $F_k$ of $\mbn A$, generated in degree $\alpha_k$, which will be denoted by $S_{F_k}(\alpha_k)$. 

\noindent Now by example $4.7$ in \cite{MillerWaltherMat} one easily deduces that the $\mbz^{d}$-graded rings $S_A/ \langle \p_j \rangle$ are toric for $j = 1, \ldots , n$.\\

\noindent  The toric filtration
\begin{equation}\label{toricfiltj}
0 =M_0^j \subset M_1^j \subset \ldots \subset M_{l_j}^j = S_A / \langle \p_{\lambda_j} \rangle
\end{equation}
with $M^j_k/M^j_{k-1} \simeq S_{F_{k,j}}(\alpha_{k,j})$
gives us the following decomposition of the degrees 
$$
deg(S_A / \langle \p_{\lambda_j} \rangle) = \bigcup_{k=1}^{l_j} deg(S_{F_{k,j}}(\alpha_{k,j})) 
$$
resp. of the quasidegrees 
\begin{equation}\label{eq:quasideg}
qdeg(S_A / \langle \p_{\lambda_j} \rangle) = \bigcup_{k=1}^{l_j} qdeg(S_{F_{k,j}}(\alpha_{k,j})) 
\end{equation}
of $S_A/ \langle \p_{\lambda_j} \rangle$, where only faces $S_{F_{k,j}}$ occur which do not contain $\underline{a}_j$. The quasi-degrees of an $\mbz^d$-graded $R$-module $S_F$, where $F$ is a proper face of $\mbn A$, are just the $\mbc$-linear span of $F$, which we denote by $\mbc F$. (Here we use the embedding $\mbz^d \hookrightarrow \mbc^d$).
Thus the quasi-degrees of $S_A/\langle \p_{\lambda_j} \rangle$ are just finite unions of translates of the linear subspaces $\mbc F_{k,j}$.\\

The following lemma shows that a translate of the cone $\mbr_+ A$ does not meet the set of strongly resonant parameters. (Notice that the following lemma with $\delta_A = 0$ is stated in Corollary 3.8 of \cite{SchulWalth2}, but not proven. An easy counterexample of their claim is provided by $A= (2,5)$.) 

\begin{lem}\label{lem:sResnonnormal}
Denote as above by $sRes(A)$ the set of strongly resonant vectors $\beta$. 
There exists $\delta_A \in \mbn A$ such that
\[
(\mbr_+ A + \delta_A) \; \cap\; sRes(A) = \emptyset
\]
\end{lem}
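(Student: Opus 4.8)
The statement to prove is that some translate of the cone $\mbr_+ A$ avoids $sRes(A)$. Recall from \eqref{eq:quasideg} and the discussion following it that $sRes(A) = \bigcup_j sRes_j(A)$ with $sRes_j(A) = -(\mbn+1)\underline{a}_j - qdeg(S_A/\langle\p_{\lambda_j}\rangle)$, and that $qdeg(S_A/\langle\p_{\lambda_j}\rangle)$ is a \emph{finite} union of translates $\gamma + \mbc F$, where $F$ ranges over certain proper faces of $\mbn A$ (those not containing $\underline{a}_j$). So each $sRes_j(A)$ is a finite union of sets of the form $-(\mbn+1)\underline{a}_j - \gamma - \mbc F$, i.e.\ a half-line's worth of translates (indexed by $\mbn$) of an affine subspace $\mbc F$ shifted by $-\underline{a}_j$ at each step. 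The whole set $sRes(A)$ is thus a finite union of such ``comb-like'' sets.

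\textbf{Key reduction.} It suffices to find, for each pair $(j,F)$ occurring, a vector $\delta$ with $(\mbr_+ A + \delta)\cap\big(-(\mbn+1)\underline{a}_j - \gamma - \mbc F\big) = \emptyset$; then, since there are finitely many such pairs, adding up (or rather taking a suitable common translate inside $\mbn A$ — here one uses that $\mbn A$ is a semigroup, so a sum of elements of $\mbn A$ is again in $\mbn A$, and enlarging $\delta$ within $\mbr_+A$ only shrinks the intersection) yields a single $\delta_A \in \mbn A$ that works simultaneously. So the heart of the matter is one pair $(j,F)$.

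\textbf{The geometric core.} Fix a proper face $F$ of $\mbn A$ not containing $\underline{a}_j$, and let $V = \mbc F$ (a proper real/complex subspace, after identifying via $\mbz^d\hookrightarrow\mbc^d$; note the relevant points all lie in $\mbr^d$ in fact). Choose a linear functional $\ell \in (\mbr^d)^\vee$ that is $\geq 0$ on $\mbr_+ A$, vanishes exactly on $\mbr_+ F$ among the generators in $F$, and — crucially — satisfies $\ell(\underline{a}_j) > 0$ (possible precisely because $\underline{a}_j \notin F$; such a supporting functional for the face $F$ exists by definition of a face, and $\underline{a}_i$ with $\underline{a}_i\notin F$ have $\ell(\underline{a}_i)>0$). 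Now apply $\ell$: a point of $\mbr_+ A + \delta$ has $\ell$-value $\geq \ell(\delta)$, while a point of $-(\mbn+1)\underline{a}_j - \gamma - V$ has $\ell$-value $= -(m+1)\ell(\underline{a}_j) - \ell(\gamma)$ for some $m\in\mbn$, since $\ell|_V = 0$; this is $\leq -\ell(\underline{a}_j) - \ell(\gamma)$. Hence if we pick $\delta$ with $\ell(\delta) > -\ell(\underline{a}_j) - \ell(\gamma)$ — e.g.\ any $\delta \in \mbn A$, which has $\ell(\delta)\ge 0 > -\ell(\underline{a}_j)-\ell(\gamma)$ when $\ell(\gamma)\ge 0$; in general just take $\delta$ large enough along a generator with positive $\ell$-value — the two sets are separated by the hyperplane $\{\ell = \ell(\delta)\}$ and cannot meet. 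Doing this for each of the finitely many pairs $(j,F)$ and taking $\delta_A$ to be a sufficiently large element of $\mbn A$ (possible since $\mbn A$ is positive, hence contains elements of arbitrarily large value under any such $\ell$) finishes the proof.

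\textbf{Main obstacle.} The only subtle point is the bookkeeping with $\ell(\gamma)$: the shift $\gamma$ (the generating degree $\alpha_{k,j}$ of the toric translate $S_{F_{k,j}}(\alpha_{k,j})$) need not be in $\mbn A$, so $\ell(\gamma)$ could be negative, and one must check that choosing $\delta_A$ far enough inside the cone still wins for \emph{all} the finitely many functionals $\ell$ simultaneously — this is where one genuinely uses that $\mbn A$ is a positive semigroup (so that it is not contained in any hyperplane and contains elements with all chosen $\ell$-values arbitrarily large) and that the union defining $sRes(A)$ is finite. I also need to double-check the edge case where some face $F_{k,j}$ is the trivial face $\{0\}$, so $\mbc F = \{0\}$ and $sRes_j$ contributes a genuine half-line $-(\mbn+1)\underline{a}_j - \gamma$; the same functional argument with $\ell(\underline{a}_j)>0$ handles it, and indeed this is exactly the failure mode that the counterexample $A=(2,5)$ to the unproven $\delta_A=0$ claim exhibits — there $0\in sRes(A)$ forces a nonzero shift.
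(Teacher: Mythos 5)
Your proof is correct and follows essentially the same route as the paper: both arguments reduce to separating a deep-enough translate of the cone $\mbr_+ A$ from the finitely many affine pieces $-( \mbn+1)\underline{a}_j \mp \alpha_{k,j} - \mbc F_{k,j}$ of $sRes(A)$, using that each $F_{k,j}$ is a proper face not containing $\underline{a}_j$. The only cosmetic difference is that you make the supporting functional $\ell$ of each face explicit and choose $\delta_A$ with all $\ell$-values large, whereas the paper achieves the same separation by taking $\delta_A = \alpha' + \sum_{k,j}\alpha_{k,j}$ with $\alpha'$ an interior point (using that the shifts $\alpha_{k,j}$ lie in $\mbn A$) and then handling the $-(l+1)\underline{a}_j$ translates by monotonicity.
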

\begin{proof}
Choose some non-zero vector $\alpha'$ in the interior of $\mbn A$, i.e. in $\mbn A \cap (\mbr_+ A)^\circ$. As we assumed that $\mbn A$ is a positive semigroup, the set $\mbr_+ A$ is a strongly convex cone in $\mbr^d$. We noticed above that $qdeg(S_{F})$ is the $\mbc$-linear span of the face $F$, therefore we can conclude that
$$
\mbr_+ A + \alpha' + \alpha_{k,j} \cap qdeg(S_{F_{k,j}}(\alpha_{k,j})) = \emptyset
$$
where we have used $qdeg(S_{F_{k,j}}(\alpha_{k,j})) = qdeg(S_{F_{k,j}}) + \alpha_{k,j} $. As every $\alpha_{j,k}$ for $j=1,\ldots ,d$ and $k=1, \ldots, l_j$ lies in $\mbn A$, the element $\alpha'' := \sum_{j=1}^d \sum_{k=1}^{l_j} \alpha_{k,j}$ also lies in $\mbn A$.  
Now set $\delta_A := \alpha' + \alpha''$. We have $\mbr_+A + \delta_A \subset \mbr_+ A + \alpha' + \alpha_{k,j}$ and therefore
$$
\mbr_+ A + \delta_A \cap qdeg(S_A / \langle \p_{\lambda_j} \rangle) = \emptyset\, .
$$
As $-a_j$ lies in $- \mbn A$, this shows that 
$$
\mbr_+ A + \delta_A  \cap qdeg(S_A / \langle \p_{\lambda_j} \rangle) - (l+1)a_j = \emptyset
$$ 
for every $l \in \mbn$ and therefore $\mbr_+ A + \delta_A \cap sRes_j(A) = \emptyset$. As this is true for every $j=1, \ldots ,d$, this shows the claim.
\end{proof}

\begin{ex}[II continued]
We continue our study of the GKZ-systems associated to the matrix 
\[
A = \left(\begin{matrix}3 & 2 & 0 \\ 1 & 1 & 1 \end{matrix}\right)
\]
and the corresponding strongly resonant values $sRes(A)$. The degrees and quasi-degrees of $S_A / \langle \p_i \rangle$ are sketched below. 
\begin{center}
\newdimen\scale
\scale=0.7cm
\begin{tikzpicture}
 \foreach \x in {-1,0,...,9}{
   \foreach \y in {-1,0,...,5}{ 
     \node[draw,circle,inner sep=0.5pt,fill] at (\scale*\x,\scale*\y) {};
   }
 }
 \foreach \x in {-1,0,...,9}{
      \foreach \y in {-1,0,...,5}{ 
        \node[draw,circle,inner sep=0.5pt,fill] at (8.5cm +\scale*\x,\scale*\y) {};
      }
    }
\foreach \y in {0,1,...,5}{
  \node[draw,circle,inner sep=2pt,fill] at (\scale*0,\scale*\y) {};
}
\foreach \y in {1,2,...,5}{
  \node[draw,circle,inner sep=2pt,fill] at (\scale*2,\scale*\y) {};
}
\foreach \y in {2,3,...,5}{
  \node[draw,circle,inner sep=2pt,fill] at (\scale*4,\scale*\y) {};
}
\foreach \y in {1,2,...,5}{
  \node[draw,circle,inner sep=2pt,fill] at (\scale*1,\scale*\y) {};
  \node[draw,circle,inner sep=1.3pt,fill,white] at (\scale*1,\scale*\y) {};
\node[draw,circle,inner sep=2pt,fill] at (\scale*3,\scale*\y) {};
  \node[draw,circle,inner sep=1.3pt,fill,white] at (\scale*3,\scale*\y) {};
}
\foreach \y in {2,3,...,5}{
  \node[draw,circle,inner sep=2pt,fill] at (\scale*5,\scale*\y) {};
  \node[draw,circle,inner sep=1.3pt,fill,white] at (\scale*5,\scale*\y) {};
  \node[draw,circle,inner sep=2pt,fill] at (\scale*6,\scale*\y) {};
  \node[draw,circle,inner sep=1.3pt,fill,white] at (\scale*6,\scale*\y) {};
}
\foreach \x in {7,8,...,9}{
  \foreach \y in {3,4,...,5}{
  \node[draw,circle,inner sep=2pt,fill] at (\scale*\x,\scale*\y) {};
  \node[draw,circle,inner sep=1.3pt,fill,white] at (\scale*\x,\scale*\y) {};
  }
}
\draw (\scale*0,\scale* -1.2) -- (\scale*0, \scale *5.4);
\draw (\scale*2,\scale* -1.2) -- (\scale*2, \scale *5.4);
\draw (\scale*4,\scale* -1.2) -- (\scale*4, \scale *5.4);
\node[draw,circle,inner sep=2pt,fill] at (\scale*0,\scale*-2) {};
\node[] at (\scale*2.2,\scale*-2) {$deg(S_A/ \langle \p_1 \rangle)$};
\draw (\scale*4.6,\scale* -2) -- (\scale*5.2, \scale *-2);
\node[] at (\scale*7.2,\scale*-2) {$qdeg(S_A/ \langle \p_1 \rangle)$};

%second diagram
%draw fat black points
\foreach \y in {0,...,5}{
\node[draw,circle,inner sep=2pt,fill] at (8.5cm+\scale*0,\scale*\y) {};
}
\foreach \y in {1,...,5}{
\node[draw,circle,inner sep=2pt,fill] at (8.5cm+\scale*3,\scale*\y) {};
}
\node[draw,circle,inner sep=2pt,fill] at (8.5cm+\scale*6,\scale*2) {};
\node[draw,circle,inner sep=2pt,fill] at (8.5cm+\scale*9,\scale*3) {};
%draw fat white points
\foreach \x in {1,...,2}{
  \foreach \y in {1,...,5}{
  \node[draw,circle,inner sep=2pt,fill] at (8.5cm+\scale*\x,\scale*\y) {};
  \node[draw,circle,inner sep=1.3pt,fill,white] at (8.5cm+\scale*\x,\scale*\y) {};
  }
}
\foreach \x in {4,...,8}{
  \foreach \y in {3,...,5}{
  \node[draw,circle,inner sep=2pt,fill] at (8.5cm+\scale*\x,\scale*\y) {};
  \node[draw,circle,inner sep=1.3pt,fill,white] at (8.5cm+\scale*\x,\scale*\y) {};
  }
}
\node[draw,circle,inner sep=2pt,fill] at (8.5cm+\scale*4,\scale*2) {};
\node[draw,circle,inner sep=1.3pt,fill,white] at (8.5cm+\scale*4,\scale*2) {};
\node[draw,circle,inner sep=2pt,fill] at (8.5cm+\scale*5,\scale*2) {};
\node[draw,circle,inner sep=1.3pt,fill,white] at (8.5cm+\scale*5,\scale*2) {};
\node[draw,circle,inner sep=2pt,fill] at (8.5cm+\scale*9,\scale*4) {};
\node[draw,circle,inner sep=1.3pt,fill,white] at (8.5cm+\scale*9,\scale*4) {};
\node[draw,circle,inner sep=2pt,fill] at (8.5cm+\scale*9,\scale*5) {};
\node[draw,circle,inner sep=1.3pt,fill,white] at (8.5cm+\scale*9,\scale*5) {};

%draw lines
\draw (8.5cm+\scale*0,\scale* -1.2) -- (8.5cm+\scale*0, \scale *5.4);
\draw (8.5cm+\scale*3,\scale* -1.2) -- (8.5cm+\scale*3, \scale *5.4);
\draw (8.5cm+\scale*-1.2,\scale* -0.4) -- (8.5cm+\scale*9.3, \scale *3.1);

%draw caption
\node[draw,circle,inner sep=2pt,fill] at (8.5cm+\scale*0,\scale*-2) {};
\node[] at (8.5cm+\scale*2.2,\scale*-2) {$deg(S_A/ \langle \p_2 \rangle)$};
\draw (8.5cm+\scale*4.6,\scale* -2) -- (8.5cm+\scale*5.2, \scale *-2);
\node[] at (8.5cm+\scale*7.2,\scale*-2) {$qdeg(S_A/ \langle \p_2 \rangle)$};
\end{tikzpicture}
\end{center}

\begin{center}
\scale=0.7cm
\begin{tikzpicture}
%draw small black points
\foreach \x in {-1,0,...,9}{
   \foreach \y in {-1,0,...,5}{ 
     \node[draw,circle,inner sep=0.5pt,fill] at (\scale*\x,\scale*\y) {};
   }
 }
%draw fat black points
\node[draw,circle,inner sep=2pt,fill] at (\scale*0,\scale*0) {};
\foreach \x in {0,...,3}{
  \foreach \y in {1,...,5}{
\node[draw,circle,inner sep=2pt,fill] at (\scale*\x,\scale*\y) {};
  }
}
\foreach \x in {4,...,6}{
  \foreach \y in {2,...,5}{
\node[draw,circle,inner sep=2pt,fill] at (\scale*\x,\scale*\y) {};
  }
}
\foreach \x in {7,...,9}{
  \foreach \y in {3,...,5}{
\node[draw,circle,inner sep=2pt,fill] at (\scale*\x,\scale*\y) {};
  }
}

%draw fat white points
\node[draw,circle,inner sep=1.3pt,fill,white] at (\scale*0,\scale*1) {};
\node[draw,circle,inner sep=1.3pt,fill,white] at (\scale*1,\scale*1) {};
\foreach \x in {0,...,3}{
  \foreach \y in {2,...,5}{
\node[draw,circle,inner sep=1.3pt,fill,white] at (\scale*\x,\scale*\y) {};
  }
}
\foreach \x in {4,...,6}{
  \foreach \y in {3,...,5}{
\node[draw,circle,inner sep=1.3pt,fill,white] at (\scale*\x,\scale*\y) {};
  }
}
\foreach \x in {7,...,9}{
  \foreach \y in {4,...,5}{
\node[draw,circle,inner sep=1.3pt,fill,white] at (\scale*\x,\scale*\y) {};
  }
}

%draw lines
\draw (\scale*-1.3,\scale* 0.23) -- (\scale*9.3, \scale *3.73);
\draw (\scale*-1.3,\scale* -0.1) -- (\scale*9.3, \scale *3.4);
\draw (\scale*-1.3,\scale* -0.403) -- (\scale*9.3, \scale *3.1);

%caption
\node[draw,circle,inner sep=2pt,fill] at (\scale*0,\scale*-2) {};
\node[] at (\scale*2.2,\scale*-2) {$deg(S_A/ \langle \p_3 \rangle)$};
\draw (\scale*4.6,\scale* -2) -- (\scale*5.2, \scale *-2);
\node[] at (\scale*7.2,\scale*-2) {$qdeg(S_A/ \langle \p_3 \rangle)$};
\end{tikzpicture}
\end{center}
Notice that $qdeg(S_A / \langle \p_i \rangle)$ is a finite union of translations of the linear spans of faces, which do not contain $\underline{a}_i$. The set $sRes(A)$ which is the union of the sets $sRes_i(A) = -(\mbn +1)\underline{a}_i + qdeg(S_A/\langle \p_i \rangle)$ and a possible choice of the cone $\delta_A + \mbr_+ A$ are sketched below:
\begin{center}
\scale=0.7cm
\begin{tikzpicture}
%draw grey regions
\filldraw[black,opacity=.2] (\scale*2,\scale*1) -- (\scale*2,\scale*5.3) -- (\scale*9.3,\scale*5.3) -- (\scale*9.3,\scale*3.43) -- (\scale*2,\scale*1);
\filldraw[black,opacity=.4] (\scale*0,\scale*0) -- (\scale*0,\scale*-3.3) -- (\scale*-3.3,\scale*-3.3) -- (\scale*-3.3,\scale*-1.1) -- (\scale*0,\scale*0); 
%draw small black points
\foreach \x in {-3,...,9}{
   \foreach \y in {-3,...,5}{ 
     \node[draw,circle,inner sep=0.5pt,fill] at (\scale*\x,\scale*\y) {};
   }
 }
% draw fat black points
\foreach \y in {3,...,5}{
  \foreach \x in {0,...,9}{
  \node[draw,circle,inner sep=2pt,fill] at (\scale*\x,\scale*\y) {};
  }
}
\foreach \x in {0,...,6}{
\node[draw,circle,inner sep=2pt,fill] at (\scale*\x,\scale*2) {};
}
\node[draw,circle,inner sep=2pt,fill] at (\scale*0,\scale*0) {};
\node[draw,circle,inner sep=2pt,fill] at (\scale*0,\scale*1) {};
\node[draw,circle,inner sep=2pt,fill] at (\scale*1,\scale*1) {};
\node[draw,circle,inner sep=2pt,fill] at (\scale*3,\scale*1) {};
% draw fat white points
\foreach \y in {1,...,5}{
  \node[draw,circle,inner sep=1.3pt,fill,white] at (\scale*1,\scale*\y) {};
}
%draw cross
%\node[draw,cross out,thick] at (\scale*2,\scale*1) {};
\node[draw,circle,inner sep=2pt,fill] at (\scale*2,\scale*1) {};
%draw captions
\filldraw[black,opacity=.2] (-1.9,-2.7) -- (-1.3,-2.7) -- (-1.3,-3.0) -- (-1.9,-3.0) -- (-1.9,-2.7); 
\node[] at (-0.1,-2.91) {$\delta_A + \mbr_+ A$};
\filldraw[black,opacity=.4] (1.1,-2.7) -- (1.7,-2.7) -- (1.7,-3.0) -- (1.1,-3.0) -- (1.1,-2.7); 
\node[] at (2.8,-2.91) {$-(\mbr_+ A)^\circ$};
\draw (\scale*5.6,\scale* -4.1) -- (\scale*6.2, \scale *-4.1);
\node[] at (\scale*7.5,\scale*-4.1) {$sRes(A)$};
%draw lines
\clip (-3.3,-2.3) rectangle (\scale*9.3, \scale*5.3);
\draw (\scale*-3,\scale* -3.3) -- (\scale*-3, \scale *5.3);
\draw (\scale*-2,\scale* -3.3) -- (\scale*-2, \scale *5.3);
\draw (\scale*-1,\scale* -3.3) -- (\scale*-1, \scale *5.3);
\draw (\scale*1,\scale* -3.3) -- (\scale*1, \scale *5.3);
\draw (\scale*-3.3,\scale* -1.43) -- (\scale*9.3, \scale *2.76);
\draw (\scale*-3.3,\scale* -1.77) -- (\scale*9.3, \scale *2.43);
\draw (\scale*-3.3,\scale* -2.1) -- (\scale*9.3, \scale *2.1);
\draw (\scale*-3.3,\scale* -2.43) -- (\scale*9.3, \scale *1.76);
\draw (\scale*-3.3,\scale* -2.77) -- (\scale*9.3, \scale *1.43);
\draw (\scale*-3.3,\scale* -3.1) -- (\scale*9.3, \scale *1.1);
\draw (\scale*-3.3,\scale* -3.43) -- (\scale*9.3, \scale *0.76);
\draw (\scale*-3.3,\scale* -3.77) -- (\scale*9.3, \scale *0.43);
\draw (\scale*-3.3,\scale* -4.1) -- (\scale*9.3, \scale *0.1);
\draw (\scale*-3.3,\scale* -4.43) -- (\scale*9.3, \scale *-0.24);
\draw (\scale*-3.3,\scale* -4.77) -- (\scale*9.3, \scale *-0.56);
\draw (\scale*-3.3,\scale* -5.1) -- (\scale*9.3, \scale *-0.9);
\draw (\scale*-3.3,\scale* -5.43) -- (\scale*9.3, \scale *-1.24);
\draw (\scale*-3.3,\scale* -5.77) -- (\scale*9.3, \scale *-1.56);
\draw (\scale*-3.3,\scale* -6.1) -- (\scale*9.3, \scale *-1.9);
\draw (\scale*-3.3,\scale* -6.43) -- (\scale*9.3, \scale *-2.24);
\draw (\scale*-3.3,\scale* -6.77) -- (\scale*9.3, \scale *-2.56);
\draw (\scale*-3.3,\scale* -7.1) -- (\scale*9.3, \scale *-2.9);

\end{tikzpicture}
\end{center}
The open cone $-(\mbr_+ A)^\circ$ will become important if one considers the holonomic dual of a GKZ-system with $\beta \notin sRes(A)$ (cf. Proposition \ref{prop:dualGKZ}).
\end{ex}
\noindent The lemma above can be improved in an important special case. We call the semigroup $\mbn A$ saturated if
\begin{equation}\label{eq:defsaturated}
\mbn A = \mbq_+ A \cap \mbz^d\, .
\end{equation}
and homogeneous if there exists a linear function $h: \mbz^d \ra \mbz$ satisfying $h(\underline{a}_i)=1$ for all columns $\underline{a}_i$ of $A$.
 
\begin{lem}\label{lem:sResnormal}
Let $\mbn A$ be a saturated semigroup then
\[
\mbr_+ A \cap sRes(A) = \emptyset\, .
\]
If moreover the matrix $A$ is homogeneous, then
\[
\mbz^d \setminus sRes(A) = \mbn A\, .
\]
\end{lem}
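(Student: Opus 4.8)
The plan is to prove the two assertions separately: the first is essentially a supporting‑functional argument, and the second hinges on one combinatorial fact about the facets of $\mbn A$.

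\emph{The first assertion.} I would begin from the decomposition \eqref{eq:quasideg}, which shows that every element of $sRes_j(A)$ can be written $\beta = -(l+1)\underline{a}_j - c - \alpha_{k,j}$ with $l \in \mbn$, with $c$ in the $\mbc$-linear span $\mbc F_{k,j}$ of a \emph{proper} face $F_{k,j}$ of $\mbn A$ not containing $\underline{a}_j$, and with $\alpha_{k,j} \in \mbn A$ (recall from the proof of Lemma~\ref{lem:sResnonnormal} that the shifts $\alpha_{k,j}$, being true degrees of the $S_A$-quotient $S_A/\langle\p_{\lambda_j}\rangle$, lie in $\mbn A$). Suppose such a $\beta$ also lies in $\mbr_+ A$. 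Since $\beta$, $\underline{a}_j$ and $\alpha_{k,j}$ are real, $c$ is forced to be real, so $c \in \mbr F_{k,j}$. I would then pick a supporting functional $\ell$ of the proper face $F_{k,j}$, i.e.\ a linear form on $\mbr^d$ that is $\ge 0$ on the pointed cone $\mbr_+ A$ (pointed because $\mbn A$ is positive) and vanishes exactly on $\mbr_+ F_{k,j}$; then $\ell(c)=0$, $\ell(\beta)\ge 0$, $\ell(\alpha_{k,j})\ge 0$, and $\ell(\underline{a}_j) > 0$ because $\underline{a}_j \notin F_{k,j}$. Evaluating $\ell$ on $\beta + (l+1)\underline{a}_j + c + \alpha_{k,j} = 0$ gives $0 \ge (l+1)\,\ell(\underline{a}_j) > 0$, a contradiction; hence $\mbr_+ A \cap sRes_j(A) = \emptyset$ for every $j$, and therefore $\mbr_+ A \cap sRes(A) = \emptyset$. (Only positivity of $\mbn A$ enters here; the obligatory summand $-(l+1)\underline{a}_j$ with $l\ge 0$ in the definition of $sRes_j$ is exactly what makes the inequality strict.)

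\emph{The second assertion.} Homogeneity forces $\mbn A$ to be positive, and saturation gives $\mbn A = \sigma \cap \mbz^d$ where $\sigma := \mbr_+ A$. The inclusion $\mbn A \subseteq \mbz^d \setminus sRes(A)$ is the first assertion, so I must show $\mbz^d \setminus \mbn A \subseteq sRes(A)$. Take $\beta \in \mbz^d$ with $\beta \notin \mbn A$; by saturation $\beta \notin \sigma$, so there is a facet $\tau_r = \sigma \cap \ker\psi_r$ of the (full-dimensional, since the columns span $\mbz^d$) cone $\sigma$, with primitive inner normal $\psi_r\colon \mbz^d \to \mbz$, such that $b := -\psi_r(\beta) \ge 1$. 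The crux is the claim that \emph{some column $\underline{a}_j$ satisfies $\psi_r(\underline{a}_j) = 1$}. Granting it, $F_r := \{\underline{a}_i : \psi_r(\underline{a}_i) = 0\}$ is a proper face with $\mbc F_r = \ker(\psi_r\colon \mbc^d \to \mbc)$ (because $\tau_r$ is a facet). I would then check $\mbc F_r \subseteq qdeg(S_A/\langle\p_{\lambda_j}\rangle)$: one has $S_A/\langle\p_{\lambda_j}\rangle \cong \bigoplus_{m \in \mbn A,\; m - \underline{a}_j \notin \mbn A} \mbc\,\underline{y}^m$, the set $\mbn F_r = \sigma \cap \tau_r \cap \mbz^d$ consists of true degrees of it (since $\psi_r(m - \underline{a}_j) = -1 < 0$ puts $m - \underline{a}_j$ outside $\sigma \supseteq \mbn A$), and $\mbn F_r$ generates the lattice $\ker\psi_r \cap \mbz^d$ by saturation, hence is Zariski-dense in $\mbc F_r$. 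Now put $l := b - 1 \ge 0$; the vector $\gamma := -\beta - b\,\underline{a}_j$ satisfies $\psi_r(\gamma) = 0$, so $\gamma \in \mbc F_r \subseteq qdeg(S_A/\langle\p_{\lambda_j}\rangle)$ and $\beta = -(l+1)\underline{a}_j - \gamma \in sRes_j(A) \subseteq sRes(A)$, as needed.

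The remaining task, and the one I expect to be the genuine obstacle, is the claim. The polyhedron $\sigma \cap \psi_r^{-1}(1)$ (with $\psi_r$ extended $\mbr$-linearly) is nonempty, since $\sigma$ is full-dimensional, and its recession cone is the facet $\tau_r$, which is full-dimensional inside the hyperplane $\psi_r^{-1}(0)$. Because a translate of a full-dimensional rational cone lying in a hyperplane always meets that hyperplane's integer lattice, and some $x_0 \in \mbz^d$ has $\psi_r(x_0) = 1$ (as $\psi_r$ is primitive), the set $\sigma \cap \psi_r^{-1}(1)$ contains an integer point $m$. By saturation $m \in \mbn A$, say $m = \sum_i \gamma_i \underline{a}_i$ with $\gamma_i \in \mbn$; applying $\psi_r$ gives $1 = \sum_i \gamma_i\,\psi_r(\underline{a}_i)$, a sum of nonnegative integers, which forces $\gamma_j\,\psi_r(\underline{a}_j) = 1$, i.e.\ $\psi_r(\underline{a}_j) = 1$, for exactly one column $\underline{a}_j$. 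Beyond this lattice-point placement — and the closely related Zariski-density of $\mbn F_r$ used above — all remaining steps are routine manipulations with supporting functionals and graded degrees.
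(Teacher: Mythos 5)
Your treatment of the second assertion's hard inclusion $\mbz^d\setminus\mbn A\subseteq sRes(A)$ is correct and takes a genuinely different route from the paper: the paper deduces it from Theorem \ref{thm:SW} combined with Saito's classification of GKZ-systems via the facet functionals $F_\sigma$ (\cite{SaitoMut1}, Theorem 5.2), whereas you exhibit the membership $\beta\in sRes_j(A)$ directly, by producing a column with $\psi_r(\underline{a}_j)=1$ on the violated facet and checking $\ker\psi_r\subseteq qdeg(S_A/\langle\p_{\lambda_j}\rangle)$ via Zariski density of $\tau_r\cap\mbz^d$. Both steps use saturation correctly, the lattice-point argument for your ``claim'' is sound, and your construction is insensitive to the sign ambiguity discussed below because the relevant quasi-degree component is the full linear subspace $\mbc F_r$. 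This is more elementary and self-contained than the paper's argument, which is a point in its favour.

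The first assertion, however, has a genuine gap. You read the definition as $sRes_j(A)=-(\mbn+1)\underline{a}_j-qdeg(S_A/\langle\p_{\lambda_j}\rangle)$, taking the minus sign in the Definition literally; but the convention actually in force throughout the paper is $sRes_j(A)=-(\mbn+1)\underline{a}_j+qdeg(S_A/\langle\p_{\lambda_j}\rangle)$ (this is the form used in Example II, in Lemma \ref{lem:betalift}, and inside the paper's own proof of this lemma, and it is the only reading under which $A=(2,5)$ is a counterexample to the unsaturated statement: with the plus sign one finds $1,3\in sRes(A)\cap\mbr_+A$, while with the minus sign $sRes(A)\subset(-\infty,0)$ and the first assertion would hold for \emph{every} positive semigroup). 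Under the correct sign your supporting-functional identity becomes $0=\ell(\beta)+(l+1)\ell(\underline{a}_j)-\ell(\alpha_{k,j})$, and since the filtration shifts $\alpha_{k,j}\in\mbn A$ can have $\ell(\alpha_{k,j})$ arbitrarily large, no contradiction results. The telltale symptom is that your proof of the first assertion never invokes saturation, although the statement is false without it. Saturation is exactly what the sets $I_{F,j}$ and $V_j$ in the paper's proof encode: a true degree $m$ of $S_A/\langle\p_{\lambda_j}\rangle$ satisfies $m\in\mbn A$ and $m-\underline{a}_j\notin\mbn A=\mbr_+A\cap\mbz^d$, so $m-\underline{a}_j$ lies outside the cone and hence $m\in\mbc F+t\,\underline{a}_j$ for some facet $F$ with $\underline{a}_j\notin F$ and some $t\in[0,1)$; only with this bound $t<1$ does $-(l+1)\underline{a}_j+qdeg(S_A/\langle\p_{\lambda_j}\rangle)$ land in the region where the supporting functional of $F$ is strictly negative, and hence miss $\mbr_+A$.
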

\begin{proof}
First notice that because $\mbn A$ is saturated the true degrees of $S_A$ form exactly the set $\mbn A$. Now observe that a monomial $P$ in $S_A$ is  non-zero in $S_A / \langle \p_{\lambda_j} \rangle$ if and only if $deg P - a_j$ is not in $\mbn A$.

\noindent As we observed above $qdeg(S_F)$ is the $\mbc$-linear span of a face $F$ of $\mbn A$. If $a_j \notin F$ consider the finite set 
$$
I_{F,j} := \{ t \in [0,1) \mid (qdeg(S_F) + t \cdot a_j) \cap (\mbn A) \neq \emptyset\}\, .
$$ 
The set
$$
V_j := \bigcup_{F : a_j \notin F} \bigcup_{t \in I_{F,j}} qdeg(S_F) + t \cdot a_j = \bigcup_{F : a_j \notin F} \bigcup_{t \in I_{F,j}} \mbc F + t \cdot a_j
$$
is Zariski-closed and contains the degrees of $S_A/ \langle \p_{\lambda_j} \rangle$ and therefore $qdeg(S_A/\langle \p_{\lambda_j} \rangle) \subset V_j$. Looking now at the definition of $sRes_j(A)$ we see that
\[
sRes_j(A) = \{\beta \mid \beta \in -(\mbn+1)a_j + qdeg(S_A / \langle \p_{\lambda_j} \rangle)\}\; \subset\; \bigcup_{l \in \mbn} -(l+1)a_j + V_j \, .
\]
But we clearly have $\mbr_+ A\; \cap\;  \bigcup_{l \in \mbn}\left( -(l+1)a_j + V_j \right)= \emptyset$ and therefore $\mbr_+ A \cap sRes_j(A) = \emptyset$. This shows the first claim. 

\noindent Now assume that $A$ is homogeneous. First notice that the first claim shows $\mbn A \subset \mbz^d \setminus sRes(A)$. In order to show the other direction, let $\beta \in \mbz^d \setminus sRes(A)$ and $\beta' \in \mbn A$. We have the following isomorphisms
\begin{equation}\label{eq:betansRes}
\mcm^\beta_A \simeq \FL(h_{A,+} \mco_T) \simeq \mcm^{\beta'}_A\, .
\end{equation}
We need the following definitions of \cite{SaitoMut1}. Let $\sigma$ be a facet of $\mbq_+ A$, i.e. a codimenion one face. To each facet
we associate a unique primitive, integral support function $F_\sigma$ satisfying the following properties
\begin{enumerate}
\item $F_\sigma(\mbz A) = \mbz$,
\item $F_\sigma(a_j) \geq 0$ for all $j= 1, \ldots ,n$,
\item $F_\sigma(a_j) = 0$ for all $a_j \in \sigma$.
\end{enumerate}
Because of the isomorphisms \eqref{eq:betansRes} and \cite[Theorem 5.2]{SaitoMut1} we have
\[
F_\sigma(\beta) = F_\sigma(\beta') \geq 0
\]
which shows $\beta \in \mbq_+ A \cap \mbz^{d} = \mbn A$.
\end{proof}

%\noindent Using the Lemma \ref{lem:sResnonnormal} above one easily sees that there exists a (non-unique) 
%$\beta_A \in \delta_A + \mbq_+ A \cap \mbz^d$ such that $\mathfrak{F}_A := \beta_A + ([0,1)^d \cap \mbq^d)$ is contained in $\delta_A + \mbq_+ A $. The following lemma shows that this set $\mathfrak{F}_A$ can be considered as a fundamental domain for rational $\beta$ which are not strongly resonant.
%
%\begin{lem}\label{lem:incone}
%Let $\beta \in \mbq^d$ with $\beta \notin sRes(A)$, then there exist a $\beta' \in \mathfrak{F}_A$ such that $\mcm_A^\beta \simeq \mcm_A^{\beta'}$. 
%\end{lem}
%\begin{proof}
%Notice that there exist a $\alpha \in \mbz^d$ so that $\beta + \alpha \in \mathfrak{F}_A$. We have
%\[
%\FL \mcm_A^\beta \simeq (k \circ l)_+ \mco_{T'}^\beta \simeq (k \circ l)_+ \mco_{T'}^{\beta + \alpha} \simeq  \FL(\mcm^{\beta + \alpha}_A)\, ,
%\]
%where we have used that $\mco_{T'}\underline{y}^\beta \simeq \mco_{T'}\underline{y}^{\beta + \alpha}$ . Setting $\beta' := \beta + \gamma$ shows the claim.
%\end{proof}
In the next proposition we want to show that the holonomic dual of a GKZ-system $\mcm^\beta_A$ with $\beta \notin sRes(A)$ is isomorphic to $\mcm_A^{-\beta'}$ with $-\beta'$ chosen appropriately. For this we have to introduce the following set:
\[
\mbd sRes(A) := \bigcup_{F \; \text{face of}\; A} (\mbz^d \cap \mbq_+ A) + \mbc F 
\]
Notice that a parameter $\beta \in \mbc^d$ with $\beta \notin \mbd sRes(A)$ is called semi-nonresonant in \cite{SaitoMut1}. It is easy to see that 
\[
\mbd sRes(A) \cap -(\mbr_+ A)^\circ = \emptyset\, . 
\]
%\newpage
\begin{ex}[II continued]
Now we sketch a real picture of the set $\mbd sRes(A)$:
\begin{center}
\scale=0.7cm
\begin{tikzpicture}
%draw grey regions
\filldraw[black,opacity=.4] (\scale*0,\scale*0) -- (\scale*0,\scale*-3.3) -- (\scale*-3.3,\scale*-3.3) -- (\scale*-3.3,\scale*-1.1) -- (\scale*0,\scale*0); 
%draw small black points
\foreach \x in {-3,...,9}{
   \foreach \y in {-3,...,5}{ 
     \node[draw,circle,inner sep=0.5pt,fill] at (\scale*\x,\scale*\y) {};
   }
 }
%draw captions
\filldraw[black,opacity=.4] (-0.9,-2.7) -- (-0.3,-2.7) -- (-0.3,-3.0) -- (-0.9,-3.0) -- (-0.9,-2.7); 
\node[] at (0.9,-2.91) {$-(\mbr_+ A)^\circ$};
\draw (\scale*3.4,\scale* -4.1) -- (\scale*4.0, \scale *-4.1);
\node[] at (\scale*5.5,\scale*-4.1) {$\mbd sRes(A)$};
%drawlines
\clip (-3.3,-2.3) rectangle (\scale*9.3, \scale*5.3);
\draw (\scale*-3.3,\scale* -0.43) -- (\scale*9.3, \scale *3.76);
\draw (\scale*-3.3,\scale* -0.77) -- (\scale*9.3, \scale *3.43);
\draw (\scale*-3.3,\scale* -1.1) -- (\scale*9.3, \scale *3.1);
\draw (\scale*-3.3,\scale* 0.57) -- (\scale*9.3, \scale *4.76);
\draw (\scale*-3.3,\scale* 0.23) -- (\scale*9.3, \scale *4.43);
\draw (\scale*-3.3,\scale* -0.1) -- (\scale*9.3, \scale *4.1);
\draw (\scale*-3.3,\scale* 1.57) -- (\scale*9.3, \scale *5.76);
\draw (\scale*-3.3,\scale* 1.23) -- (\scale*9.3, \scale *5.43);
\draw (\scale*-3.3,\scale* 0.9) -- (\scale*9.3, \scale *5.1);
\draw (\scale*-3.3,\scale* 2.57) -- (\scale*9.3, \scale *6.76);
\draw (\scale*-3.3,\scale* 2.23) -- (\scale*9.3, \scale *6.43);
\draw (\scale*-3.3,\scale* 1.9) -- (\scale*9.3, \scale *6.1);
\draw (\scale*-3.3,\scale* 3.57) -- (\scale*9.3, \scale *7.76);
\draw (\scale*-3.3,\scale* 3.23) -- (\scale*9.3, \scale *7.43);
\draw (\scale*-3.3,\scale* 2.9) -- (\scale*9.3, \scale *7.1);
\draw (\scale*-3.3,\scale* 4.57) -- (\scale*9.3, \scale *8.76);
\draw (\scale*-3.3,\scale* 4.23) -- (\scale*9.3, \scale *8.43);
\draw (\scale*-3.3,\scale* 3.9) -- (\scale*9.3, \scale *8.1);
\draw (\scale*-3.3,\scale* 4.9) -- (\scale*9.3, \scale *9.1);
\draw (\scale*0,\scale* -3.3) -- (\scale*0, \scale *5.3);
\draw (\scale*1,\scale* -3.3) -- (\scale*1, \scale *5.3);
\draw (\scale*2,\scale* -3.3) -- (\scale*2, \scale *5.3);
\draw (\scale*3,\scale* -3.3) -- (\scale*3, \scale *5.3);
\draw (\scale*4,\scale* -3.3) -- (\scale*4, \scale *5.3);
\draw (\scale*5,\scale* -3.3) -- (\scale*5, \scale *5.3);
\draw (\scale*6,\scale* -3.3) -- (\scale*6, \scale *5.3);
\draw (\scale*7,\scale* -3.3) -- (\scale*7, \scale *5.3);
\draw (\scale*8,\scale* -3.3) -- (\scale*8, \scale *5.3);
\draw (\scale*9,\scale* -3.3) -- (\scale*9, \scale *5.3);
\end{tikzpicture}
\end{center}
\end{ex}
\begin{prop}\label{prop:dualGKZ}
Let $A$ be homogeneous and $\beta \in  \mbq^{d} \setminus sRes(A)$. Then
\[
\mbd \mcm^{\beta}_{A} \simeq \mcm^{\beta'}_{A}
\]
for all $\beta' \notin \mbd sRes(A)$ with $\beta \equiv -\beta'\; \textup{mod}\; \mbz^d$.
\end{prop}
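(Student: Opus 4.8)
The plan is to compute $\mbd\,\mcm^{\beta}_{A}$ by transporting it through the Fourier--Laplace transformation, where holonomic duality becomes transparent, and then to recognise the outcome as a GKZ-system by means of Walther's computation of the holonomic dual \cite{Walther1} and Saito's classification of GKZ-systems up to isomorphism \cite{SaitoMut1}. Since $A$ is homogeneous there is a linear form $h\colon\mbz^{d}\to\mbz$ with $h(\underline{a}_{i})=1$ for all $i$; this forces $0$ to be the only unit of $\mbn A$, so $\mbn A$ is positive and Theorem~\ref{thm:SW} applies, and it also makes the Euler field act by the degree function $h$, so that $\mcm^{\beta}_{A}$ and the module $h_{A,+}\,\mco_{T'}\underline{y}^{\beta}$ are monodromic (Lemma~\ref{lem:GKZmono}) and Brylinski's theorem is available. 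For $\beta\notin sRes(A)$ we have $\mcm^{\beta}_{A}\simeq\FL\bigl(h_{A,+}\,\mco_{T'}\underline{y}^{\beta}\bigr)$ with $h_{A}=l\circ k$; applying $\mbd$ and using $\mbd\circ\FL\simeq\FL\circ\mbd$ on $D^b_{mon}$ together with the definition $h_{A,\dag}=\mbd\circ h_{A,+}\circ\mbd$ gives
\[
\mbd\,\mcm^{\beta}_{A}\;\simeq\;\FL\bigl(\mbd(h_{A,+}\,\mco_{T'}\underline{y}^{\beta})\bigr)\;\simeq\;\FL\bigl(h_{A,\dag}(\mbd\,\mco_{T'}\underline{y}^{\beta})\bigr).
\]
Now $\mco_{T'}\underline{y}^{\beta}$ is an integrable connection on the smooth torus $T'$, hence its holonomic dual is the dual connection $\mco_{T'}\underline{y}^{-\beta}$; and since multiplication by a Laurent monomial is an automorphism of $T'$, the $\mcd_{T'}$-module $\mco_{T'}\underline{y}^{\gamma}$ depends only on $\gamma$ modulo $\mbz^{d}$. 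Therefore $\mbd\,\mcm^{\beta}_{A}\simeq\FL\bigl(h_{A,\dag}\,\mco_{T'}\underline{y}^{\beta'}\bigr)$ for every $\beta'$ congruent to $-\beta$ modulo $\mbz^{d}$.

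It remains to show that $\FL\bigl(h_{A,\dag}\,\mco_{T'}\underline{y}^{\beta'}\bigr)\simeq\mcm^{\beta'}_{A}$ whenever $\beta'\notin\mbd sRes(A)$; this is the assertion dual to Theorem~\ref{thm:SW}, with the proper direct image $h_{A,\dag}$ replacing $h_{A,+}$ (cf.\ Proposition~\ref{prop:dualSW}). By Walther's computation of the holonomic dual for generic parameter \cite{Walther1}, $\mbd\,\mcm^{\gamma}_{A}\simeq\mcm^{-\gamma-\rho_{A}}_{A}$ for a fixed vector $\rho_{A}\in\mbz^{d}$ (depending only on $A$) and all $\gamma$ outside a proper subvariety of $\mbc^{d}$; feeding this into the first part yields $\FL\bigl(h_{A,\dag}\,\mco_{T'}\underline{y}^{\delta}\bigr)\simeq\mcm^{\delta}_{A}$ for generic $\delta$. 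To propagate this to all $\delta\notin\mbd sRes(A)$ in the residue class of $-\beta$, I would combine three facts: by the first part, $\FL\bigl(h_{A,\dag}\,\mco_{T'}\underline{y}^{\delta}\bigr)$ depends only on $\delta$ modulo $\mbz^{d}$; by Saito's classification \cite[Theorem~5.2]{SaitoMut1}, for semi-nonresonant $\delta$ (those outside $\mbd sRes(A)$) the isomorphism class of $\mcm^{\delta}_{A}$ likewise depends only on $\delta$ modulo $\mbz^{d}$; and $\beta\notin sRes(A)$ forces $-\beta-\rho_{A}\notin\mbd sRes(A)$, which is a comparison of the cones defining $sRes(A)$ and $\mbd sRes(A)$. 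Together these spread the generic identification over the entire residue class; the hypothesis $\beta\in\mbq^{d}$ enters at this point, ensuring through Saito's list that only finitely many isomorphism types occur along that class.

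Combining the two parts, $\mbd\,\mcm^{\beta}_{A}\simeq\FL\bigl(h_{A,\dag}\,\mco_{T'}\underline{y}^{\beta'}\bigr)\simeq\mcm^{\beta'}_{A}$ for every $\beta'\notin\mbd sRes(A)$ with $\beta\equiv-\beta'$ modulo $\mbz^{d}$, which is the assertion. The first paragraph is essentially formal --- it uses only the compatibility of $\mbd$ with $\FL$, the identity $\mbd\,\mco_{T'}\underline{y}^{\beta}=\mco_{T'}\underline{y}^{-\beta}$, and the monodromicity coming from homogeneity. The main obstacle is the second paragraph: upgrading Walther's generic duality statement to all non-strongly-resonant rational parameters, which requires a careful analysis of Saito's list of isomorphism classes of GKZ-systems together with the precise combinatorial geometry relating $sRes(A)$ and $\mbd sRes(A)$ under $\beta\mapsto-\beta$; once this is settled, the remainder is routine.
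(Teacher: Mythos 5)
Your first paragraph is correct but is a detour: the paper proves this proposition without ever passing through the Fourier--Laplace transform (that route is reserved for Proposition \ref{prop:dualSW}, which is \emph{deduced from} the present statement, so leaning on Proposition \ref{prop:dualSW} here would be circular in the paper's logical order; your independent sketch of it avoids outright circularity but inherits all of the difficulty). The real work is in your second paragraph, and there is a genuine gap there. Your propagation argument needs a single parameter $\delta_0$ in the residue class of $-\beta$ which is simultaneously (i) of the form $-\gamma-\rho_A$ with $\gamma$ in Walther's Zariski-open locus, (ii) such that $\gamma\notin sRes(A)$ and $\gamma\equiv\beta$ modulo $\mbz^d$ (so that your first paragraph applies to $\gamma$), and (iii) semi-nonresonant, i.e.\ $\delta_0\notin \mbd sRes(A)$ (so that Saito's classification can spread the identification over the whole class). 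You assert that (iii) follows from $\beta\notin sRes(A)$ ``by a comparison of the cones'', but you give no argument, and the implication $\beta\notin sRes(A)\Rightarrow -\beta-\rho_A\notin\mbd sRes(A)$ is not obvious: both sets are countable unions of translated affine subspaces of rather different shapes, and nothing you wrote rules out $-\beta-\rho_A$ landing in one of the strata of $\mbd sRes(A)$. You also never verify that a $\gamma$ satisfying (i) and (ii) exists in the coset $\beta+\mbz^d$.

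The paper closes exactly this gap with one integer translation. Since $\beta\in\mbq^d$, the coset $\beta+\mbz^d$ meets $U\cap(\delta_A+\mbr_+A)$, where $U$ is Walther's open set; by Lemma \ref{lem:sResnonnormal} the translated cone $\delta_A+\mbr_+A$ avoids $sRes(A)$, so $\mcm^{\beta}_A\simeq\mcm^{\beta+\alpha}_A$ by Theorem \ref{thm:SW} (both are $\FL(h_{A,+}\mco_{T'}\underline{y}^{\beta})$), Walther's duality applies at $\beta+\alpha$, and the resulting parameter $-\beta-\alpha-\epsilon_{A'}$ lies in $-(\mbr_+A)^{\circ}$, which is disjoint from $\mbd sRes(A)$ by the remark following its definition; Saito's Corollary 2.6 then identifies $\mcm^{-\beta-\alpha-\epsilon_{A'}}_A$ with $\mcm^{\beta'}_A$. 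If you substitute this choice of representative for your unproved cone comparison, your argument goes through; as written it does not. Finally, your explanation that $\beta\in\mbq^d$ is needed ``so that only finitely many isomorphism types occur along the class'' is not the right reason: rationality (indeed, reality) is what allows an integer translate of $\beta$ to be placed inside the real cone $\delta_A+\mbr_+A$ and $-\beta$ to be placed inside $-(\mbr_+A)^{\circ}$.
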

\begin{proof}
By \cite[Theorem 4.8] {Walther1} there exists a Zariski open subset $U \subset \mbc^{d}$ of parameters (depending on $A$) such that $\mbd \mcm^{\beta}_{A}$ is isomorphic to $\mcm^{-\beta - \epsilon_{A'}}_{A}$ for all $\beta \in U$, where
$\epsilon_{A'}$ is the sum of the columns of a matrix $A'$, where $A'$ is a matrix which generates the saturation of $\mbn A$.

\noindent Now let $\beta \in \mbq^d \setminus sRes(A)$, there exists an $\alpha \in \mbz^{d}$ such that $\beta + \alpha \in U \cap (\delta_{A} + \mbr_+ A)$. We have the following isomorphisms
\[
\mbd \mcm^{\beta}_{A} \simeq \mbd \mcm^{\beta + \alpha}_A \simeq \mcm^{- \beta - \alpha - \epsilon_{A}'}_{A} \simeq \mcm^{\beta'}_{A}\, .
\]
The first isomorphism holds because of Theorem \ref{thm:SW} and Lemma \ref{lem:sResnonnormal}  and the last isomorphism holds because $- \beta - \alpha - \epsilon_{A'} \in -(\mbr_+ A)^{\circ}$ and \cite{SaitoMut1} Corollary 2.6.
\end{proof}

In order to prove the dual statement of the theorem of Schulze and Walther above, we will need the following lemma.

\begin{lem}\label{lem:GKZmono}
Let $A$ be homogeneous and $\beta \in \mbc^d$, then the GKZ-system $\mcm^\beta_A$ is monodromic.
In particular, the $\mcd$-module $h_{A,+}(\mco_{T'} \underline{y}^\beta)$ is monodromic for every $\beta \notin sRes(A)$.
\end{lem}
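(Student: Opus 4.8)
\textbf{Proof plan.}

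The plan is to use the homogeneity of $A$ to rewrite the Euler vector field on $W'=\mbc^n$ in terms of the operators $E_k$ occurring in the presentation of $\mcm^\beta_A$, and then to check the finiteness of the Euler action directly on $M^\beta_A=D/J$, where $J$ is the left ideal generated by the $\Box_{\underline l}$ and the $E_k-\beta_k$. Since $A$ is homogeneous there is a linear form $h=(h_1,\dots,h_d)$ with $h(\underline a_i)=\sum_k h_k a_{ki}=1$ for every $i$, so in the Weyl algebra $D$ one has
\[
\sum_{k=1}^d h_k E_k \;=\; \sum_{i=1}^n\Big(\sum_{k=1}^d h_k a_{ki}\Big)\lambda_i\p_{\lambda_i}\;=\;\sum_{i=1}^n\lambda_i\p_{\lambda_i}\;=\;\mathfrak{E},
\]
the Euler field attached to the natural $\mbc^\ast$-action on $W'$. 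Note also that homogeneity forces $(1,\dots,1)$ to lie in the row span of $A$, so $\mcm^\beta_A$ is regular holonomic, which is what is needed for monodromicity to make sense.

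Next I would compute the Euler action on a global section $v=[P]$, $P\in D$. Since $J$ is a left ideal, $P(E_k-\beta_k)\in J$, hence $[PE_k]=\beta_k[P]$; writing $E_kP=PE_k+[E_k,P]$ and summing against the $h_k$ gives
\[
\mathfrak{E}\cdot v\;=\;[\mathfrak{E}P]\;=\;h(\beta)\,v\;+\;\big[[\mathfrak{E},P]\big].
\]
Thus $v\mapsto\mathfrak{E}v$ on $M^\beta_A$ is induced by the operator $\widetilde\Theta:=h(\beta)\cdot\id+\operatorname{ad}(\mathfrak{E})$ on $D$. Now $\operatorname{ad}(\mathfrak{E})$ is the grading operator for the $\mbz$-grading $D=\bigoplus_{e\in\mbz}D_e$ by total degree (monomials $\lambda^\alpha\p^\gamma$ with $|\alpha|-|\gamma|=e$), so $\widetilde\Theta$ acts on $D_e$ by the scalar $h(\beta)+e$. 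Writing $P=\sum_{e\in S}P_e$ with $S\subset\mbz$ finite and $P_e\in D_e$, one gets $\widetilde\Theta^{\,m}(P)=\sum_{e\in S}(h(\beta)+e)^m P_e$, which lies in the finite-dimensional $\mbc$-span of $\{P_e:e\in S\}$ for all $m$. Hence $\{\mathfrak{E}^m v:m\in\mbn\}$ spans a finite-dimensional space, i.e.\ $\mathfrak{E}$ acts finitely on $\Gamma(\mbc^n,\mcm^\beta_A)$, and $\mcm^\beta_A$ is monodromic.

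For the last sentence, observe that $\mbn A$ is a positive semigroup when $A$ is homogeneous (a non-zero element has $h$-value $\ge 1$, so is not a unit), so Theorem~\ref{thm:SW} gives, for $\beta\notin sRes(A)$, an isomorphism $\mcm^\beta_A\simeq\FL\big(h_{A,+}(\mco_{T'}\underline y^{\beta})\big)$. Since $\FL$ is an equivalence of $D^b_h(\mcd_{W})$ onto $D^b_h(\mcd_{W'})$ with $\FL\circ\FL$ isomorphic to pullback along $\underline\lambda\mapsto-\underline\lambda$ (which preserves monodromicity), applying $\FL$ yields $h_{A,+}(\mco_{T'}\underline y^{\beta})\simeq\FL(\mcm^\beta_A)$, and $\FL$ carries the monodromic $\mcm^\beta_A$ to a monodromic complex; hence $h_{A,+}(\mco_{T'}\underline y^{\beta})$ is monodromic. (Alternatively: $h_A\colon T'\to W$ is equivariant for the $\mbc^\ast$-action $z\cdot\underline y=(z^{h_k}y_k)_k$ on $T'$ and the scaling action on $W$, the rank one module $\mco_{T'}\underline y^{\beta}$ is monodromic for that $T'$-action since its Euler field $\sum_k h_k y_k\p_{y_k}$ acts on the monomial section $\underline y^{m}$ by the scalar $h(m)+h(\beta)$, and direct images along equivariant maps preserve monodromicity.) I expect the only real friction to be the bookkeeping around regularity and the precise involutivity property of $\FL$ on all of $D^b_h$; the computation of the Euler action, which is the heart of the matter, is purely formal.
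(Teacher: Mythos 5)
Your proof is correct and follows essentially the same route as the paper: express the Euler field $\mathfrak{E}$ as $\sum_k h_k E_k$ using homogeneity, observe that $\mathfrak{E}-h(\beta)$ then acts on a class $[P]$ through the adjoint action of $\mathfrak{E}$, i.e.\ by the total-degree grading on monomials, whence finiteness; and deduce the second claim from Theorem~\ref{thm:SW} together with the fact that $\FL$ preserves monodromicity. Your extra care about $\FL\circ\FL$ (and the alternative equivariance argument) only makes explicit a point the paper's proof leaves implicit.
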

\begin{proof}
If $A$ is homogeneous, by definition there exists $h: \mbz^d \ra \mbz$ with $h(\underline{a}_i) =1$. This gives rise to a linear combination of the Euler field  $\mathfrak{E}$ in terms of the operators $E_k$:
\[
\mathfrak{E} = \sum_{i=1}^n \lambda_i \p_i = \sum_{k=1}^d h_k E_k,
\]
where $h_k$ are the components of $h$ with respect to the standard basis of $\mbz^d$. Therefore $\mathfrak{E} - b$ is zero in $\mcm_A^\beta$, where $b := \sum_{k=1}^d h_k \beta_k$. This shows  that $\mathfrak{E}-b$ acts on the monomial
\[
\prod_{i=1}^n \lambda_i^{d_i} \cdot \prod_{i=1}^n \p_i^{e_i}
\]
by multiplication with $\sum_{i=1}^n d_i - \sum_{i=1}^n e_i$. Taking a general element $v$, the space $\mathfrak{E}^{l}(v)$ is a subspace of the finite-dimensional $\mbc$-vectorspace generated by the monomials appearing in $v$, which shows the first claim.

The second claim follows from the fact that $h_{A,+} \mco_T\underline{y}^{\beta} \simeq \FL(\mcm^\beta_A)$ for $\beta \notin sRes(A)$ and the fact that the Fourier-Laplace-transform sends monodromic $\mcd$-modules to monodromic $\mcd$-modules.
\end{proof}

\begin{prop}\label{prop:dualSW}
Let $A$ be a homogeneous $d \times n$ integer matrix and let $\beta' \in \mbq^d$ with $\beta' \notin \mbd sRes(A)$, then
\[
\FL(h_{A,\dag} \mco_{T'} \underline{y}^{\beta'}) \simeq \mcm_A^{\beta'}\, .
\]
\end{prop}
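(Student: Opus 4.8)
The strategy is to deduce the statement from Proposition \ref{prop:dualGKZ} (duality for GKZ-systems), the theorem of Schulze and Walther (Theorem \ref{thm:SW}), and the compatibility of Fourier-Laplace transformation with holonomic duality (part (2) of Brylinski's theorem), using also Lemma \ref{lem:GKZmono} to stay inside the monodromic category where that compatibility holds. The key point is that the functor $h_{A,\dag}=\mbd\circ h_{A,+}\circ\mbd$ is \emph{defined} as the duality-conjugate of $h_{A,+}$, so proving the claimed isomorphism amounts to dualizing the Schulze-Walther description of $\FL(\mcm_A^\beta)$.

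First I would choose an auxiliary parameter: given $\beta'\in\mbq^d\setminus\mbd sRes(A)$, pick $\beta\in\mbq^d\setminus sRes(A)$ with $\beta\equiv-\beta'\ (\mathrm{mod}\ \mbz^d)$; this is possible because $-\beta'\notin -\mbd sRes(A)\supset$ (a translate near the cone) and $sRes(A)$ avoids a shifted copy of $\mbr_+A$ by Lemma \ref{lem:sResnonnormal} — concretely one moves $-\beta'$ by a lattice vector into $\delta_A+\mbr_+A$, which misses $sRes(A)$. For this $\beta$, Theorem \ref{thm:SW} gives $\mcm_A^\beta\simeq\FL(h_{A,+}\,\mco_{T'}\underline{y}^\beta)$. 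Now apply the holonomic duality functor $\mbd$. On the left, Proposition \ref{prop:dualGKZ} gives $\mbd\mcm_A^\beta\simeq\mcm_A^{\beta'}$, since $\beta\equiv-\beta'\ (\mathrm{mod}\ \mbz^d)$ and $\beta'\notin\mbd sRes(A)$. On the right, I use that $\FL$ commutes with $\mbd$ on monodromic complexes (Brylinski's theorem, part (2)); by Lemma \ref{lem:GKZmono} the matrix $A$ is homogeneous so $h_{A,+}\mco_{T'}\underline{y}^\beta\simeq\FL(\mcm_A^\beta)$ is monodromic, hence so is $\mcm_A^\beta=\FL(h_{A,+}\mco_{T'}\underline{y}^\beta)$, and we get $\mbd\,\FL(h_{A,+}\mco_{T'}\underline{y}^\beta)\simeq\FL(\mbd\,h_{A,+}\mco_{T'}\underline{y}^\beta)=\FL(h_{A,\dag}\,\mbd(\mco_{T'}\underline{y}^\beta))$, by the very definition of $h_{A,\dag}$.

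It remains to identify $\mbd(\mco_{T'}\underline{y}^\beta)$. Since $T'$ is an affine torus, $\mco_{T'}\underline{y}^\beta=\mcd_{T'}/\mcd_{T'}\cdot(y_k\p_{y_k}-\beta_k)_k$ is a rank-one connection, and its holonomic dual on the smooth variety $T'$ is again a rank-one connection, namely $\mco_{T'}\underline{y}^{-\beta-\underline{1}}$ (the one-variable computation $\mbd(\mcd_\mbc/\mcd_\mbc(\lambda\p-\beta))=\mcd_\mbc/\mcd_\mbc(\lambda\p+\beta+1)$ from Example I, taken tensor-wise over the $d$ torus factors; alternatively $\mbd$ of a connection $\mce$ on a smooth variety of dimension $d$ is $\mce^\vee$). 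Thus $h_{A,\dag}\,\mbd(\mco_{T'}\underline{y}^\beta)=h_{A,\dag}\,\mco_{T'}\underline{y}^{-\beta-\underline{1}}$, and I must match $-\beta-\underline{1}$ with $\beta'$. Here I would invoke that $h_{A,\dag}\mco_{T'}\underline{y}^{\gamma}$ depends on $\gamma$ only through the GKZ-system it produces after $\FL$, and that (by the classification results of \cite{SaitoMut1}, exactly as used in the proof of Proposition \ref{prop:dualGKZ}) two semi-nonresonant parameters congruent mod $\mbz^d$ give isomorphic systems; since $-\beta-\underline{1}\equiv\beta'\ (\mathrm{mod}\ \mbz^d)$ and one checks $-\beta-\underline{1}\in-(\mbr_+A)^\circ$-type region disjoint from $\mbd sRes(A)$, we conclude $\FL(h_{A,\dag}\mco_{T'}\underline{y}^{-\beta-\underline{1}})\simeq\FL(h_{A,\dag}\mco_{T'}\underline{y}^{\beta'})$, whence $\mcm_A^{\beta'}\simeq\FL(h_{A,\dag}\mco_{T'}\underline{y}^{\beta'})$.

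**Main obstacle.** The routine parts are the torus-duality computation of $\mbd(\mco_{T'}\underline{y}^\beta)$ and the bookkeeping of congruences mod $\mbz^d$. The delicate step is making rigorous the passage from $h_{A,\dag}\mco_{T'}\underline{y}^{-\beta-\underline{1}}$ to $h_{A,\dag}\mco_{T'}\underline{y}^{\beta'}$: a priori the proper direct image $h_{A,\dag}$ of a rank-one bundle is \emph{not} obviously insensitive to integer shifts of the exponent (unlike on the open torus, where $\underline{y}^\gamma\simeq\underline{y}^{\gamma+\text{integer}}$). One must argue this either after applying $\FL$ — where $\FL(h_{A,\dag}\mco_{T'}\underline{y}^\gamma)=\mbd\FL(h_{A,+}\mco_{T'}\underline{y}^{-\gamma-\underline{1}})=\mbd\mcm_A^{-\gamma-\underline{1}}$ is a GKZ-system whose isomorphism type under integer shifts of semi-nonresonant parameters is controlled by \cite[Cor.~2.6]{SaitoMut1} and \cite[Thm.~5.2]{SaitoMut1} — or directly via Saito's classification. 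I expect the write-up to route everything through $\FL$ and cite the same two results from \cite{SaitoMut1} that power Proposition \ref{prop:dualGKZ}, so that the whole proof is essentially: apply $\mbd$ to Theorem \ref{thm:SW}, use $\mbd\circ\FL=\FL\circ\mbd$ (legitimate by Lemma \ref{lem:GKZmono}), unwind the definition of $h_{A,\dag}$, and absorb the exponent shift into Saito's isomorphism classes.
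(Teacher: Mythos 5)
Your argument is correct and is essentially the paper's proof: both reduce to Theorem \ref{thm:SW} by using the freedom to shift the exponent of the rank-one module by a lattice vector on the torus so that the parameter appearing on the $h_{A,+}$-side lands in $\delta_A+\mbr_+A$ (Lemma \ref{lem:sResnonnormal}), commute $\mbd$ with $\FL$ via monodromicity (Lemma \ref{lem:GKZmono}), and conclude with Proposition \ref{prop:dualGKZ}. The only comment is that your ``main obstacle'' is not one: since $-\beta-\underline{1}\equiv\beta'\ (\mathrm{mod}\ \mbz^d)$, the modules $\mco_{T'}\underline{y}^{-\beta-\underline{1}}$ and $\mco_{T'}\underline{y}^{\beta'}$ are already isomorphic as $\mcd_{T'}$-modules on the torus, so $h_{A,\dag}$ carries this to an isomorphism by mere functoriality --- this is exactly the step where the paper invokes $\mbd\,\mco_{T'}\underline{y}^{\beta''}\simeq\mco_{T'}\underline{y}^{-\beta''+\alpha}$, and no appeal to Saito's classification is needed there.
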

\begin{proof}
First notice that we have
\[
\FL(h_{A,\dag} \mco_{T'}\underline{y}^{\beta'}) \simeq \FL(h_{A,\dag} \mco_{T'}\underline{y}^{\beta''})
\]
for $\beta'' \in - (\mbr_+ A)^\circ$  with $\beta'' \equiv \beta'\; mod\; \mbz^d$. By the definition of $h_{A,\dag}$ we have
\[
\FL(h_{A,\dag} \mco_{T'}\underline{y}^{\beta''}) \simeq  \FL( \mbd h_{A,+}(\mco_{T'} \underline{y}^{\delta_A - \beta''}))\, ,
\]
where we have used that $\mbd \mco_{T'}\underline{y}^{\beta''} \simeq \mco_{T'} \underline{y}^{-\beta'' + \alpha}$ holds for every $\alpha \in \mbz$.
Because of  Lemma \ref{lem:GKZmono} and the fact that the monodromic Fourier-Laplace transform commutes with duality, we have the following isomorphism
\[
\FL( \mbd h_{A,+}(\mco_{T'} \underline{y}^{\delta_A - \beta''})) \simeq \mbd \FL( h_{A,+}(\mco_{T'} \underline{y}^{\delta_A - \beta''})).
\]
We conclude by applying Theorem \ref{thm:SW} and Lemma \ref{lem:sResnonnormal}
\[
\mbd \FL(h_{A,+}(\mco_{T'} \underline{y}^{\delta_A - \beta''})) \simeq \mbd \mcm_A^{\delta_A - \beta''} \simeq \mcm_A^{\beta'}\, ,
\]
where the last isomorphism follows from Proposition \ref{prop:dualGKZ}.
\end{proof}

We need still another fact of the theory of hypergeometric systems $\mcm_{A}^{\beta}$. 
Let $\beta, \beta' \in \mbc^{d}$ so that $\beta - \beta' \in \mbn A$.
We denote by $\p^{\beta - \beta'}$ the element $\prod^n_{i=1} \p_i^{\gamma_i}$ in $\mcm^\beta_A$, where $(\gamma_i)_{i=1,\ldots,n}$ is chosen such that $(\beta- \beta')_k = \sum_{i=1}^n a_{ki} \gamma_i$. Notice that this is well-defined in $\mcm^\beta_A$ due to the box-operators $(\Box_l)_{l \in \mbl}$.  There is the following rigidity result for morphisms between such GKZ-systems.
\begin{prop}\label{prop:gkzuniquemorph}
Let $\psi: \mcm^{\beta'}_{A} \lra \mcm^{\beta}_{A}$ be a non-zero $\mcd$-linear morphism, then (up to multiplication with a non-zero constant) $\psi$ is given by right multiplication with $\p^{\beta - \beta'}$.
\end{prop}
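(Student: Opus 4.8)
The plan is to reduce the statement to a computation in the ring of global sections $M^{\beta}_A$ and $M^{\beta'}_A$ over the Weyl algebra $D$, and to pin down the morphism by tracking where the generator $1 \in M^{\beta'}_A$ can go. First I would observe that a $\mcd$-linear morphism $\psi\colon \mcm^{\beta'}_A \to \mcm^{\beta}_A$ is determined by $\psi(\overline{1})$, which must be an element $\overline{P} \in M^{\beta}_A$ annihilated (on the left) by the defining left ideal of $M^{\beta'}_A$, i.e. by all $\Box_{\underline{l}}$ (these are the same for both systems) and by $E_k - \beta'_k$ for $k = 1,\ldots,d$. Since $E_k - \beta_k$ already acts as zero on $M^{\beta}_A$, the condition on $P$ coming from the Euler operators is that $(E_k - \beta'_k)\,\overline{P} = (\beta_k - \beta'_k)\,\overline{P}$ in reverse — more precisely, writing $P$ as a representative, one needs $E_k P \equiv \beta'_k P$ modulo the left ideal defining $\mcm^\beta_A$, which forces $P$ to be $\mbz^d$-homogeneous of degree $\beta' - \beta$ with respect to the grading $\deg \lambda_i = -\underline{a}_i$, $\deg \partial_i = \underline{a}_i$ — and in particular, since $\beta - \beta' \in \mbn A$ by hypothesis, $\partial^{\beta - \beta'}$ is such a candidate, and right multiplication by it is easily checked to be a well-defined $\mcd$-linear map because $\partial^{\beta-\beta'}$ is well-defined modulo the box operators.

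The substance is the \emph{uniqueness} (up to scalar): I must show the space of such $P$ is one-dimensional. Here I would follow the route the author attributes to Saito's work \cite{SaitoMut3}: the homogeneity constraint on $P$, together with the box-operator relations, confines $P$ to lie in a graded piece of $M^\beta_A$, and one analyzes the structure of $M^\beta_A$ as a $\mbz^d$-graded module. Concretely, after using invertibility of the $\partial_{\lambda_i}$ where available (Theorem \ref{thm:SW}) — or more robustly the identification $\FL(\mcm^\beta_A) \simeq h_{A,+}\mco_{T'}\underline{y}^\beta$ when $\beta \notin sRes(A)$, and a limiting/twisting argument otherwise — one reduces to computing $\Hom$ between the corresponding rank-one objects $\mco_{T'}\underline{y}^{\beta'}$ and $\mco_{T'}\underline{y}^{\beta}$ (or their pushforwards) on the torus, where $\Hom$ is visibly at most one-dimensional and is spanned by multiplication by the monomial $\underline{y}^{\beta-\beta'}$; transporting back through $\FL$ (which is an equivalence on monodromic objects when $A$ is homogeneous, by Lemma \ref{lem:GKZmono}) sends this generator to right multiplication by $\partial^{\beta-\beta'}$.

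The main obstacle I anticipate is the general (possibly strongly resonant) case: when $\beta$ or $\beta'$ lies in $sRes(A)$ one no longer has the clean Fourier–Laplace identification, and one must argue directly with the graded structure of the GKZ-module. For this I would lean on Saito's classification results: the key point is that $(M^\beta_A)_{\beta'-\beta}$, the graded component in degree $\beta' - \beta$, is annihilated by the box operators and hence is a one-dimensional space spanned by the class of $\partial^{\beta-\beta'}$ — this is essentially the statement that the image of the canonical map from the semigroup ring $S_A$ (in the appropriate degree) generates, combined with the fact that any further relation would have to come from the Euler operators and these are already accounted for. A careful bookkeeping of which monomials $\prod \lambda_i^{c_i}\prod\partial_i^{e_i}$ can have the prescribed $\mbz^d$-degree, modulo the left ideal, closes the argument; I expect this combinatorial step, rather than the formal homological setup, to be where the real work lies, which is why the author is content to cite \cite{SaitoMut3} for it.
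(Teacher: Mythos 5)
Your setup (reduce to tracking $\psi([1])$, observe that the Euler operators pin down the $\mbz^{d}$-degree of the image, and check that right multiplication by $\p^{\beta-\beta'}$ is well defined) agrees with the paper's, but the uniqueness argument has a genuine gap, and it sits exactly where you guessed the work lies. Your route (b) rests on the claim that the Euler-constrained piece $(M^{\beta}_{A})_{-\beta'}=\{[Q]\mid (E_k-\beta'_k)[Q]=0\}$ is one-dimensional, spanned by $[\p^{\beta-\beta'}]$. This is false whenever $n>d$, i.e.\ whenever there are any box operators at all: what Saito's work (the only thing the paper actually cites from \cite{SaitoMut3}) provides is an isomorphism of this space with $\mbc[x_1,\ldots,x_n]/\sum_k\bigl((Ax)_k-\beta'_k\bigr)\mbc[x]$, the coordinate ring of the $(n-d)$-dimensional affine subspace $Ax=\beta'$, via $g\mapsto g(\lambda\p_\lambda)\p^{\beta-\beta'}$. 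You are conflating the degree-$(\beta-\beta')$ piece of the semigroup ring $S_A$ (which is indeed one-dimensional) with that of $M^\beta_A$, which also contains all classes $[f(\lambda_1\p_{\lambda_1},\ldots,\lambda_n\p_{\lambda_n})\p^{\beta-\beta'}]$ and is infinite-dimensional. (There is also a sign slip: with $\deg\p_i=\underline{a}_i$ the forced degree is $\beta-\beta'$, not $\beta'-\beta$.) The actual content of the paper's proof --- not delegated to a citation --- is the step you omit: imposing $\Box_{\underline{l}}\cdot[P]=0$ and using the \emph{injectivity} of Saito's isomorphism to get $f(x+l^{-})-f(x+l^{+})\in\sum_k\bigl((Ax)_k+(Al^{+})_k-\beta'_k\bigr)\mbc[x]$ for every relation $\underline{l}\in\dL$; evaluating at any $\gamma$ with $A\gamma=\beta'$ then shows $f$ is constant on the Zariski-dense coset $\gamma+\dL$ of $\gamma+\ker A$, hence constant modulo the Euler ideal, which gives $[P]=f(\gamma)[\p^{\beta-\beta'}]$.

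Your route (a) is a correct and genuinely different argument where it applies: $h_{A,+}$ is fully faithful (Kashiwara plus open-embedding adjunction) and $\FL$ is a t-exact self-equivalence of the monodromic category, so $\Hom(\mcm^{\beta'}_A,\mcm^{\beta}_A)\simeq\Hom(\mco_{T'}\underline{y}^{\beta'},\mco_{T'}\underline{y}^{\beta})=\mbc\cdot\underline{y}^{\beta-\beta'}$. But this needs $A$ homogeneous \emph{and} $\beta,\beta'\notin sRes(A)$, whereas the proposition is stated for arbitrary $\beta,\beta'\in\mbc^d$ with $\beta-\beta'\in\mbn A$. The hypothesis is not decorative: in the paper's own application (Proposition \ref{prop:seqinvest}) the source parameter is $\widetilde{\beta}'=(-1,\underline{0})$, which lies in $sRes(\widetilde{A})$ when $\mbn\widetilde{A}$ is saturated, so Theorem \ref{thm:SW} does not identify $\FL(\mcm^{\widetilde{\beta}'}_{\widetilde{A}})$ with $h_{\widetilde{A},+}\mco_T\underline{y}^{\widetilde{\beta}'}$ and route (a) breaks down precisely in the case needed. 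The proposed ``limiting/twisting argument otherwise'' is not an argument, since translating $\beta$ by $\mbz^d$ changes the isomorphism class of $\mcm^{\beta}_A$ exactly in the resonant range.
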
 
\begin{proof}
In the course of the proof we will work with the modules of global sections instead of the $\mcd$-modules themselves.
First notice that $\psi$  is determined by the image of $[1] \in M^{\beta'}_{A}$ because $M^{\beta'}_{A}$ is a cyclic left $D$-module. Let $P \in D$ so that $\psi([1]) = [P]$. Define for $\kappa \in \mbc^{d}$ the $\mbc[\lambda_1 \p_{\lambda_1}, \ldots , \lambda_n \p_{\lambda_n}]$-module
\[
(M^{\beta}_{A})_{\kappa} :=  \{ [Q] \in M^{\beta}_{A} \mid (E_k + \kappa_k) \cdot [Q] = 0 \; \text{for all} \; k \in \{1 , \ldots , d\} \}\,.
\]
From \cite[Chapter 4]{SaitoMut3} follows that there exists the following $\mbc[\lambda_1 \p_{\lambda_1}, \ldots , \lambda_n \p_{\lambda_n}]$-module isomorphism for $\beta + \kappa \in \mbn A$:
\begin{align}
\mbc[\lambda_1 \p_{\lambda_1}, \ldots , \lambda_n \p_{\lambda_n}]/ \sum_{k=1}^d (E_k + \kappa_k) \mbc[\lambda_1 \p_{\lambda_1}, \ldots , \lambda_n \p_{\lambda_n}] &\lra (M^{\beta}_{A})_{\kappa} \label{eq:gkzgradiso}\\
g(\lambda \p_{\lambda}) := g(\lambda_1 \p_{\lambda_1}, \ldots  , \lambda_n \p_{\lambda_n}) &\mapsto g(\lambda \p_{\lambda})\p^{\beta + \kappa} \notag
\end{align}

In order that $\psi$ is well-defined, we must have $[P] \in (M^{\beta}_{A})_{- \beta'}$. Therefore $P$ can be chosen to lie in $\mbc[\lambda_1 \p_{\lambda_1}, \ldots , \lambda_n \p_{\lambda_n}] \p^{\beta  - \beta'}$. We will write  $\psi([1]) = [f(\lambda \p_{\lambda})\p^{\beta - \beta'}]$ for short and compute the image of $\Box_{\underline{l}} = \p^{l^{-}} - \p^{l^+}$
\begin{align}
0 &= \psi([\p^{l^{-}} - \p^{l^+}])\notag \\
&= (\p^{l^{-}} - \p^{l^+}) \cdot \psi([1]) = (\p^{l^{-}} - \p^{l^+}) [f(\lambda \p_{\lambda})\p^{\beta - \beta'}] \notag \\
&= [(f(\lambda \p_{\lambda} + l^{-}) \p^{l^{-}} - f(\lambda \p_{\lambda} + l^{+})\p^{l^{+}})\p^{\beta - \beta'}] \notag \\
&= [(f(\lambda \p_{\lambda} + l^{-})  - f(\lambda \p_{\lambda} + l^{+}))\p^{l^+} \p^{\beta - \beta'}] \notag \\
& \in (M^{\beta}_{A})_{Al^+ - \beta'}\, ,
\end{align}
where $A l^+ \in \mbz^{d}$ with $k$-th component $(A l^+)_k := \sum_{i=1}^n a_{ki}(l^+)_i$. Because of \eqref{eq:gkzgradiso} we have
\[
f(\lambda \p_{\lambda} + l^{-})  - f(\lambda \p_{\lambda} + l^{+}) \in \sum_{k=1}^d (E_k + (A l^+)_k - \beta'_k) \mbc[\lambda_1 \p_{\lambda_1}, \ldots , \lambda_n \p_{\lambda_n}] \, .
\]

\noindent Notice that for a relation $l \in \mbn^{n}$ we have
\[
f(\lambda \p_{\lambda})  - f(\lambda \p_{\lambda} + l) \in \sum_{k=1}^d (E_k - \beta'_k) \mbc[\lambda_1 \p_{\lambda_1}, \ldots , \lambda_n \p_{\lambda_n}]\, .
\]
The statement above is a statement in a commutative ring. For better readability we set $x_i = \lambda_i \p_{\lambda_i}$, then the statement above can be expressed as (recall that $E_k = \sum_{i=1}^n a_{ki} \lambda_i \p_{\lambda_i}$)
\[
f(x) - f(x+l) \in \sum_{k=1}^d((A\cdot x)_k -\beta'_k)\mbc[x]\, .
\] 
Because the columns of $A$ generate $\mbz^{d}$, i.e $A$ has full rank, we can find a $\gamma \in \mbc^{d}$ with $A \cdot \gamma = \beta'$. Thus we have
\[
f(\gamma) - f(\gamma +l) = 0 \, .
\]
Since $f$ is a polynomial this means that $f$ has constant value $f(\gamma)$ one the affine subspace $\gamma + \ker(A)$. But this means
\[
f(\gamma +x) \in f(\gamma) +\sum_{k=1}^d(A \cdot x)_k \mbc[x]\, ,
\]
resp.
\[
f(x) \in f(\gamma) +\sum_{k=1}^d((A \cdot x)_k -\beta'_k) \mbc[x]\, .
\]
If we substitute $\lambda \p_{\lambda}$ back, we get
\[
\psi([1]) =[P] =[f(\lambda \p_{\lambda})\p^{\beta - \beta'}] = [f(\gamma)\p^{\beta - \beta'}]\, ,
\] 
where we have used $(E_k- \beta'_k)\p^{\beta - \beta'} =\p^{\beta - \beta'}(E_k- \beta_k)$. This shows the claim.
\end{proof}

\noindent In the rest of this section we will consider special types of GKZ-system.
Let $A$ be a $d \times n$-matrix with $\mbz A = \mbz^d$. We define its homogenization as being the  $d+1 \times n+1$-matrix
\begin{equation}\label{eq:tildeA}
\widetilde{A} := \left(\begin{matrix}1 & 1 & \ldots & 1\\ 0 &&& \\ \vdots & & \text{{\Huge $A$}} & \\ 0 &&& \end{matrix} \right)\, .
\end{equation}
We will consider the GKZ-system $\mcm^{\widetilde{\beta}}_{\widetilde{A}}$ with $\widetilde{\beta} = (\beta_0, \ldots, \beta_d) \in \mbc^{d+1}$ and denote the coordinates of the underlying space as $\lambda_0 ,\ldots, \lambda_m$.

\noindent Notice that the semigroup $\mbn \widetilde{A}$ is always pointed, thus every statement above applies to these kind of matrices.\\

\noindent Later we will need the following lemma.
\begin{lem}\label{lem:betalift}
Let $A$ be a $d \times n$ integer matrix with $\mbz A = \mbz^d$ and let $\beta \in \mbq$. If $\beta \notin sRes(A)$, there exist a $n_\beta \in \mbz$ such that $\widetilde{\beta} = (\beta_0, \beta) \notin sRes(\widetilde{A})$ for all $\beta_0 \in \mbq$ with $\beta_0 \geq n_\beta$.
\end{lem}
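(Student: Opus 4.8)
The plan is to dissect $sRes(\widetilde A)=\bigcup_{j=0}^n sRes_j(\widetilde A)$ one face-contribution at a time and to show that, for a fixed $\beta\in\mbq^d\setminus sRes(A)$, the set $\{\beta_0\in\mbq\mid(\beta_0,\beta)\in sRes(\widetilde A)\}$ is bounded above; any integer $n_\beta$ exceeding this bound then works. Write points of $\mbc^{d+1}$ as $(y_0,y)$, let $\pi(y_0,y)=y$, and recall that every column of $\widetilde A$ has the form $\widetilde a_i=(1,a_i)$ with $a_0:=0$. Using that the modules $S_{\widetilde A}/\langle\p_{\lambda_j}\rangle$ are toric (Example 4.7 of \cite{MillerWaltherMat} applied to $\widetilde A$) together with the analogue of \eqref{eq:quasideg} for $\widetilde A$ and the toric filtration \eqref{toricfiltj}, one writes each $sRes_j(\widetilde A)$ as a finite union of \emph{components} $\bigcup_{l\ge0}\big(-(l+1)\widetilde a_j-\alpha_m-\mbc\widetilde F_m\big)$, where $\widetilde F_m$ is a face of $\mbn\widetilde A$ with $\widetilde a_j\notin\widetilde F_m$ and $\alpha_m\in\mbn\widetilde A$.

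The governing dichotomy is whether $(1,0)\in\mbc\widetilde F_m$. A supporting functional of $\widetilde F_m$ has the form $\ell(y_0,y)=c_0y_0+\langle c,y\rangle$ with $c_0\ge0$ and $\ell\ge0$ on $\mbn\widetilde A$, and one checks that $(1,0)\in\mbc\widetilde F_m\iff c_0=0\iff\widetilde a_0\in\widetilde F_m$; call such faces \emph{type (ii)}, the remaining non-empty ones \emph{type (iii)}, and $\widetilde F_m=\emptyset$ \emph{type (i)}. For type (i) the component is the single point $-(l+1)\widetilde a_j-\alpha_m$, whose $y_0$-coordinate is $\le-1$ since $(\alpha_m)_0\ge0$. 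For type (iii), $\pi$ is injective on $\mbc\widetilde F_m$ (because $0\notin\Conv\{a_i\mid\widetilde a_i\in\widetilde F_m\}$'s affine hull), so there is a linear form $L_m$ on $\pi(\mbc\widetilde F_m)$ with $L_m(a_i)=1$ for $\widetilde a_i\in\widetilde F_m$, and on the component one has $\beta_0=L_m(\beta)+(l+1)(L_m(a_j)-1)+\mathrm{const}_m$ while also $\beta+(l+1)a_j+(\alpha_m)_{1,\ldots,d}\in\pi(\mbc\widetilde F_m)$; if $a_j\notin\pi(\mbc\widetilde F_m)$ this pins $l$ down uniquely, and if $a_j\in\pi(\mbc\widetilde F_m)$ then pairing a representation $a_j=\sum c_i a_i$ with $c$ — whose minimum $m_0=-c_0<0$ is attained on the projected face of the Newton polytope $\Conv(a_1,\ldots,a_n)$ — gives $L_m(a_j)=\langle c,a_j\rangle/m_0<1$; either way $\beta_0$ is bounded above on the component. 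In particular $sRes_0(\widetilde A)$, which has no type (ii) component (there $\widetilde a_j=\widetilde a_0\notin\widetilde F_m$), meets each line $\mbc\times\{\beta\}$ in a bounded-above set.

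The main point is the type (ii) components, which occur only for $j\ge1$ and on which $\beta_0$ is unconstrained; here one must show $\beta$ cannot lie in the projection of the component, i.e.
\[
\bigcup_{l\ge0}\big(-(l+1)a_j-(\alpha_m)_{1,\ldots,d}-\mbc F_m\big)\ \subseteq\ sRes_j(A),\qquad F_m:=\{a_i\mid\widetilde a_i\in\widetilde F_m,\ i\ge1\},
\]
which contradicts $\beta\notin sRes(A)$. One checks that $F_m$ is a face of the cone $\mbn A$ with $a_j\notin F_m$, so $\mbc F_m=qdeg(S_{F_m})$; and since $\widetilde a_0\in\widetilde F_m$, the subquotient $S_{\widetilde F_m}(\alpha_m)$ is the homogenization of $S_{F_m}((\alpha_m)_{1,\ldots,d})$. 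The inclusion then reduces to verifying that, after dehomogenizing the toric filtration of $S_{\widetilde A}/\langle\p_{\lambda_j}\rangle$ via $\p_{\lambda_0}\mapsto1$, the translate $(\alpha_m)_{1,\ldots,d}+\mbc F_m$ — up to a shift by a non-negative multiple of $a_j$ — occurs among the pieces $qdeg(S_{F_{k,j}}(\alpha_{k,j}))$ making up $qdeg(S_A/\langle\p_{\lambda_j}\rangle)$. Establishing this compatibility between the toric filtrations of $S_{\widetilde A}/\langle\p_{\lambda_j}\rangle$ and of $S_A/\langle\p_{\lambda_j}\rangle$ is the step I expect to be most delicate. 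Granting it, one collects the finitely many (possibly $\beta$-dependent) upper bounds coming from the type (i) and type (iii) components over all $j$, sets $n_\beta$ to be one more than their maximum, and concludes.
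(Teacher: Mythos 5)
Your decomposition of $sRes(\widetilde A)$ by faces and the dichotomy $(1,0,\dots,0)\in\mbc\widetilde F_m$ versus $(1,0,\dots,0)\notin\mbc\widetilde F_m$ is sound, and your treatment of the type (i) and type (iii) components is correct: if $\widetilde a_0\notin\widetilde F_m$ then the supporting functional $\ell=(c_0,c)$ has $c_0=\ell(\widetilde a_0)>0$, each affine piece meets the line $\mbc\times\{\beta\}$ in at most one point, and when $a_j\in\pi(\mbc\widetilde F_m)$ the identity $\ell(\widetilde a_j)=c_0(1-L_m(a_j))>0$ forces $L_m(a_j)<1$, so $\beta_0$ decreases to $-\infty$ along the component. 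Specialized to $j=0$ this is exactly how the paper disposes of $sRes_0(\widetilde A)$.

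The proof is nevertheless incomplete, and the gap is at the decisive point. The type (ii) inclusion
$\bigcup_{l\ge0}\bigl(-(l+1)a_j-(\alpha_m)_{1,\dots,d}-\mbc F_m\bigr)\subseteq sRes_j(A)$, which you explicitly ``grant'', is the whole content of the case $j\ge1$: without it you cannot exclude that the entire line $\mbq\times\{\beta\}$ sits inside $sRes_j(\widetilde A)$, since on a type (ii) component $\beta_0$ is unconstrained. Establishing it genuinely requires comparing the toric filtration of $S_{\widetilde A}/\langle\p_{\lambda_j}\rangle$ with that of $S_A/\langle\p_{\lambda_j}\rangle$ (equivalently, showing that a true degree $(s,v)$ of $S_{\widetilde A}/\langle\p_{\lambda_j}\rangle$ lying on a piece whose face contains $\widetilde a_0$ has $v\in qdeg(S_A/\langle\p_{\lambda_j}\rangle)$ up to a shift by $\mbn a_j$), and you supply no argument. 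For comparison, the paper settles all of $j\ge1$ in one line by asserting $qdeg(S_{\widetilde A}/\langle\p_{\lambda_j}\rangle)\subseteq\mbq\times qdeg(S_A/\langle\p_{\lambda_j}\rangle)$, hence $sRes_j(\widetilde A)\subseteq\mbq\times sRes_j(A)$; your granted step is precisely this assertion restricted to the faces containing $\widetilde a_0$. Your instinct that the step is delicate is well founded: the paper's unrestricted inclusion actually fails in general (for $A=(2,5)$ and $j=1$ the point $(2,10)=2\widetilde a_2$ is a true degree of $S_{\widetilde A}/\langle\p_{\lambda_1}\rangle$ because $(1,8)\notin\mbn\widetilde A$, while $qdeg(S_A/\langle\p_{\lambda_1}\rangle)=\{0,5\}$; the offending components are exactly your type (iii) ones, which your boundedness argument handles correctly). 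So the finer decomposition is the right move, but that makes actually proving the type (ii) containment all the more necessary; as written, the proof does not close.
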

\begin{proof}
Fix a $\beta \in \mbq^d$. At first we prove that there exists an $n_\beta \in \mbz$, s.t. for $\beta_0 \geq n_\beta$ the element $(\beta_0, \beta) \notin sRes_0(\widetilde{A})$. For this we have to compute the quasi-degrees $\mbq^{d+1} \cap qdeg(S_{\widetilde{A}}/\p_{\lambda_0})$.
Recall that $\mbq^{d+1} \cap qdeg(S_{\widetilde{A}}/\p_{\lambda_0})$ is a finite union of translates of $\mbq$-linear spans of faces of $\mbq_+ \widetilde{A}$ which do not contain $\underline{\widetilde{a}}_0 = (1,0, \ldots ,0)$ (cf.\eqref{eq:quasideg}). 
Thus we can find an $n_\beta \in \mbz$ so that for every $\beta_0 \geq n_\beta$ the element $(\beta_0, \beta) \notin sRes_0(\widetilde{A}) = -(\mbn +1)\widetilde{\underline{a}}_0 + qdeg(S_{\widetilde{A}}/\p_0)$.

\noindent Now assume additionally that $\beta \notin sRes_j(A)$ for some $j \in \{1, \ldots ,n\}$. Recall that this means
\[
\beta \notin -(\mbn +1) \underline{a}_j + qdeg(S_A / \p_{\lambda_j}).
\] 
Notice that $qdeg(S_{\widetilde{A}}/ \p_{\lambda_j}) \subset \mbq \times qdeg(S_A / \p_{\lambda_j})$. But this means
\begin{align}
sRes_j(\widetilde{A}) &= -(\mbn +1) \underline{\widetilde{a}}_j + qdeg(S_{\widetilde{A}}/\p_{\lambda_j}) \notag \\
&\subset -(\mbn +1)(\mbq \times \underline{a}_j) + (\mbq \times qdeg(S_A / \p_{\lambda_j})) \notag \\
&= \mbq \times sRes_j(A)\, . \notag
\end{align}
But this means that $(\beta_0, \beta) \notin sRes_j(\widetilde{A})$ for any $\beta_0 \in \mbq$. Summarizing we have shown that if $\beta \notin sRes(A)$, then $(\beta_0, \beta) \notin sRes(\widetilde{A})$ for any $\beta_0 \in \mbq$ with $\beta_0 \geq n_\beta$, but this shows the claim.
\end{proof}

\section{Families of Laurent Polynomials and Hypergeometric $\mcd$-modules}\label{sec:LaurentPolGKZ}

In this section we will show that certain (direct sums of) homogeneous GKZ-systems arise as Radon transformations of some $\mcd$-modules on the projective space. This approach to  GKZ-systems is a crucial step in order to endow them with a mixed Hodge module structure. Additionally, this enables us to establish a strong relationship between (these direct sum of) GKZ-systems and the Gau\ss-Manin system of an associated family of Laurent polynomials (Theorem \ref{thm:genseq}), which will give interesting insight into the structure of these Gau\ss-Manin systems.

\noindent Let $B$ be an integer $d \times n$-matrix, where we denote the columns by $\underline{b}_1, \ldots , \underline{b}_n$. We assume that the columns of $B$ generate $\mbq^d$.  

\noindent Using the Smith normal form we can write $B$ as
\[
C \cdot D_1 \cdot D_2 \cdot M
\]
with $C=(\underline{c}_1, \ldots , \underline{c}_{d}) \in Gl(d \times d , \mbz), M=(\underline{m}_1, \ldots , \underline{m}_n) \in GL(n \times n, \mbz)$ and
\[
D_1 \cdot D_2= \left( \begin{array}{c  c  c} e_1 & &  \\ & \ddots & \\ & & e_{d} \end{array} \right) \cdot \left( \begin{array}{c  c  c| c c c} 1 & & & & &\\ & \ddots & &  & 0 & \\ & &1 & & &\end{array} \right)\, .
\]

\noindent We then set
\[
A:= D_2 \cdot M
\]
and we consider its homogenization $\widetilde{A}$ as in  \eqref{eq:tildeA}.

\noindent Furthermore, we associate to the matrix $B$ the following family of Laurent polynomials
\begin{align}
\varphi_{B}: S \times W &\lra \mbc_{\lambda_0} \times W\, , \notag \\
(y_1, \ldots , y_{d}, \lambda_1, \ldots , \lambda_n) &\mapsto (-\sum_{i=1}^n \lambda_i \underline{y}^{\underline{b}_i}, \lambda_1, \ldots , \lambda_n)\, , \notag
\end{align}
where $S := (\mbc^*)^{d}$, $W := \mbc^n$.

\noindent Set $V:= \mbc_{\lambda_0} \times W$.  We will construct a $\mcd_V$-linear morphism with $\mco_V$-free kernel and cokernel between the Gau\ss-Manin system $\mch^0(\varphi_{B+} \mco_{S \times W})$ and a direct sum of GKZ-systems. 

\begin{thm}\label{thm:genseq}
Let $B$ and $\widetilde{A}$ be as above and let $\varphi_{B}: S \times W \lra \mbc_{\lambda_0} \times W$ be the corresponding family Laurent polynomials. Let 
\begin{align}
\sigma : (\mbq / \mbz)^{d+1} &\ra \mbq^{d+1}\setminus sRes(\widetilde{A}) \notag \\
\sigma': (\mbq / \mbz)^{d+1} &\ra \mbq^{d+1} \setminus \mbd sRes(\widetilde{A}) \notag
\end{align}
be sections of the projection $pr: \mbq^{d+1} \ra (\mbq/\mbz)^{d+1}$ and let $I := (\mbz \times \frac{1}{e}\mbz^d) \cap im(\sigma)$ resp. $I' := \mbz \times \frac{1}{e}\mbz^d \cap im(\sigma')$, where $\frac{1}{e}\mbz^d = \frac{1}{e_1}\mbz \times \ldots \times \frac{1}{e_d}\mbz$ and $e =(e_1, \ldots e_d)$ are the elementary divisors of $B$. Then we have the following exact sequences in $M_{rh}(\mcd_V)$:
\begin{align}
&0 \lra H^{d-1}(S,\mbc)\otimes \mco_V \lra \mch^{0}(\varphi_{B,+} \mco_{S \times W}) \lra \bigoplus_{\widetilde{\beta} \in I} \mcm^{\widetilde{\beta}}_{\widetilde{A}} \lra H^{d}(S,\mbc)\otimes \mco_V \lra 0\, , \notag \\
&0 \lra H_{d}(S,\mbc)\otimes \mco_V \lra \bigoplus_{\widetilde{\beta}' \in I'} \mcm^{\widetilde{\beta}'}_{\widetilde{A}}  \lra \mch^{0}(\varphi_{B,\dag} \mco_{S \times W}) \lra H_{d-1}(S,\mbc)\otimes \mco_V \lra 0\, . \notag
\end{align}
\end{thm}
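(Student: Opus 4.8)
The strategy is to express both the Gauß--Manin system and the GKZ-system as two variants of one and the same Radon transform of a rank-one $\mcd$-module on a projective space, using the comparison of d'Agnolo--Eastwood \cite{AE} between the Fourier--Laplace and the Radon transformations.

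\textbf{Reduction to the unimodular case.} Since $B=CD_1A$, the monomial map $\varrho\colon S\to S$ attached to the integer matrix $CD_1$ is a finite étale covering of degree $e_1\cdots e_d$ satisfying $\varphi_B=\varphi_A\circ(\varrho\times\id_W)$. As $\varrho$ is an isogeny of tori with $\ker\varrho\cong\mbz^d/CD_1\mbz^d$ and $(CD_1)^{-1}\mbz^d=\tfrac1e\mbz^d$, one has $\varrho_+\mco_S\simeq\bigoplus_{t\in\frac1e\mbz^d/\mbz^d}\mco_S\underline{w}^{t}$, and therefore
\[\varphi_{B,+}\mco_{S\times W}\simeq\bigoplus_{t\in\frac1e\mbz^d/\mbz^d}\varphi_{A,+}(\mco_S\underline{w}^{t}\boxtimes\mco_W),\]
and likewise with $\dag$ in place of $+$. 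Together with Lemma \ref{lem:betalift} and Lemma \ref{lem:sResnonnormal} (which guarantee that the sections $\sigma,\sigma'$ into $\mbq^{d+1}\setminus sRes(\widetilde A)$, resp.\ $\mbq^{d+1}\setminus\mbd sRes(\widetilde A)$, exist and that $I,I'$ are finite of size $e_1\cdots e_d$), this reduces the theorem to identifying, for a single torsion class $t$, the module $\mch^0(\varphi_{A,+}(\mco_S\underline{w}^{t}))$ with a GKZ-system $\mcm^{\widetilde\beta}_{\widetilde A}$ up to $\mco_V$-free kernel and cokernel, where $\widetilde\beta\in I$ is a representative of the coset $(\ast,t)$; for non-resonant parameters $\mcm^{\widetilde\beta}_{\widetilde A}$ depends only on the class of $\widetilde\beta$ mod $\mbz^{d+1}$, because by Theorem \ref{thm:SW} it equals $\FL(h_{\widetilde A,+}\mco_{T'}\underline{y}^{\widetilde\beta})$ and the latter does.

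\textbf{From GKZ to Radon, and from Radon to Gauß--Manin.} By Theorem \ref{thm:SW} we have $\mcm^{\widetilde\beta}_{\widetilde A}\simeq\FL(h_{\widetilde A,+}\mco_{T'}\underline{y}^{\widetilde\beta})$ for $\widetilde\beta\notin sRes(\widetilde A)$, and by Proposition \ref{prop:dualSW}, $\mcm^{\widetilde\beta'}_{\widetilde A}\simeq\FL(h_{\widetilde A,\dag}\mco_{T'}\underline{y}^{\widetilde\beta'})$ for $\widetilde\beta'\notin\mbd sRes(\widetilde A)$. Because $\widetilde A$ is homogeneous, the image of $h_{\widetilde A}$ is the punctured cone over the torus embedding $j\colon T=(\mbc^*)^d\hookrightarrow\mbp^n$, $\underline w\mapsto[1:\underline w^{\underline a_1}:\dots:\underline w^{\underline a_n}]$ (an embedding since $\mbz A=\mbz^d$), and $h_{\widetilde A,+}\mco_{T'}\underline y^{\widetilde\beta}$ is monodromic (Lemma \ref{lem:GKZmono}) and, away from the origin, is the radial Kummer twist with exponent $\widetilde\beta_0$ of the pullback of $j_+(\mco_T\underline w^{\beta})$, $\beta=(\widetilde\beta_1,\dots,\widetilde\beta_d)$. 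The d'Agnolo--Eastwood comparison then identifies $\FL(h_{\widetilde A,+}\mco_{T'}\underline y^{\widetilde\beta})$ on $V\setminus 0$ with the pullback of the Radon transform $\cR(j_+\mco_T\underline w^{\beta})$ on $(\mbp^n)^\vee$, the projectivisation of $V$, and describes the fibre at the origin through the distinguished triangle relating this Radon transform to the ``constant'' Radon transform (the one using all of $\mbp^n\times(\mbp^n)^\vee$, whose cohomology objects are direct sums of copies of $\mco_V$). On the other hand, the incidence hyperplane $\{\sum_{k=0}^n\mu_k\lambda_k=0\}$ meets $j(T)$ exactly in the fibre of $\varphi_A$ over $(\lambda_0,\lambda)$, so the affine Radon transform of $j_+\mco_T\underline w^{t}$ is precisely the Gauß--Manin complex $\varphi_{A,+}(\mco_S\underline w^{t})$ up to shift (and the proper version gives $\varphi_{A,\dag}$).

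\textbf{Assembling the sequences.} Taking the long exact cohomology sequence of the d'Agnolo--Eastwood triangle and of the triangle comparing the affine and the projective Radon transforms, and using that the difference between them is governed by the cohomology of the very affine variety $S$ versus its hyperplane sections, one obtains that, besides $\mch^0$ of the Gauß--Manin system and $\mcm^{\widetilde\beta}_{\widetilde A}$, the only non-vanishing contributions are the constant $\mcd_V$-modules $H^{d-1}(S,\mbc)\otimes\mco_V$ and $H^{d}(S,\mbc)\otimes\mco_V$; summing over $I$ gives the first four-term exact sequence. The second follows by repeating the argument with the proper direct images $h_{\widetilde A,\dag}$, $\varphi_{B,\dag}$, and the set $\mbd sRes(\widetilde A)$ in place of $sRes(\widetilde A)$ (using Proposition \ref{prop:dualSW}); on the level of the relevant subquotients it is also the holonomic dual of the first, via $\mbd\mcm^{\widetilde\beta}_{\widetilde A}\simeq\mcm^{\widetilde\beta'}_{\widetilde A}$ (Proposition \ref{prop:dualGKZ}) and $\mbd(H^k(S,\mbc)\otimes\mco_V)=H_k(S,\mbc)\otimes\mco_V$.

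\textbf{Main obstacle.} The hard part is the last step: pinning down the precise cohomological degrees in the two Radon triangles and checking that the ``error'' objects are exactly the two constant modules $H^{d-1}(S,\mbc)\otimes\mco_V$ and $H^{d}(S,\mbc)\otimes\mco_V$, placed at the two ends of a four-term sequence — in particular that nothing spurious survives in the degree carrying $\mch^0(\varphi_{B,+}\mco_{S\times W})$, and that the connecting morphisms have the claimed kernels and images. This requires a careful analysis of the behaviour of $j_+\mco_T\underline w^{\beta}$ (resp.\ $j_\dag\mco_T\underline w^{\beta}$) along the boundary divisors of the toric compactification of $T$ inside $\mbp^n$, and it is precisely here that the hypothesis $\widetilde\beta\notin sRes(\widetilde A)$ (resp.\ $\widetilde\beta'\notin\mbd sRes(\widetilde A)$) enters in an essential way: by Theorem \ref{thm:SW} (resp.\ Proposition \ref{prop:dualSW}) it ensures that the total Fourier--Laplace transform of $h_{\widetilde A,+}\mco_{T'}\underline y^{\widetilde\beta}$ is the GKZ-module itself, and not merely a complex with extra cohomology in other degrees.
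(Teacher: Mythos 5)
Your proposal is correct and, at its core, follows the same route as the paper: express the GKZ-side as $\FL(h_{\widetilde A,+}\mco_{T'}\underline{y}^{\widetilde\beta})$ via Theorem \ref{thm:SW} (resp.\ Proposition \ref{prop:dualSW}), use the d'Agnolo--Eastwood comparison (Proposition \ref{lem:AERadonFourier}) to rewrite this as $\mcr^\circ_c(g_+\mco_S)$, identify $\mcr(g_+\mco_S)$ with $\varphi_{B,+}\mco_{S\times W}$ and $\mcr_{cst}(g_+\mco_S)$ with the constant modules, and take the long exact sequences of the two adjunction triangles of Proposition \ref{prop:Radontriangle}. The one genuine organizational difference is where the splitting indexed by $I$ is performed: the paper keeps $\varphi_{B,+}\mco_{S\times W}$ intact and instead factors $h=h_{\widetilde A}\circ f_{D_1}\circ f_C$ on the $(d+1)$-torus $T$, decomposing $h_+\mco_T\simeq\bigoplus_{\gamma\in I_e}h_{\widetilde A,+}(\mco_T\underline{y}^{(0,\gamma)})$ \emph{after} the Radon/Fourier identification (Lemma \ref{lem:FLcompare}), whereas you split the family itself through the isogeny $\varrho$ of $S$. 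Your version works but requires one extra check that you leave implicit: for a nontrivial torsion class $t$ the Kummer module $\mco_S\underline{w}^{t}$ has vanishing de Rham cohomology on the torus $S$, so the constant Radon transform of $g_+(\mco_S\underline{w}^{t})$ vanishes and only the summand $t=0$ contributes the two outer terms; otherwise the summed sequence would a priori have outer terms $\bigoplus_t H^{*}(S,\mcl_t)\otimes\mco_V$ rather than $H^{*}(S,\mbc)\otimes\mco_V$. Finally, the ``careful analysis along the boundary divisors'' that you flag as the main obstacle is not actually needed: concentration of $\FL(h_{\widetilde A,+}\mco_{T'}\underline{y}^{\widetilde\beta})$ in degree $0$ follows from $h_{\widetilde A}$ being affine and quasi-finite (so $h_{\widetilde A,+}$ is exact, Lemma \ref{lem:Step1}) together with the exactness of $\FL$, the four-term shape comes from the right exactness of $\varphi_{B,+}$ ($\varphi_B$ is affine) and its dual statement as in Proposition \ref{prop:pervcohGM}, and the non-strong-resonance hypothesis enters only in the final identification of the Fourier--Laplace transform with the GKZ-module --- exactly as you say in your closing sentence.
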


\begin{rem}
A priori the existence of a section $\sigma : (\mbq / \mbz)^{d+1} \ra \mbq^{d+1}\setminus sRes(\widetilde{A})$ is not clear, however using Lemma \ref{lem:sResnonnormal} we can guarantee the existence of such a section with image contained in $\delta'_{\widetilde{A}} + (\{0\} \times [0,1)^{d})$ for some $\delta'_{\widetilde{A}} \in \delta_{\widetilde{A}} + \mbq_+ \widetilde{A}$.
\end{rem}

\noindent In the case where the columns of $B$ generate $\mbz^{d}$, i.e. in the case $B =A$, we can be more precise with respect to the allowed parameter vector $\beta$.

\begin{cor}\label{cor:exseqA}
Let $A$ be an integer $d \times n$-matrix with $\mbz A = \mbz^d$ and let $\varphi_A$ be the corresponding family Laurent polynomials. For every $\widetilde{\beta}, \widetilde{\beta}' \in  \mbz^{d+1}$ with $\widetilde{\beta} \notin sRes(\widetilde{A})$ resp. $\widetilde{\beta}' \notin \mbd sRes(\widetilde{A})$ we have the following exact sequences in $M_{rh}(\mcd_V)$:
\begin{equation} \label{eq:exseq1}
0 \lra H^{d-1}(S,\mbc)\otimes \mco_V \lra \mch^0(\varphi_{A,+} \mco_{S \times W}) \lra  \mcm^{\widetilde{\beta}}_{\widetilde{A}} \lra H^{d}(S,\mbc)\otimes \mco_V \lra 0\, .
\end{equation}
\begin{equation}\label{eq:exseq2}
0 \lra H_d(S,\mbc)\otimes \mco_V \lra \mcm^{\widetilde{\beta}'}_{\widetilde{A}}  \lra \mch^0(\varphi_{A,\dag} \mco_{S \times W}) \lra H_{d-1}(S,\mbc)\otimes \mco_V \lra 0\, .
\end{equation}
%For every $\widetilde{\beta} \in \mbz^{d+1}$ with $\widetilde{\beta} \notin sRes(\widetilde{A})$ we have the following exact sequence:
If in addition $\mbn \widetilde{A}$ is saturated, the set $\{\widetilde{\beta} \in \mbz^{d+1} \mid \widetilde{\beta} \notin sRes(\widetilde{A})\}$ is precisely $\mbn \widetilde{A}$.
\end{cor}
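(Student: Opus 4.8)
The plan is to establish the two four-term exact sequences first and then read off the saturated statement from them together with Lemma \ref{lem:sResnormal}. I would proceed as announced in the introduction: use the comparison theorem of d'Agnolo--Eastwood to rewrite $\FL(\mcm^{\widetilde\beta}_{\widetilde A})$, which by Theorem \ref{thm:SW} (using that $\mbn\widetilde A$ is always pointed) equals $h_{\widetilde A,+}\mco_{T'}\underline y^{\widetilde\beta}$ for $\widetilde\beta\notin sRes(\widetilde A)$, as a Radon transformation of a monodromic $\mcd$-module on a projective space; similarly for the dual side using Proposition \ref{prop:dualSW} and $\widetilde\beta'\notin\mbd sRes(\widetilde A)$, which produces the proper direct image $h_{\widetilde A,\dag}\mco_{T'}\underline y^{\widetilde\beta'}$. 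On the other side, the Gau\ss-Manin system $\mch^0(\varphi_{A,+}\mco_{S\times W})$ should likewise be expressible via a Radon-type transform of a related $\mcd$-module attached to the graph of the universal Laurent polynomial $-\sum\lambda_i\underline y^{\underline b_i}$. The exact sequence comes from the standard distinguished triangle relating the affine Radon transform, the projective Radon transform, and the ``constant'' term supported on the hyperplane at infinity (this is where the $H^\ast(S,\mbc)\otimes\mco_V$ factors enter, as the cohomology of the torus fibre). Taking the long exact cohomology sequence of that triangle and identifying the middle terms with the GKZ-system and the Gau\ss-Manin system respectively yields \eqref{eq:exseq1}, and dualizing gives \eqref{eq:exseq2}.

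For the final claim, assume in addition that $\mbn\widetilde A$ is saturated. Since $\widetilde A$ is homogeneous by construction (its first row is $(1,\dots,1)$), Lemma \ref{lem:sResnormal} applies verbatim to $\widetilde A$ and gives $\mbz^{d+1}\setminus sRes(\widetilde A)=\mbn\widetilde A$. That is exactly the assertion. So this part is essentially immediate once one notices that the homogenization $\widetilde A$ always satisfies the homogeneity hypothesis of Lemma \ref{lem:sResnormal}, and that saturatedness of $\mbn\widetilde A$ is precisely the hypothesis of that lemma's second half.

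The first (integrality) half of Corollary \ref{cor:exseqA}, i.e.\ that one may take a single $\widetilde\beta\in\mbz^{d+1}$ rather than the family indexed by $I$ in Theorem \ref{thm:genseq}, follows because when $B=A$ generates $\mbz^d$ the elementary divisors $e_i$ are all $1$, so $\tfrac1e\mbz^d=\mbz^d$ and the index sets $I$, $I'$ collapse: a section of $\mbq^{d+1}\to(\mbq/\mbz)^{d+1}$ lands, on the sublattice $\mbz^{d+1}$, in a single coset representative, and for any admissible integer parameter $\widetilde\beta$ the isomorphism \eqref{eq:betansRes}-type argument (Theorem \ref{thm:SW} and Proposition \ref{prop:dualGKZ}) shows the resulting GKZ-system is independent of the choice of representative. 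Hence the direct sum in Theorem \ref{thm:genseq} reduces to the single summand $\mcm^{\widetilde\beta}_{\widetilde A}$.

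I expect the main obstacle to be the identification of the Gau\ss-Manin system of $\varphi_{A}$ with the correct Radon transform and the precise bookkeeping of the two outer terms --- i.e.\ verifying that the kernel and cokernel are exactly $H^{d-1}(S,\mbc)\otimes\mco_V$ and $H^d(S,\mbc)\otimes\mco_V$ (and their homological duals in the second sequence), with the right $\mco_V$-module structure and no hidden extension or grading shift. This is a compatibility-of-transforms computation rather than a conceptual difficulty, but it is the step where errors are easiest to make; the final saturated statement itself is then a one-line consequence of Lemma \ref{lem:sResnormal}.
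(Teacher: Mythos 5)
Your proposal is correct and, in its essential content, coincides with the paper's proof: the paper simply observes that since $\mbz A=\mbz^d$ the elementary divisors are all $1$, so the index sets $I$, $I'$ of Theorem \ref{thm:genseq} collapse to a single integral representative (any $\widetilde{\beta}\notin sRes(\widetilde{A})$ giving an isomorphic system), and the saturated statement is exactly the second part of Lemma \ref{lem:sResnormal}. Your first paragraph re-derives Theorem \ref{thm:genseq} via the Radon/Fourier--Laplace comparison, which is unnecessary here since that theorem is already available; the actual proof is just your third and second paragraphs.
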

\begin{proof}
The statements follow from the fact, that the elementary divisors of the matrix $A$ are all equal to $1$. The last statement is the second statement of Lemma \ref{lem:sResnormal}.
\end{proof}

In order to derive the exact sequences in Theorem \ref{thm:genseq} we have to recall briefly the definition and some simple facts about the Radon transformation for $\mcd$-modules.

\noindent  On the level of $\mcd$-modules the Radon transform was discussed by \cite{Brylinski} and some variants were later discussed in \cite{AE}.

\noindent Consider the following diagram
$$
\xymatrix{ && U \ar[drr]^{\pi_2^U} \ar[dll]_{\pi_1^U} \ar@{^(->}[d]^{j_U}&& \\ \mbp(V') && \mbp(V') \times V \ar[ll]_{\pi_1} \ar[rr]^{\pi_2} && V\; , \\ && Z \ar[ull]^{\pi_1^Z} \ar@{^(->}[u]_{i_Z} \ar[rru]_{\pi_2^Z} &&}
 %\xymatrix@!0{
%          &&&& \mbp^m \times (\mbp^m)^\vee  \ar@(l,u)[dddddllll]_{p_1} \ar@(r,u)[dddddrrrr]^{p_2}\\ \\
%          &&&& \widetilde{Z}=V(\sum_{i=0}^m w_i\lambda_i) \ar@{^(->}[uu]^{i} \ar[dddllll]^{p'_1} \ar[dddrrrr]^{p'_2}\\ \\ \\
%          \mbp^m   &&&&&&&&  (\mbp^m)^\vee}
$$
where we denote by $Z$ the hypersurface given by the equation $\sum_{i=0}^n \mu_i \lambda_i = 0$ and by $U$ its complement in $\mbp(V') \times V$. Consider the following functors from $D^b_{rh}(\mcd_{\mbp(V')})$ to $D^b_{rh}(\mcd_{V})$:
\begin{align}
\mcr(M)&:= \pi^Z_{2 +}\, (\pi^Z_1)^+ M \simeq \pi_{2 +}\, i_{Z +}\, i_{Z}^+\, \pi_1^+ M  \, , \notag \\
\mcr^\circ(M)&:= \pi^U_{2 +}\, (\pi^U_1)^+ M \simeq \pi_{2+}\, j_{U +} j^{+}_U \pi_1^+ M\, , \notag \\
\mcr^\circ_c(M)&:= \pi^U_{2 \dag}\, (\pi^U_1)^+ M \simeq \pi_{2 +}\, j_{U \dag}\, j^{+}_U \pi_1^+ M\, , \notag \\
\mcr_{cst}(M)&:= \pi_{2 +}\, (\pi_1)^+ M\, . \notag
\end{align}

\noindent Notice that the various Radon transformations give rise to the following triangles:
\begin{prop}\label{prop:Radontriangle}Let $M \in D^b_{rh}(\mcd_{\mbp(V')})$, we have
\begin{align}
\mcr(M)[-1] \lra \mcr_{cst}(M) \lra \mcr^\circ(M) \overset{+1}{\lra}\, , \notag \\
\mcr^\circ_{c}(M) \lra \mcr_{cst}(M) \lra \mcr(M)[1] \overset{+1}{\lra}\, , \notag
\end{align}
where the second triangle is dual to the first.
\end{prop}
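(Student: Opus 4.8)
The plan is to obtain both triangles by feeding the object $\pi_1^+M$ into the two standard localisation triangles attached to the decomposition $P=U\sqcup Z$ of $P:=\mbp(V')\times V$, and then pushing forward by $\pi_{2+}$. Since $Z$ is a smooth hypersurface of $P$, for every $\mcn\in D^b_{rh}(\mcd_P)$ one has the distinguished triangles
\[
i_{Z+}i_Z^\dag\mcn[-1] \lra \mcn \lra j_{U+}j_U^+\mcn \overset{+1}{\lra}\,,
\]
\[
j_{U\dag}j_U^+\mcn \lra \mcn \lra i_{Z+}i_Z^+\mcn[1] \overset{+1}{\lra}\,,
\]
which are the $\mcd$-module incarnations, via Riemann--Hilbert, of the adjunction triangles $i_*i^!\to\mathrm{id}\to Rj_*j^*\to$ and $j_!j^*\to\mathrm{id}\to i_*i^*\to$; the shifts $[-1]$, $[1]$ come from $\mathrm{codim}_P Z=1$ (cf. \cite{Hotta}, and \cite{AE} in the Radon-theoretic setting).

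First I would substitute $\mcn=\pi_1^+M$ (which lies in $D^b_{rh}(\mcd_P)$ since $\pi_1^+$ preserves regular holonomicity) and apply the triangulated functor $\pi_{2+}$. By the definitions of $\mcr_{cst}$, $\mcr^\circ$ and $\mcr^\circ_c$ the middle and right-hand terms become $\mcr_{cst}(M)$, $\mcr^\circ(M)$ and $\mcr^\circ_c(M)$, so that it remains to identify $\pi_{2+}i_{Z+}i_Z^\dag\pi_1^+M$ with $\mcr(M)=\pi_{2+}i_{Z+}i_Z^+\pi_1^+M$. This is the key step, and it follows from the identification
\[
i_Z^\dag(\pi_1^+M)\simeq i_Z^+(\pi_1^+M)\,.
\]
Indeed, $Z$ is non-characteristic for $\pi_1^+M$: the characteristic variety of $\pi_1^+M$ is pulled back from $\mbp(V')$ and so consists of covectors with vanishing component along $V$, whereas the conormal $T^*_ZP$ is spanned by $d\langle\underline{\mu},\underline{\lambda}\rangle$, whose $\underline{\lambda}$-component $\sum_i\mu_i\,d\lambda_i$ never vanishes on $\mbp(V')\times V$. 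Equivalently, $\pi_1^Z\colon Z\to\mbp(V')$ is smooth (a rank-$n$ vector bundle), hence $(\pi_1^Z)^+=(\pi_1^Z)^\dag$, and combined with $\pi_1^+=\pi_1^\dag$ this yields $i_Z^+\pi_1^+=(\pi_1^Z)^+=(\pi_1^Z)^\dag=i_Z^\dag\pi_1^\dag=i_Z^\dag\pi_1^+$. With this, the two pushed-forward triangles are exactly $\mcr(M)[-1]\to\mcr_{cst}(M)\to\mcr^\circ(M)\overset{+1}{\to}$ and $\mcr^\circ_c(M)\to\mcr_{cst}(M)\to\mcr(M)[1]\overset{+1}{\to}$.

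For the claim that the second triangle is dual to the first, I would apply the contravariant exact functor $\mbd$ to the first triangle, written for $\mbd M$ in place of $M$. Using $\pi_{2\dag}=\pi_{2+}$ (properness of $\pi_2$), $\pi_1^\dag=\pi_1^+$ and $j_U^\dag=j_U^+$ (smoothness of $\pi_1$ and $j_U$), $i_{Z\dag}=i_{Z+}$ (closedness of $i_Z$), the exchange rules $\mbd\circ f_+=f_\dag\circ\mbd$, $\mbd\circ f^+=f^\dag\circ\mbd$, and once more $i_Z^\dag\pi_1^+=i_Z^+\pi_1^+$, one checks $\mbd\mcr^\circ(\mbd M)\simeq\mcr^\circ_c(M)$, $\mbd\mcr_{cst}(\mbd M)\simeq\mcr_{cst}(M)$ and $\mbd\mcr(\mbd M)\simeq\mcr(M)$; thus $\mbd$ carries the first triangle for $\mbd M$ onto the second triangle for $M$. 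The main obstacle I anticipate is precisely this cohomological bookkeeping: one must recognise that the $i^\dag$, resp.\ $i^+[1]$, produced by the codimension-one localisation triangles collapses to $i^+[-1]$, resp.\ $i^+[1]$, on the particular object $\pi_1^+M$ --- which is exactly what the non-characteristicity of $Z$ for $\pi_1^+M$ (equivalently, smoothness of $\pi_1^Z$) delivers; without it neither the stated shifts nor the duality would come out right.
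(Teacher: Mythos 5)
Your proof is correct and follows essentially the same route as the paper: the localisation triangles for the decomposition $\mbp(V')\times V = U\sqcup Z$ applied to $\pi_1^+M$ and pushed forward by $\pi_{2+}$, with the second triangle obtained from the first by duality. The one point you treat more carefully than the paper is the replacement of $i_Z^\dag$ by $i_Z^+$ on $\pi_1^+M$ (the paper's displayed adjunction triangle writes $(i_Z)^+[-1]$ where the general statement requires $(i_Z)^\dag[-1]$); your justification via smoothness of $\pi_1^Z$, equivalently non-characteristicity of $Z$ for $\pi_1^+M$, is exactly what makes the stated form of $\mcr(M)$ legitimate.
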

\begin{proof}
The first triangle follows from the adjunction triangle 
\[
(i_Z)_+ (i_Z)^+[-1] \lra id \lra (j_U)_+ (j_U)^+ \overset{+1}{\lra}\; .
\]
The second triangle is dual to the first.
\end{proof}

\noindent The following proposition relates the Fourier and Radon transformations introduced above, and will be quite useful in the next chapter.

\begin{prop}{\cite[Proposition 1]{AE}}\label{lem:AERadonFourier}
Let $V$ be a $\mbc$-vector space, $V'$ its dual space,
$p: Bl_0(V'):=\mbv(\mco_{\mbp(V')}(-1)) \rightarrow V'$ the blowup of $V'$ at the origin, and consider the following diagram
$$
\begin{xy}
\xymatrix{ & V' \ar@{.>}[rrd]^{\FL} & & \\ Bl_0(V') \ar[ru]^{p} \ar[rd]_q & V'\setminus\{0\} \ar[u]^{j} \ar[d]_{\pi} \ar[l] &  & V\, .  \\ & \mbp(V') \ar@/^1pc/@{.>}[rru]^{\mcr^\circ_{(c)}} \ar@/_1pc/@{.>}[rru]_{\mcr_{(cst)}} &  & }
\end{xy}
$$
Let $M \in D^b_{rh}(\mcd_{\mbp(V')})$. Then we have
\begin{align}
\mcr(M) &\simeq \FL(p_+ q^+ M)\, , \notag \\
\mcr^\circ_c(M) &\simeq \FL(j_{+}\pi^+ M)\, , \notag\\
\mcr^\circ(M) &\simeq \FL(j_{\dag}\pi^+ M)\, . \notag
\end{align}
\end{prop}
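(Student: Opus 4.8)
This is \cite[Proposition~1]{AE}; the plan is to reconstruct their argument, whose mechanism is that integrating the Fourier kernel $e^{-\langle\underline\mu,\underline\lambda\rangle}$ along the $\mbc^{\ast}$-fibres of the blow-up $q\colon Bl_{0}(V')\to\mbp(V')$ turns it into the ``delta'' module along the incidence hypersurface $Z$, which is exactly the geometry building the Radon transforms. Write $k\colon V'\setminus\{0\}\hookrightarrow Bl_{0}(V')$ for the open inclusion of the complement of the exceptional divisor (so $p\circ k=j$, $q\circ k=\pi$), and $i_{E}\colon E\hookrightarrow Bl_{0}(V')$ for the exceptional divisor, which is the zero section and is identified with $\mbp(V')$ via $q$; note $p\circ i_{E}$ is the constant map to $0\in V'$. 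All three modules $p_{+}q^{+}M$, $j_{+}\pi^{+}M$, $j_{\dag}\pi^{+}M$ are monodromic, so $\FL$ applies to them and lands in $D^{b}_{rh}(\mcd_{V})$.

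First I would establish $\FL(p_{+}q^{+}M)\simeq\mcr(M)$. Set $r:=q\times\id_{V}\colon Bl_{0}(V')\times V\to\mbp(V')\times V$; using base change along the cartesian square built from $p\times\id_{V}$ and $p$, the projection formula for $p\times\id_{V}$ applied to the pulled-back kernel $(p\times\id_{V})^{+}\mcl=\mco\,e^{-\langle v,\lambda\rangle}$, and then the projection formula for $r$, one rewrites
\[
\FL(p_{+}q^{+}M)\;\simeq\;\pi_{2+}\bigl(\pi_{1}^{+}M\otimes_{\mco}r_{+}(\mco\,e^{-\langle v,\lambda\rangle})\bigr).
\]
The key point is the computation $r_{+}(\mco\,e^{-\langle v,\lambda\rangle})\simeq i_{Z+}\mco_{Z}$ up to a shift: along a line-bundle fibre $\mbc_{t}$ of $r$ the kernel is $e^{-tc}$ with $c$ the corresponding fibrewise-linear function, and the relative de Rham complex of $\mco\,e^{-tc}$ along $r$ has cohomology equal to $\mco$ annihilated by $c$ — i.e.\ the structure module of $\{c=0\}=Z$. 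This is exactly the statement that $\FL_{\mbc}(\mco_{\mbc})$ is the delta module at the origin, relativized over $\mbp(V')\times V$. Plugging this in and using the projection formula for the closed immersion $i_{Z}$, namely $\pi_{1}^{+}M\otimes i_{Z+}\mco_{Z}\simeq i_{Z+}i_{Z}^{+}\pi_{1}^{+}M$, yields $\FL(p_{+}q^{+}M)\simeq\pi_{2+}i_{Z+}i_{Z}^{+}\pi_{1}^{+}M=\mcr(M)$, with the shift pinned down by a dimension count.

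For the other two isomorphisms I would argue by triangles rather than compute directly, since the $\mbc^{\ast}$-fibre of $\pi$ together with the irregularity of the kernel makes a naive fibrewise computation misleading. Apply $p_{+}$ and then the exact functor $\FL$ to the attachment triangle $i_{E+}i_{E}^{\dag}q^{+}M\to q^{+}M\to k_{+}\pi^{+}M\overset{+1}{\lra}$ on $Bl_{0}(V')$; since $p\circ k=j$ this produces
\[
\FL\bigl((p\circ i_{E})_{+}\,i_{E}^{\dag}q^{+}M\bigr)\;\lra\;\FL(p_{+}q^{+}M)\;\lra\;\FL(j_{+}\pi^{+}M)\;\overset{+1}{\lra}\;.
\]
Because $q\circ i_{E}=\id$ one has $i_{E}^{\dag}q^{+}M\simeq M$ up to shift on $E\cong\mbp(V')$, and $p\circ i_{E}$ factors through the point $0\in V'$, so the first term is the Fourier transform of a module supported at the origin, hence $\mco_{V}\otimes R\Gamma_{\mathrm{dR}}(\mbp(V'),M)$ up to shift; one recognizes this as $\mcr_{cst}(M)$ via the easy identity $\mcr_{cst}(M)=\pi_{2+}\pi_{1}^{+}M\simeq\mco_{V}\otimes R\Gamma_{\mathrm{dR}}(\mbp(V'),M)$. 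Together with the previous step the triangle becomes $\mcr_{cst}(M)\to\mcr(M)\to\FL(j_{+}\pi^{+}M)\overset{+1}{\lra}$; comparing it with the triangle $\mcr^{\circ}_{c}(M)\to\mcr_{cst}(M)\to\mcr(M)[1]\overset{+1}{\lra}$ of Proposition~\ref{prop:Radontriangle} and using uniqueness of cones forces $\FL(j_{+}\pi^{+}M)\simeq\mcr^{\circ}_{c}(M)$. Finally $\FL(j_{\dag}\pi^{+}M)\simeq\mcr^{\circ}(M)$ follows by applying holonomic duality: $\mbd$ interchanges $j_{+}$ with $j_{\dag}$ and $\mcr^{\circ}_{c}$ with $\mcr^{\circ}$, it commutes with $\pi^{+}$ up to shift since $\pi$ is smooth, and $\mbd\circ\FL\simeq\FL\circ\mbd$ on monodromic complexes.

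The main obstacle is bookkeeping rather than a single idea: (i) carrying out the fibrewise pushforward $r_{+}(\mco\,e^{-\langle v,\lambda\rangle})\simeq i_{Z+}\mco_{Z}[\,\bullet\,]$ honestly through the relative de Rham complex, so that one truly gets $\mco_{Z}$ and not an infinitesimal thickening or a spurious summand; (ii) justifying the base-change and projection-formula identities in a setting where $j$ and $\pi$ need not be affine, hence working throughout in the derived category; and (iii) tracking every shift coming from the smooth inverse images and from $\dim\mbp(V')=\dim V-1$, as well as checking that the map $\mcr_{cst}(M)\to\mcr(M)$ produced by the triangle above agrees with the one in Proposition~\ref{prop:Radontriangle}, so that the final isomorphisms hold on the nose and not merely up to shift.
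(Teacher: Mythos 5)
The paper does not actually prove this statement: it is imported verbatim as \cite[Proposition 1]{AE}, so there is no internal proof to compare against. Your reconstruction is nevertheless the right mechanism and is essentially the d'Agnolo--Eastwood argument: after base change and two projection formulas, $\FL(p_+q^+M)\simeq \pi_{2+}\bigl(\pi_1^+M\otimes r_+(p\times\id)^+\mcl\bigr)$, and the fibrewise statement that the Fourier transform of $\mco_{\mbc}$ is the delta module at the origin, relativized over $\mbp(V')\times V$, identifies $r_+(p\times\id)^+\mcl$ with $i_{Z+}\mco_Z$; the projection formula for the closed immersion $i_Z$ then gives $\mcr(M)$. The duality step at the end is also clean: in this paper's conventions $\pi^\dag=\pi^+$ exactly for the smooth projection $\pi$ (this identity is used in the proof of Lemma \ref{lem:Step1}), $\mbd$ interchanges $j_+$ with $j_\dag$ and $\mcr^\circ_c$ with $\mcr^\circ$, and $\mbd\circ\FL\simeq\FL\circ\mbd$ on monodromic complexes, so no shift discrepancy survives.

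Two points in the middle step need repair rather than mere bookkeeping. First, with the local cohomology triangle in the form the paper uses, $i_{E+}i_E^+[-1]\ra\id\ra k_+k^+\overset{+1}{\lra}$, and using $q\circ i_E=\id$, $p\circ i_E=i_0\circ a_{\mbp(V')}$ and $\FL(i_{0+}W)\simeq\mco_V\otimes W$, the first term of your pushed-forward triangle is $\mcr_{cst}(M)[-1]$, not $\mcr_{cst}(M)$. This is what makes the comparison work: rotating and shifting the second triangle of Proposition \ref{prop:Radontriangle} gives $\mcr_{cst}(M)[-1]\ra\mcr(M)\ra\mcr^\circ_c(M)\overset{+1}{\lra}$, so the third term is $\mcr^\circ_c(M)$ on the nose; with the triangle as you wrote it the same comparison would force $\mcr^\circ_c(M)[1]$, which contradicts the statement. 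Second, ``uniqueness of cones'' is not enough by itself: you must check that, under the isomorphisms of the first two terms, the arrow $\mcr_{cst}(M)[-1]\ra\mcr(M)$ produced by the blow-up triangle agrees with the boundary map of the Radon triangle (both are induced by the adjunction for the pair $(Z,U)$, but this has to be traced through $\FL$ and the base changes). The cleanest way to avoid this verification is to prove the second isomorphism by the same kernel computation as the first: the identical base-change and projection-formula manipulations give $\FL(j_+\pi^+M)\simeq\pi_{2+}\bigl(\pi_1^+M\otimes r'_+(j\times\id_V)^+\mcl\bigr)$ with $r'=\pi\times\id_V$ the $\mbc^*$-bundle over $\mbp(V')\times V$, and the fibrewise de Rham cohomology of $e^{-tc}$ on $\mbc^*$ (one-dimensional in top degree for $c\neq0$, the full cohomology of $\mbc^*$ for $c=0$) identifies this kernel with the one defining $\mcr^\circ_c$; the triangle comparison then becomes a corollary rather than an ingredient.
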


\noindent Consider the following map
\begin{align}
g: S &\lra \mbp^n\, , \notag \\
(y_1, \ldots , y_d) &\mapsto (1:\underline{y}^{\underline{b}_1}: \ldots : \underline{y}^{\underline{b}_n})\, , \label{eq:embed}
\end{align}

\noindent In order to construct the morphisms between the (proper) Gau\ss-Manin system of $\varphi_B$ and the direct sum of GKZ-systems, consider the following exact triangles  in $D^b_{rh}(V)$ from Proposition \ref{prop:Radontriangle}:
\begin{align}
\mcr(g_\dag\, \mco_S)[-1] \lra \mcr_{cst}(g_\dag\, \mco_S) \lra \mcr^\circ(g_\dag\, \mco_S) \overset{+1}{\lra} &\\
\mcr^\circ_{c}(g_+\, \mco_S) \lra \mcr_{cst}(g_+\, \mco_S) \lra \mcr(g_+\, \mco_S)[1]   \overset{+1}{\lra}&\, . \notag
\end{align}
In the following we will calculate each term of the triangles above.\\

\noindent Consider the following map
\begin{align}
h: T &\lra V' \label{def:maph} \\
(y_0, \ldots , y_d) &\mapsto (y_0, y_0 \cdot \underline{y}^{\underline{b}_1}, \ldots , y_0 \cdot \underline{y}^{\underline{b}_n})\, , \notag
\end{align}
where $T \simeq (\mbc^\ast)^{d+1}$, $V' \simeq \mbc^{n+1}$.
%and $h$  corresponds to the map \eqref{eq:bigtorusembedding} associated to the matrix $\widetilde{A}$. Define $h^e := h \circ \widetilde{\pi}^e$.\\

\noindent These maps give rise to the following diagram, where the lower square is cartesian.
$$
\xymatrix{
 &  V'  \\
T \ar[ur]^{h} \ar[d]_{\pi_T}  \ar[r]_{\widetilde{h}} &  V' \ar[d]_{\pi} \ar[u]_{j} \setminus \{0\}\\
S \ar[r]^{g} &  \mbp(V')}
$$
Here, the map $\pi$ is the canonical projection and $j$ is the canonical inclusion. The map $\pi_T$ is the projection given by
\begin{align}
\pi_T : T &\lra S\; , \notag \\
(y_0,y_1,\ldots , y_d) &\mapsto (y_1, \ldots ,y_d)\; . \notag
\end{align}

\noindent In the next lemma we compare various $\mcd$-modules constructed from the $\mcd$-modules $\mco_S$ resp. $\mco_T := \pi_T^+ \mco_{S}$, living on the $d$-dimensional torus $S$ resp. $d+1$-dimensional torus $T$.
\begin{lem}\label{lem:Step1}$ $\\[-1.5 em]
\begin{enumerate}
\item
The functors $h_+$ and $h_\dag$ are exact.
\item
We have isomorphisms
\begin{align}
j_+ \pi^{+} g_+ \mco_{S} &\simeq h_+ \mco_{T}\, , \notag \\
j_\dag \pi^{+} g_\dag \mco_{S} &\simeq h_\dag \mco_{T}\, , \notag
\end{align}
in the category of monodromic, $\mcd$-modules on $V'$.
\end{enumerate}
\end{lem}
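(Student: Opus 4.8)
The plan is to reduce both assertions to a factorization of the monomial map $h$ into elementary building blocks, combined with base change along the Cartesian square relating $h$, $g$ and $\pi$.

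For the exactness in (1), I would factor $h$ as follows. Since $y_0$ is invertible on $T$, the map $h$ has image in $(\dC^*)^{n+1}\subset\dC^*\times\dC^n\subset\dC^{n+1}=V'$, and $h\colon T\to(\dC^*)^{n+1}$ is the homomorphism of algebraic tori attached to the lattice map $\dZ^{n+1}\to\dZ^{d+1}$, $\underline{e}_0\mapsto(1,0,\dots,0)$, $\underline{e}_i\mapsto(1,\underline{b}_i)$. As the columns of $B$ span $\dQ^d$, this lattice map has image of finite index $e_1\cdots e_d$, so $h$ factors as
\[
T\;\xrightarrow{\;\varrho\;}\;T_0\;\hookrightarrow\;(\dC^*)^{n+1}\;\hookrightarrow\;\dC^*\times\dC^n\;\hookrightarrow\;V',
\]
where $\varrho$ is a finite étale isogeny onto a subtorus $T_0$, the second arrow is a closed immersion of tori, and the last two are the standard affine open immersions. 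Now $\varrho_+$ is exact (a finite étale morphism is affine and its transfer bimodule is $\mcd_T$-flat, so $\varrho_+=\varrho_*$ is exact), the closed immersion has exact direct image by Kashiwara's theorem, and for an affine open immersion $j$ the functor $j_+=Rj_*=j_*$ is exact on $\mcd$-modules; hence $h_+$ is exact, and $h_\dag=\mbd\circ h_+\circ\mbd$ is exact as well because $\mbd$ preserves single holonomic modules. The same factorization applies to $g$, which is the composite of a finite isogeny, a closed immersion of tori, and affine open immersions ($S\hookrightarrow(\dC^*)^n\hookrightarrow\dC^n\hookrightarrow\mbp(V')$, the last inclusion being the standard chart $\{\mu_0\neq0\}$); thus $g$ is affine and $g_+$, $g_\dag$ are exact, so $g_+\mco_S$ and $g_\dag\mco_S$ are single regular holonomic $\mcd_{\mbp(V')}$-modules.

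For (2), I would first note that the square
$$\xymatrix{T\ar[r]^{\widetilde h}\ar[d]_{\pi_T}&V'\setminus\{0\}\ar[d]^{\pi}\\ S\ar[r]^{g}&\mbp(V')}$$
is Cartesian: a point of $V'\setminus\{0\}$ lying over $g(\underline y)=(1:\underline y^{\underline b_1}:\dots:\underline y^{\underline b_n})$ is exactly $y_0\cdot(1,\underline y^{\underline b_1},\dots,\underline y^{\underline b_n})$ with $y_0\in\dC^*$, which recovers $T=(\dC^*)^{d+1}$ together with its two projections. Since $\pi$ is the projection of the $\dC^*$-bundle $\dV(\mco_{\mbp(V')}(-1))\setminus\{0\}$, it is smooth and Zariski-locally a projection, so base change along $\pi$ yields $\pi^+g_+\mco_S\simeq\widetilde h_+\pi_T^+\mco_S\simeq\widetilde h_+\mco_T$ (using $\pi_T^+\mco_S=\mco_T$, as $\pi_T$ is smooth). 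Applying $j_+$ and functoriality of direct images,
\[
j_+\,\pi^+g_+\mco_S\;\simeq\;j_+\widetilde h_+\mco_T\;\simeq\;(j\circ\widetilde h)_+\mco_T\;=\;h_+\mco_T,
\]
which by (1) is a single regular holonomic $\mcd_{V'}$-module; it is monodromic since its restriction to $V'\setminus\{0\}$ is $\pi^+(g_+\mco_S)$ with $g_+\mco_S\in D^b_{rh}(\mcd_{\mbp(V')})$ (Remark \ref{rem:mon-dmod}) — equivalently, because $h$ is equivariant for the scaling $\dC^*$-action on $V'$, which lifts to the action scaling $y_0$ on $T$. The second isomorphism is obtained by applying $\mbd$ to the first: using $\mbd\mco_S=\mco_S$, $\mbd\mco_T=\mco_T$, $\mbd\circ f_+=f_\dag\circ\mbd$ for all maps in sight, and the fact that $\mbd$ commutes with $\pi^+$ because $\pi$ is smooth (so $\pi^\dag$ and $\pi^+$ coincide on modules), the left-hand side becomes $\mbd(h_+\mco_T)=h_\dag\mco_T$ and the right-hand side becomes $j_\dag\pi^+g_\dag\mco_S$; alternatively one runs the dual base change directly in the same square.

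No step is genuinely hard, but two points deserve attention. The morphism $j\colon V'\setminus\{0\}\hookrightarrow V'$ is \emph{not} affine, so $j_+$ is not exact in general; it is exactly the factorization of $h$ in (1) which forces $j_+\pi^+g_+\mco_S=h_+\mco_T$ to be concentrated in a single degree, and this interplay is really the content of combining (1) and (2). Moreover one must keep track of the finite isogeny $\varrho$, which is present whenever the columns of $B$ generate only a finite-index sublattice of $\dZ^d$; it is harmless here (finite morphisms have exact direct image), but it is the reason Lemma \ref{lem:Step1} and Theorem \ref{thm:genseq} are phrased for an arbitrary $B$ together with the elementary divisors, rather than only for the case $B=A$.
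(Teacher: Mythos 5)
Your proof is correct and follows essentially the same route as the paper: part (2) is verbatim the paper's argument (Cartesian square, base change along $\pi$ using $\pi_T^+\mco_S\simeq\mco_T$, then duality with $\pi^\dag=\pi^+$), and your justification of monodromicity via the restriction to $V'\setminus\{0\}$ matches Remark \ref{rem:mon-dmod}. The only difference is in part (1), where you prove exactness by explicitly factoring $h$ into a finite isogeny, a closed immersion and affine open immersions, whereas the paper simply cites the general fact that $f_+$ is exact for quasi-finite affine morphisms (\cite[VI, Proposition 8.1]{Borel}); your factorization is a valid, self-contained substitute.
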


\begin{proof}$ $\\[-1 em]
\begin{enumerate}
\item  The exactness of $h_+$ follows from the fact that the map $h$ is an quasi-finite, affine and from \cite[VI,Proposition 8.1]{Borel}. The exactness of $h_\dag$ follows by duality. 

\item To prove the second point, observe that the following diagram is cartesian:
$$
\xymatrix{
T  \ar[d]_{\pi_T}  \ar[r]^{\widetilde{h}}
&  V'\setminus \{0\} \ar[d]_{\pi}   & \\
S \ar[r]^{g} &  \mbp(V')\, .& }
$$
Using base change with respect to $\pi$ (see e.g. \cite[Theorem 1.7.3]{Hotta}) we get
\[
\pi^+ g_+ \mco_{S} \simeq \widetilde{h}_+  \pi_T^+ \mco_{S} \simeq \widetilde{h}_+ \mco_T \, ,
\]
where we have used that  $\pi_T^{+} \,\mco_{S} \simeq \mco_{T}$.\\

From this follows
\[
h_+ \mco_T \simeq j_+ \widetilde{h}_+ \mco_T \simeq j_+ \pi^+ g_+ \mco_{S}\, .
\]
The second isomorphism follows by duality and the fact that $\pi^\dag = \pi^+$.
%\item The fact that  $h_+ \mco_{T}$ and $h_\dag \mco_{T}$ are  monodromic follows from the isomorphisms in $(2)$ and Remark \ref{rem:mon-dmod}.
\end{enumerate}
\end{proof}

\noindent In the next proposition we compare the direct image of $\mco_{S \times W}$ under $\varphi_B$ with the Radon transform of $g_+ \mco_{S}$. Here and in the following we will identify $V$ with $\mbc_{\lambda_0} \times W$.
\newpage
\begin{prop}\label{lem:Step2}$ $\\[-1 em]
\begin{enumerate}
\item
Let $\varphi_B: S \times W \rightarrow \mbc_{\lambda_0} \times W $ be the family of Laurent polynomials defined above. Then we have the following isomorphisms in $D^b_{rh}(\mcd_V)$
\begin{align}
\mathcal{R}( g_+ \mco_{S}) &\simeq \varphi_{B,+}  \mco_{S \times W}\, , \notag \\
\mathcal{R}( g_\dag \mco_{S}) &\simeq \varphi_{B,\dag}  \mco_{S \times W}\, . \notag
\end{align}
\item There are isomorphisms
\begin{align}
\mch^{i}(\mcr_{cst}(g_+ \mco_{S})) &\simeq H^{d+i}(S, \mbc)\otimes\mco_V\, , \notag \\
\mch^{i}(\mcr_{cst}(g_\dag \mco_{S})) &\simeq H_{d-i}(S, \mbc)\otimes\mco_V\,  \notag
\end{align}
in $D^b_{rh}(\mcd_V)$.
\item
There are isomorphisms
\begin{align}
\mcr^\circ_c(g_+ \mco_{S}) &\simeq \FL(h_+ \mco_{T})\, , \notag \\
\mcr^\circ (g_\dag \mco_{S}) &\simeq \FL(h_\dag \mco_{T})  \notag
\end{align}
in $D^b_{rh}(\mcd_V)$.
\end{enumerate}
\end{prop}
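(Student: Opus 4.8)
The plan is to prove the three parts of Proposition \ref{lem:Step2} in turn, using the geometry set up by the maps $g$, $h$, $\varphi_B$ together with the cartesian diagrams already established in Lemma \ref{lem:Step1} and the Fourier--Radon comparison of Proposition \ref{lem:AERadonFourier}.

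\textbf{Part (3).} First I would dispose of the last assertion, since it is essentially formal. By definition $\mcr^\circ_c(g_+\mco_S) \simeq \FL(j_+\pi^+ g_+\mco_S)$ and $\mcr^\circ(g_\dag\mco_S)\simeq \FL(j_\dag\pi^+ g_\dag\mco_S)$ from Proposition \ref{lem:AERadonFourier} (with $V'$ here playing the role of the space with coordinates $\underline{\mu}$). Now Lemma \ref{lem:Step1}(2) gives $j_+\pi^+ g_+\mco_S \simeq h_+\mco_T$ and $j_\dag\pi^+ g_\dag\mco_S \simeq h_\dag\mco_T$, so substituting yields the two displayed isomorphisms immediately. (One should double-check that the identification of the incidence-variety picture of $\mcr^\circ_{(c)}$ with the blow-up picture is exactly the one in \cite{AE}, but that is the content of Proposition \ref{lem:AERadonFourier} which we may cite.)

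\textbf{Part (1).} The point is to recognize the Radon transform $\mcr(g_+\mco_S) = \pi_{2+}\,i_{Z+}\,i_Z^+\,\pi_1^+(g_+\mco_S)$ as a relative de Rham / direct image of $\mco_{S\times W}$ along the family $\varphi_B$. I would factor this as follows: base-change $\pi_1^+ g_+\mco_S$ along $\pi_1: \mbp(V')\times V \to \mbp(V')$ to get $(g\times\id)_+\mco_{S\times V}$ on $\mbp^n\times V$; then the incidence hypersurface $Z\subset\mbp^n\times V$ pulls back along $g\times\id$ to the hypersurface in $S\times V$ cut out by $\lambda_0 + \sum_{i=1}^n\lambda_i\underline{y}^{\underline{b}_i}=0$ (using that $g$ sends $(y)$ to $(1:\underline{y}^{\underline b_1}:\cdots:\underline{y}^{\underline b_n})$, so $\mu_0\lambda_0+\cdots+\mu_n\lambda_n$ restricts to $\lambda_0+\sum\lambda_i\underline{y}^{\underline b_i}$). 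That hypersurface is precisely the graph of the function $(y,\lambda)\mapsto -\sum\lambda_i\underline{y}^{\underline b_i}$ sitting over $S\times W$, and its further projection $\pi_2^Z$ down to $V=\mbc_{\lambda_0}\times W$ becomes exactly $\varphi_B$ under the identification of the graph with $S\times W$. Assembling the base-change isomorphisms (Hotta--Takeuchi--Tanisaki \cite[Theorem 1.7.3]{Hotta}) and the fact that a closed embedding followed by projection off its graph realizes the pushforward along the function, one gets $\mcr(g_+\mco_S)\simeq \varphi_{B,+}\mco_{S\times W}$; the $\dag$-version follows by applying $\mbd$ and using $\mcr(g_\dag\mco_S)=\mbd\,\mcr^?(\,\cdot\,)\mbd$-type compatibilities, or more directly by noting the dual incidence triangle in Proposition \ref{prop:Radontriangle} and $\varphi_{B,\dag}=\mbd\varphi_{B,+}\mbd$.

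\textbf{Part (2).} Here $\mcr_{cst}(g_+\mco_S)=\pi_{2+}\pi_1^+(g_+\mco_S)$. Since $\pi_2:\mbp(V')\times V\to V$ is the (smooth, proper) projection with fibre $\mbp^n$ and $\pi_1^+ g_+\mco_S = (g\times\id)_+\mco_{S\times V}$ by base change, this is $(p_2)_+(g\times\id)_+\mco_{S\times V}$ where $p_2:S\times V\to V$ is the projection; hence $\mcr_{cst}(g_+\mco_S)\simeq (p_2)_+\mco_{S\times V}$, which by Künneth for the projection $S\times V\to V$ is $\big(\bigoplus_i H^{d+i}(S,\mbc)\otimes\mco_V[-i]\big)$, giving $\mch^i(\mcr_{cst}(g_+\mco_S))\simeq H^{d+i}(S,\mbc)\otimes\mco_V$ as claimed (the shift by $d=\dim S$ coming from the $\mcd$-module normalization of pushforward). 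The $\dag$-statement is the dual, replacing cohomology $H^{d+i}(S,\mbc)$ by homology $H_{d-i}(S,\mbc)$.

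\textbf{Main obstacle.} The routine parts are the base-change and Künneth manipulations; the step that needs genuine care is Part (1), specifically checking that the scheme-theoretic pullback of the incidence hypersurface $Z=\{\sum\mu_i\lambda_i=0\}$ under $g\times\id$ really is (reduced and) equal to the graph of $-\sum\lambda_i\underline{y}^{\underline b_i}$, and that the resulting map to $V$ matches $\varphi_B$ on the nose including the sign convention, so that $i_{Z+}i_Z^+$ on the $S\times V$ side is the same operation as ``restrict to the graph and push forward along $\varphi_B$''. One must also be careful that $g$ has image in the open torus of $\mbp^n$ so that $Z$ pulls back to a smooth hypersurface in $S\times V$ and no extra components at infinity appear; this is where the assumption that the columns of $B$ generate $\mbq^d$ (so $g$ is an immersion of the torus) is used. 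Once this identification is pinned down, everything else is bookkeeping with the four Radon functors and the triangles of Proposition \ref{prop:Radontriangle}.
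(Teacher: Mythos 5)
Your proof is correct and takes essentially the same route as the paper: part (3) is the cited combination of Proposition \ref{lem:AERadonFourier} with Lemma \ref{lem:Step1}(2), part (1) rests on base change along $\pi_1^Z$ plus the identification of the pullback of the incidence hypersurface $Z$ with the graph of $-\sum_{i}\lambda_i\underline{y}^{\underline{b}_i}$ over $S\times W$, and part (2) is the same computation (the paper base-changes over the point to get $a_V^+ a_{S+}\mco_S$ rather than pushing forward along $S\times V\to V$ and invoking K\"unneth, but these are interchangeable). The concern you flag about the pullback of $Z$ being the reduced graph is handled in the paper exactly as you describe, via the explicit isomorphism $\widetilde{i}:S\times W\to\Gamma$.
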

\begin{proof}
\begin{enumerate}
\item
Consider the following diagram, where the square is cartesian
$$
\xymatrix{S\times W \ar[r]^{\widetilde{i}}& \Gamma \ar[d]_\eta \ar[r]^\kappa & Z \ar[d]^{\pi_1^Z} \ar[r]^{\pi_2^Z} & V \\ & S \ar[r]^{g}  & \mbp(V') &  & \qquad .}
$$
Recall that the hypersurface $Z$ in $\mbp(V') \times V$ is given by $\sum_{i=0}^n \lambda_i \mu_i =0$ and $\Gamma$ is the fibered product of $S$ and $Z$. The map $g:S \ra \mbp(V')$ is given by
\begin{align}
g: S &\lra \mbp(V')\, , \notag \\
(y_1, \ldots , y_d) &\mapsto (1: \underline{y}^{\underline{b}_1}: \ldots : \underline{y}^{\underline{b}_n})\, . \notag
\end{align}
 Thus $\Gamma$ is the smooth hypersurface in $S \times V$ given by $\lambda_0 + \sum_{i=1}^n \lambda_i \underline{y}^{\underline{b}_i}=0$. Notice that we have an isomorphism $\widetilde{i}: S \times W \lra \Gamma$ given by
\[
(y_1, \ldots , y_d,\lambda_1, \ldots , \lambda_n) \mapsto  (y_1, \ldots , y_d, -\sum_{i=1}^n \lambda_i \underline{y}^{\underline{b}_i},\lambda_1, \ldots , \lambda_n)\,.
\]
\noindent We have
\[
 \mcr(g_+ \mco_{S}) \simeq  \pi_{2 +}^Z\, (\pi_1^{Z})^+\, g_+ \,\mco_{S} \simeq \pi_{2+}^Z \kappa_+\,  \eta^{+}\,  \mco_{S}
\]
by base change with respect to the map $\pi_1^Z$.

\noindent Notice that $\pi_2^Z \circ \kappa \circ \widetilde{i} = \varphi_{B}$ by the definition of $\Gamma$. Using the fact that $\eta^{+}\, \mco_{S} \simeq \widetilde{i}_+ \mco_{S \times W}$, we obtain
\[
\mathcal{R}( g_+ \mco_{S}) \simeq \varphi_{B,+}\,  \mco_{S \times W}\, .
\]
The second statement is just the dual statement of the first.
\item Consider the cartesian diagram
\[
\begin{xy}
\xymatrix{   & \mbp(V') \times V \ar[rd]_{\pi_2} \ar[ld]^{\pi_1} & \\   \mbp(V') \ar[dr]^{a_\mbp} & & V\, . \ar[dl]_{a_V}\\ & pt &}
\end{xy}
\]
We have
\[
\mcr_{cst}( g_+ \mco_{S}) \simeq \pi_{2 +}\, \pi_1^{+} g_+ \mco_{S} \simeq a_V^+\, a_{\mbp +}\,  g_+ \mco_{S} \simeq a_V^+ a_{S+} \mco_S
\]
by base change with respect to $a_V$.
We get
\[
\mch^{i} \mcr_{cst}( g_+ \mco_{S}) \simeq H^{d+i}(S,\mbc) \otimes \mco_V\,.
\]
For the second statement we have $\mcr_{cst}(g_\dag\mco_S) \simeq a_V^+ a_{S \dag} \mco_S$, which gives
\[
\mch^{i}\mcr_{cst}(g_\dag \mco_S) \simeq H^{d+i}_c(S, \mbc) \otimes \mco_V \simeq H_{d-i}(S, \mbc) \otimes \mco_V\, ,
\]
where the last isomorphism follows from Poincar\'{e}-Verdier duality.
\item
The first statement follows from the second isomorphism in Proposition \ref{lem:AERadonFourier} and \ref{lem:Step1} (2). The second statement is again the dual of the first.
\end{enumerate}
\end{proof}

\noindent Recall the following triangles in $D^b_{rh}(V)$ from above:
\begin{align}\label{eq:Radtriangle}
\mcr(g_\dag\, \mco_S)[-1] \lra \mcr_{cst}(g_\dag\, \mco_S) \lra \mcr^\circ(g_\dag\, \mco_S) \overset{+1}{\lra}, &\\
\mcr^\circ_{c}(g_+\, \mco_S) \lra \mcr_{cst}(g_+\, \mco_S) \lra \mcr(g_+\, \mco_S)[1]   \overset{+1}{\lra}&\, . \notag
\end{align}
\noindent Using Proposition \ref{lem:Step2}, this will enable us to extract information about the cohomology of the (proper) direct image of $\varphi_{B}$.

\begin{prop}\label{prop:pervcohGM}Let $\varphi_{B}:S \times W \lra V$ be the family of Laurent polynomials defined above.
\begin{enumerate}
\item $\mch^k (\varphi_{B,+} \mco_{S \times W}) = 0$ for $k \notin [-d+1, 0]$ and $\mch^k( \varphi_{B,\dag} \mco_{S \times W}) = 0$ for $k \notin [0, d-1]$.
\item $\mch^k( \varphi_{B,+} \mco_{S \times W})$ is isomorphic to the free $\mco_V$-module  $H^{d-1+k}(S, \mbc) \otimes \mco_V$ for $k \in [-d+1, -1]$ and\\
$\mch^k( \varphi_{B,\dag} \mco_{S \times W})$ is isomorphic to the free $\mco_V$-module $H_{d-1-k}(S, \mbc) \otimes \mco_V$ for $k \in [1, d-1]$.
\item There are the following exact sequences in $M_{rh}(\mcd_V)$:
\begin{align}
&0 \lra H^{d-1}(S,\mbc)\otimes \mco_V \lra \mch^{0}( \varphi_{B,+} \mco_{S \times W}) \lra \FL(h_+ \mco_{T}) \lra H^d(S,\mbc)\otimes \mco_V \lra 0\,, \notag \\
&0 \lra H_d(S,\mbc) \otimes \mco_V \lra \FL(h_\dag \mco_T) \lra \mch^{0}(\varphi_{B,\dag} \mco_{S \times W}) \lra H_{d-1}(S,\mbc) \otimes \mco_V \lra 0\,. \notag
\end{align}
\end{enumerate}
\end{prop}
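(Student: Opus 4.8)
The plan is to read off all three parts from the long exact cohomology sequences of the two Radon triangles \eqref{eq:Radtriangle}, after substituting the identifications of Proposition \ref{lem:Step2}. In the second triangle $\mcr^\circ_c(g_+\mco_S)\to\mcr_{cst}(g_+\mco_S)\to\mcr(g_+\mco_S)[1]\overset{+1}{\to}$ we have $\mcr(g_+\mco_S)\simeq\varphi_{B,+}\mco_{S\times W}$, $\mch^i(\mcr_{cst}(g_+\mco_S))\simeq H^{d+i}(S,\mbc)\otimes\mco_V$ and $\mcr^\circ_c(g_+\mco_S)\simeq\FL(h_+\mco_T)$, and dually in the first triangle. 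Taking cohomology sheaves will express each $\mch^k(\varphi_{B,+}\mco_{S\times W})$ in terms of the cohomology of $S$ and of $\FL(h_+\mco_T)$ (and similarly for $\varphi_{B,\dag}$), provided we first pin down two auxiliary facts.

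The first auxiliary fact is that $\FL(h_+\mco_T)$ and $\FL(h_\dag\mco_T)$ are concentrated in cohomological degree $0$. By Lemma \ref{lem:Step1}(1) the functors $h_+$ and $h_\dag$ are exact, so $h_+\mco_T$ and $h_\dag\mco_T$ are single $\mcd_{V'}$-modules. Moreover $h$ is equivariant for the $\mbc^\ast$-action scaling all coordinates of $V'$, via the action on $T$ scaling the coordinate $y_0$, and $\mco_T=\pi_T^+\mco_S$ is equivariant; hence $h_+\mco_T$ and $h_\dag\mco_T$ are monodromic. Since $\FL$ is $t$-exact on the category of monodromic complexes (Brylinski's theorem), $\FL(h_+\mco_T)$ and $\FL(h_\dag\mco_T)$ are single $\mcd_V$-modules. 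In particular $\mch^i(\mcr^\circ_c(g_+\mco_S))=\mch^i(\mcr^\circ(g_\dag\mco_S))=0$ for $i\neq0$.

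The second auxiliary fact is the bound $\mch^k(\varphi_{B,+}\mco_{S\times W})=0$ for $k>0$; here the geometry of $\varphi_B$ enters rather than just the formal Radon calculus. As in the proof of Proposition \ref{lem:Step2}(1), $\varphi_B$ factors as $S\times W\xrightarrow{\widetilde{i}}\Gamma\xrightarrow{i_\Gamma}S\times V\xrightarrow{\mathrm{pr}_V}V$, where $\widetilde{i}$ is an isomorphism, $i_\Gamma$ is the closed embedding of the smooth hypersurface $\Gamma$, and $\mathrm{pr}_V$ is the projection, which is smooth and affine of relative dimension $d=\dim S$. Thus $\varphi_{B,+}\mco_{S\times W}\simeq\mathrm{pr}_{V,+}\bigl(i_{\Gamma,+}\widetilde{i}_+\mco_{S\times W}\bigr)$ is the direct image under $\mathrm{pr}_V$ of a single $\mcd_{S\times V}$-module, which is computed by $\mathrm{pr}_{V,*}$ of the relative de Rham complex (concentrated in degrees $[-d,0]$); since $\mathrm{pr}_V$ is affine there are no higher direct images, so $\mch^k(\varphi_{B,+}\mco_{S\times W})=0$ for $k\notin[-d,0]$, in particular for $k>0$. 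Now the long exact sequence of the second triangle gives everything: for $k\notin[-d+1,0]$ the flanking terms $H^{d+k-1}(S,\mbc)\otimes\mco_V$ and $\mch^{\bullet}(\FL(h_+\mco_T))$ vanish, whence $\mch^k(\varphi_{B,+}\mco_{S\times W})=0$ for $k\leq-d$ directly and for $k\geq1$ after also using the second auxiliary fact in degree $1$; for $k\in[-d+1,-1]$ the sequence degenerates to $0\to H^{d-1+k}(S,\mbc)\otimes\mco_V\to\mch^k(\varphi_{B,+}\mco_{S\times W})\to0$; and the segment around degree $0$ reads
\[
0\to H^{d-1}(S,\mbc)\otimes\mco_V\to\mch^0(\varphi_{B,+}\mco_{S\times W})\to\FL(h_+\mco_T)\to H^d(S,\mbc)\otimes\mco_V\to\mch^1(\varphi_{B,+}\mco_{S\times W})\to0,
\]
with $\mch^1(\varphi_{B,+}\mco_{S\times W})=0$ by the second auxiliary fact. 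This yields (1), (2) and the first exact sequence of (3) for $\varphi_{B,+}$. The corresponding statements for $\varphi_{B,\dag}$ follow by applying holonomic duality $\mbd$ (exact and contravariant on $M_{rh}(\mcd_V)$) to the statements just proved, using $\mbd\mco_{S\times W}\simeq\mco_{S\times W}$, $\mbd\bigl(H^j(S,\mbc)\otimes\mco_V\bigr)\simeq H_j(S,\mbc)\otimes\mco_V$ and $\mbd\FL(h_+\mco_T)\simeq\FL(\mbd h_+\mco_T)\simeq\FL(h_\dag\mco_T)$ (or, alternatively, by the same computation run on the first triangle).

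The main obstacle is the second auxiliary fact, that is, the vanishing $\mch^1(\varphi_{B,+}\mco_{S\times W})=0$ (equivalently $\mch^{-1}(\varphi_{B,\dag}\mco_{S\times W})=0$): the Radon-transform machinery alone only identifies this module as the cokernel of the map $\FL(h_+\mco_T)\to H^d(S,\mbc)\otimes\mco_V$, and one genuinely needs the affineness of $\varphi_B$ (seen via the factorization through the affine projection $\mathrm{pr}_V$) to conclude it is zero. Everything else is bookkeeping with the long exact sequences, once the monodromicity of $h_+\mco_T$ and the identifications of Proposition \ref{lem:Step2} are in hand.
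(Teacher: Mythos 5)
Your proof is correct and follows essentially the same route as the paper: both arguments take the long exact cohomology sequences of the two Radon triangles, substitute the identifications of Proposition \ref{lem:Step2}, use the degree-zero concentration of $\FL(h_+\mco_T)$ and $\FL(h_\dag\mco_T)$ (via exactness of $h_{+}$, $h_\dag$ and of $\FL$), and invoke the affineness of $\varphi_B$ precisely to kill $\mch^1(\varphi_{B,+}\mco_{S\times W})$, which is indeed the one point the Radon formalism alone does not give. The only cosmetic differences are that you route the $t$-exactness of $\FL$ through monodromicity and Brylinski's theorem (the paper uses the plain exactness of $\FL$ on affine space) and obtain the $\dag$-statements by dualizing rather than by running the first triangle, both of which are equally valid.
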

\begin{proof}
Notice that we have
\begin{align}
\mch^i \mcr^\circ_c(g_+ \mco_{S}) \simeq \mch^{i} \FL(h_+ \mco_{T}) = 0 \quad \text{for} \quad {i \neq 0}\, , \label{eq:Ropencomp} \\
\mch^i \mcr^\circ(g_\dag \mco_{S}) \simeq \mch^{i} \FL(h_\dag \mco_{T}) = 0 \quad \text{for} \quad {i \neq 0}\, , \label{eq:Ropen}
\end{align}
by Proposition \ref{lem:Step2}(3), the exactness of $h_+$ resp. $h_\dag$ (cf. Lemma \ref{lem:Step1} (1)) and the exactness of the Fourier-Laplace transformation. Additionally the following holds:
\begin{align}
\mcr(g_+ \mco_{S}) \simeq \varphi_{B,+} \mco_{S \times W} \in D^{\leq 0}_{rh}(\mcd_V)\, , \label{eq:CohomGM} \\
\mcr(g_\dag \mco_{S}) \simeq \varphi_{B,\dag} \mco_{S \times W} \in D^{\geq 0}_{rh}(\mcd_V)\, , \label{eq:CohompropGM}
\end{align}
where the isomorphisms hold by Proposition \ref{lem:Step2}(1) and the statement about the cohomology of $\varphi_{B,+} \mco_{S \times W}$ holds because $\varphi_{B}$ is affine and therefore $\varphi_{B,+}$ is right exact. The claim about the cohomology of $\varphi_{B,\dag} \mco_{S \times W}$ follows by duality.\\  

\noindent Now we take the long exact cohomology sequence of the two triangles in (\ref{eq:Radtriangle}). For the first triangle we get
\[
0 \lra \mch^{-1}(\mcr_{cst}(g_+\mco_S)) \lra \mch^{0} (\mcr(g_+\mco_S)) \lra \mch^{0}(\mcr^\circ_c(g_+\mco_S)) \lra \mch^{0}(\mcr_{cst}(g_+\mco_S)) \lra 0
\]
and
\[
\mch^{i-1}(\mcr_{cst}(g_+ \mco_S)) \simeq \mch^i(\mcr(g_+ \mco_S)) \quad \text{for} \quad i \leq -1 \, ,
\]
because of \eqref{eq:Ropencomp} and \eqref{eq:CohomGM}. For the second triangle we get
\[
0 \ra \mch^{0}(\mcr_{cst}(g_\dag \mco_S)) \ra \mch^{0}(\mcr^\circ(g_\dag \mco_S)) \ra \mch^{0}(\mcr(g_\dag \mco_S)) \ra \mch^{1}(\mcr_{cst}(g_\dag \mco_S)) \ra 0
\]
and
\[
\mch^{i}(\mcr(g_\dag \mco_S)) \simeq \mch^{i+1}(\mcr_{cst}(g_\dag \mco_S)) \quad \text{for} \quad i \geq 1\, ,
\]
because of \eqref{eq:Ropen} and \eqref{eq:CohompropGM}. Applying Proposition \ref{lem:Step2} to the single terms shows the claims.
\end{proof}

\noindent In order to relate $\FL(h_+ \mco_T)$ resp. $\FL(h_\dag \mco_T)$  to GKZ-systems, we need the following lemma.
\begin{lem}\label{lem:FLcompare}
There are the following isomorphisms in $D^b_{rh}(\mcd_V)$:
\begin{align}
\FL(h_+ \mco_T) &\simeq \bigoplus_{\gamma \in I_e} \FL(h_{\widetilde{A},+} (\mco_T \cdot \underline{y}^{(0,\gamma)}))\, , \notag \\
\FL(h_\dag \mco_T) &\simeq \bigoplus_{\gamma \in I_e} \FL(h_{\widetilde{A},\dag} (\mco_T \cdot \underline{y}^{(0,\gamma)}))\, ,  \notag
\end{align}
where 
\begin{align}
h_{\widetilde{A}}: T \lra V'\, , \notag \\
(y_0,\ldots,y_d) &\mapsto (\underline{y}^{\underline{\widetilde{a}}_0}, \ldots,\underline{y}^{\underline{\widetilde{a}}_n}) \notag
\end{align}
and $I_e$ is the set $\prod_{k=1}^d \frac{([0,e_k-1] \cap \mbn)}{e_k} \subset \mbq^d$, where $e_1, \ldots , e_d$ are the elementary divisors of the matrix $B$.
\end{lem}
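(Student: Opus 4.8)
The plan is to factor the map $h: T \to V'$ through a finite covering of $T$ composed with the monomial map $h_{\widetilde A}$, and then decompose the direct image of $\mco_T$ into eigensheaves under the corresponding deck group action. Recall that $B = C \cdot D_1 \cdot D_2 \cdot M$ with $C \in GL(d,\mbz)$, $M \in GL(n,\mbz)$, and $D_1 = \diag(e_1,\ldots,e_d)$; we set $A = D_2 \cdot M$, so that the columns $\underline{a}_i$ of $A$ and $\underline{b}_i$ of $B$ are related by $\underline{b}_i = C \cdot D_1 \cdot \underline{a}_i$. The map $h$ sends $(y_0,\ldots,y_d) \mapsto (y_0, y_0 \underline{y}^{\underline b_1}, \ldots, y_0 \underline{y}^{\underline b_n})$, while $h_{\widetilde A}$ sends $(y_0,\ldots,y_d) \mapsto (\underline{y}^{\underline{\widetilde a}_0}, \ldots, \underline{y}^{\underline{\widetilde a}_n}) = (y_0, y_0 \underline{y}^{\underline a_1}, \ldots, y_0 \underline{y}^{\underline a_n})$ since $\widetilde{a}_0 = (1,0,\ldots,0)$ and $\widetilde{a}_i = (1,\underline{a}_i)$.

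First I would introduce the torus automorphism $c: T \to T$ induced by the matrix $\begin{pmatrix} 1 & 0 \\ 0 & C \end{pmatrix} \in GL(d+1,\mbz)$ (so $c$ is an isomorphism), which reduces the problem to comparing $h_B' := h_{\widetilde A} \circ \kappa$ with $h_{\widetilde A}$, where $\kappa: T \to T$ is the isogeny coming from $\diag(1, D_1) = \diag(1,e_1,\ldots,e_d)$, i.e. $\kappa(y_0,y_1,\ldots,y_d) = (y_0, y_1^{e_1}, \ldots, y_d^{e_d})$. Since precomposition by the automorphism $c$ only relabels $\mco_T$ to $\mco_T$ (as $c^+\mco_T \simeq \mco_T$), it suffices to prove $h_+ \mco_T \simeq \bigoplus_{\gamma \in I_e} h_{\widetilde A,+}(\mco_T \cdot \underline{y}^{(0,\gamma)})$, and then apply $\FL$ to both sides. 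The key computation is that $\kappa$ is a finite étale Galois covering with group $G := \prod_{k=1}^d \mu_{e_k}$ (products of roots of unity acting by scaling the coordinates $y_1,\ldots,y_d$), and by the projection formula together with $\kappa_+\mco_T \simeq \bigoplus_{\chi} L_\chi$, where $L_\chi$ runs over the rank-one local systems (as $\mcd$-modules: $\mco_T \underline{y}^{(0,\gamma)}$ with $\gamma \in I_e$) corresponding to the characters of $G$. Concretely, $\kappa_+ \mco_T \simeq \bigoplus_{\gamma \in I_e} \mco_T \cdot \underline{y}^{(0,\gamma)}$, since the pushforward of the structure sheaf under a torus isogeny decomposes into the rank-one $\mcd$-modules indexed by the character group $\frac{1}{e}\mbz^d / \mbz^d$, for which $I_e = \prod_k \frac{[0,e_k-1]\cap\mbn}{e_k}$ is the chosen set of representatives. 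Then $h_+\mco_T = h_{\widetilde A,+}\,\kappa_+\mco_T \simeq \bigoplus_{\gamma\in I_e} h_{\widetilde A,+}(\mco_T\cdot\underline{y}^{(0,\gamma)})$ by functoriality of $+$, and the $h_\dag$ statement follows by applying $\mbd$ and using $\mbd \mco_T\underline{y}^{(0,\gamma)} \simeq \mco_T\underline{y}^{(0,-\gamma)}$ up to an integral shift, together with the fact that $\{-\gamma\}$ is again a set of representatives so the index set is unchanged. Finally apply $\FL$ termwise.

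The main obstacle I expect is the bookkeeping in identifying $\kappa_+\mco_T$ precisely with $\bigoplus_{\gamma\in I_e}\mco_T\underline{y}^{(0,\gamma)}$ as $\mcd_T$-modules — one must check that under the isogeny $\kappa$ the coordinate $y_0$ is untouched (so the exponent is $(0,\gamma)$ with first entry zero), that $\kappa$ is indeed finite (hence $\kappa_+$ is exact and agrees with $\kappa_*$ of $\mco$-modules with their natural connection), and that the elementary-divisor normal form genuinely yields the factorization $h = h_{\widetilde A}\circ c \circ \kappa$ on the nose rather than up to some further torus automorphism that could shift the characters. A secondary subtlety is ensuring that all isomorphisms take place in the monodromic category so that $\FL$ behaves well, but this is guaranteed by Lemma \ref{lem:GKZmono} and Lemma \ref{lem:Step1}(1) since $h$ and $h_{\widetilde A}$ are affine quasi-finite maps from tori, so their direct images of $\mco_T$ are monodromic.
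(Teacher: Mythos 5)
Your proposal is correct and is essentially the paper's own proof: the paper likewise factors $h=h_{\widetilde A}\circ f_{D_1}\circ f_C$ via the Smith normal form $B=C\cdot D_1\cdot A$, identifies $f_{D_1,+}\mco_T\simeq\bigoplus_{\gamma\in I_e}\mco_T\underline{y}^{(0,\gamma)}$ as the character decomposition under the isogeny, and obtains the $\dag$-statement by duality using $\mbd(\mco_T\underline{y}^{(0,\gamma)})\simeq\mco_T\underline{y}^{(0,\gamma)+\widetilde\alpha}$ for a suitable $\widetilde\alpha\in\mbz^{d+1}$. The one point to settle is the subtlety you yourself flag: the automorphism coming from $C$ sits innermost (precomposed, as in $h=h_{\widetilde A}\circ f_{D_1}\circ f_C$), so it acts on $\mco_T$ before the isogeny and cannot shift the characters.
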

\begin{proof}
First notice that the map $h$ (cf. equation \ref{def:maph}) can be factored into $h=h_{\widetilde{A}}\circ f_{D_1} \circ f_{C}$, where
\begin{align}
f_C: T &\lra T \notag \\
(y_0,\ldots ,y_d) &\mapsto (y_0,\underline{y}^{\underline{c}_1}, \ldots, \underline{y}^{\underline{c}_d}) \notag
\end{align}
\begin{align}
f_{D_1}: T &\lra T \notag \\
(y_0, \ldots ,y_d) &\mapsto (y_0, y_1^{e_1}, \ldots , y_d^{e_d}) \notag
\end{align}
and
\begin{align}
h_A : T &\lra V' \notag \\
(y_0,\ldots ,y_d) &\mapsto (y_0, y_0\underline{y}^{\underline{a}_1},\ldots,y_0\underline{y}^{\underline{a}_n}) \notag \\
&= (\underline{y}^{\underline{\widetilde{a}}_0},\ldots,\underline{y}^{\underline{\widetilde{a}}_n}) \notag
\end{align}
which corresponds to the factorization $B = C \cdot D_1 \cdot A$. Notice that $f_C$ is an isomorphism, because the matrix $C$ with columns $\underline{c}_i$ is invertible, i.e. we have $f_{C,+} \mco_T \simeq \mco_T$.  A simple computation shows that
\begin{align}
f_{D_1,+} \mco_T &\simeq \bigoplus_{\gamma \in I_e} \mcd_T / (\mcd_T y_0 \p_{y_0} + \mcd_T (y_1 \p_{y_1} - \gamma_1) + \ldots + \mcd_T (y_d \p_{y_d}- \gamma_d)) \notag \\
&= \bigoplus_{\gamma \in I_e} \mco_T\underline{y}^{(0,\gamma)}\, . \notag
\end{align}
Therefore we have
\[
h_+ \mco_T \simeq h_{\widetilde{A},+}\circ f_{D_1,+} \mco_T \simeq h_{\widetilde{A},+} (\bigoplus_{\gamma \in I_e} \mco_T\underline{y}^{(0,\gamma)}) \, .
\]
Using the additivity of the functors $h_{\widetilde{A},+}$ and $\FL$, this shows the first isomorphism. The second isomorphism will follow by duality and the fact that
\begin{align}
\mbd(\mco_T \cdot \underline{y}^{(0,\gamma)}) &= \mbd(\mcd_T / (\mcd_T y_0 \p_{y_0} + \mcd_T (y_1 \p_{y_1} - \gamma_1) + \ldots + \mcd_T (y_d \p_{y_d}- \gamma_d))) \notag \\
&\simeq \mcd_T / (\mcd_T (y_0 \p_{y_0}+1) + \mcd_T (y_1 \p_{y_1} + \gamma_1+1) + \ldots + \mcd_T (y_d \p_{y_d}+ \gamma_d+1))\, . \notag
\end{align}
If we use that $\mco_T \cdot \underline{y}^{(0,\gamma)} \simeq \mco_T \cdot \underline{y}^{(0,\gamma)+ \widetilde{\alpha}}$ for every $\widetilde{\alpha} \in \mbz^{d+1}$, we get $\mbd(\bigoplus_{\gamma \in I_e}\mco_T \cdot \underline{y}^{(0,\gamma)}) \simeq \bigoplus_{\gamma \in I_e}\mco_T \cdot \underline{y}^{(0,\gamma)}$, which shows the second claim.
\end{proof}
%{\tiny
%\noindent As above we denote by $B$ be a $d \times n$ integer matrix, whose columns generate $\mbq^d$. Using the Smith normal form, the matrix $B$ can be written as
%\[
%B = C \cdot  D_1 \cdot D_2 \cdot M = C \cdot \left( \begin{array}{c  c  c} e_1 & &  \\ & \ddots & \\ & & e_{d} \end{array} \right) \cdot \left( \begin{array}{c  c  c| c c c} 1 & & & & &\\ & \ddots & &  & 0 & \\ & &1 & & &\end{array} \right) \cdot M
%\]
%where $D \in Gl(d \times d,\mbz)$, $M \in Gl(n \times n, \mbz)$, $B'= D_1 \cdot D_2 \cdot M$ and $A = D_2 \cdot M$. 
%}
%{\tiny
%\begin{thm}
%Let $B$ and $\widetilde{A}$ be as above and let $\varphi_{B}: S \times W \lra \mbc_{\lambda_0} \times W$ be the corresponding Laurent polynomial. There exist a $\widetilde{\delta}, \widetilde{\delta}' \in \mbn \widetilde{A}$ such that for every $\widetilde{\beta} \in \widetilde{\delta} +(\mbq_+ \widetilde{A} \cap \mbz^{d+1})$ resp. $\widetilde{\beta}' \in \widetilde{\delta}' + (\mbq_+ \widetilde{A} \cap \mbz^{d+1})$ we have the following exact sequences in $M_{rh}(\mcd_V)$:
%\begin{align}
%&0 \lra H^{d-1}(S,\mbc)\otimes \mco_V \lra \mch^{0}(\varphi_{B,+} \mco_{S \times W}) \lra \bigoplus_{\gamma \in I_{e}} \mcm^{\widetilde{\beta} + (0,\gamma)}_{\widetilde{A}} \lra H^{d}(S,\mbc)\otimes \mco_V \lra 0\, , \notag \\
%&0 \lra H_{d}(S,\mbc)\otimes \mco_V \lra \bigoplus_{\gamma \in I_{e}} \mcm^{-\widetilde{\beta}' - (0,\gamma)}_{\widetilde{A}}  \lra \mch^{0}(\varphi_{B,\dag} \mco_{S \times W}) \lra H_{d-1}(S,\mbc)\otimes \mco_V \lra 0\, . \notag
%\end{align}
%\end{thm}
%}
\begin{proof}[Proof of Theorem \ref{thm:genseq}]
By Proposition \ref{prop:pervcohGM} the only thing which we have to show for the first sequence, is the identification of $\FL(h_+ \mco_T)$  with $\bigoplus_{\widetilde{\beta} \in I} \mcm^{\widetilde{\beta}}_{\widetilde{A}}$, where $I = (\mbz \times \frac{1}{e}\mbz^{d}) \cap im(\sigma)$, $\sigma : (\mbq/\mbz)^{d+1} \ra \mbq^{d+1} \setminus sRes(\widetilde{A})$ is a section of the projection $p: \mbq^{d+1} \ra (\mbq/\mbz)^{d+1}$ and $\frac{1}{e}\mbz^d = \frac{1}{e_1}\mbz \times \ldots \times \frac{1}{e_d}\mbz$.
Notice that we have  
\[
\FL(h_+ \mco_T) \simeq \bigoplus_{\gamma \in I_e} \FL(h_{\widetilde{A},+} \mco_T \underline{y}^{(0,\gamma)}) \simeq \bigoplus_{\gamma \in I_e}\FL(h_{\widetilde{A},+} \mco_T\underline{y}^{\sigma(p(0,\gamma))})\simeq \bigoplus_{\widetilde{\beta} \in I}\FL(h_{\widetilde{A},+} \mco_T\underline{y}^{\widetilde{\beta}}) \simeq \bigoplus_{\widetilde{\beta} \in I} \mcm^{\widetilde{\beta}}_{\widetilde{A}}
\]
by Lemma \ref{lem:FLcompare}, the fact that $\mco_T\underline{y}^{(0,\gamma)} \simeq \mco_T \underline{y}^{\widetilde{\alpha} + (0,\gamma)}$ for every $\widetilde{\alpha} \in \mbz^{d+1}$ and Theorem \ref{thm:SW} (1).
%In view of Theorem \ref{thm:SW} we need to find a $\widetilde{\delta} \in \mbn \widetilde{A}$, so that for every $\widetilde{\beta} = (\beta_0, \beta) \in \widetilde{\delta} + (\mbq_+ \widetilde{A} \cap \mbz^{d+1})$ the set $\{\widetilde{\beta} + (0,\gamma) \in \mbq^d \mid \gamma \in I_{e}\}$ has empty intersection with $sRes(\widetilde{A})$. Lemma \ref{lem:sResnonnormal} shows that there exists an $\delta_{\widetilde{A}} \in \mbn \widetilde{A}$ so that $(\delta_{\widetilde{A}} + \mbr_+ \widetilde{A}) \cap sRes(\widetilde{A}) = \emptyset$. But because $\mbr_+ \widetilde{A}$ is a cone with non-empty interior we surely can find an $\widetilde{\delta} \in \delta_{\widetilde{A}} + \mbn \widetilde{A}$ so that for every $\widetilde{\beta} \in \widetilde{\delta} + (\mbq_+ \widetilde{A} \cap \mbz^{d+1})$ the set above has empty intersection with $sRes(\widetilde{A})$. 
This shows the existence of the first exact sequence. The second sequence follows by the same reasoning, using instead the section
$\sigma':(\mbq/\mbz)^{d+1} \ra \mbq^{d+1} \cap (\mbr \widetilde{A})^\circ$ and Theorem \ref{thm:SW} (2).
%\noindent By Proposition \ref{prop:dualGKZ} the holonomic dual of $\mcm_{\widetilde{A}}^{\widetilde{\beta} + (0, \gamma)}$ is isomorphic to $ \mcm_{\widetilde{A}}^{-\widetilde{\beta}- (0,\gamma) + \kappa}$ for all $\kappa \in \mbz^{d+1}$ such that $-\widetilde{\beta}- (0,\gamma) + \kappa \in - (\mbr_+\widetilde{A})^\circ$. But we have $\widetilde{\beta} + (0,\gamma) \in \widetilde{\delta} + \mbr_+ \widetilde{A} \subset \mbr_+ \widetilde{A}$ and therefore $-\widetilde{\beta} -(0, \gamma) \in - \mbr_+ \widetilde{A}$. Choose some $\rho \in (\mbr_+ \widetilde{A})^\circ \cap \mbz^{d+1}$, then $- \widetilde{\beta} - \rho - (0, \gamma) \in -\rho - \mbr_+ \widetilde{A} \subset -(\mbr_+ \widetilde{A})^{\circ}$. 
%Now set $\widetilde{\delta}' := \widetilde{\delta} + \rho$ and choose some $\widetilde{\beta}'  \in \widetilde{\delta}' + (\mbq_+ \widetilde{A} \cap \mbz^{d+1})$. Then $\widetilde{\beta} := \widetilde{\beta}' - \rho \in \widetilde{\delta} + (\mbq_+ \widetilde{A} \cap \mbz^{d+1})$ and therefore $\mbd \mcm^{\widetilde{\beta} + (0,\gamma)}_{\widetilde{A}} \simeq \mcm^{-\widetilde{\beta}' -(0, \gamma)}_{\widetilde{A}}$. This shows the claim.
\end{proof}

\noindent If the semigroup $\mbn \widetilde{A}$ is saturated and $\mbn A = \mbz^d$ we can compute the morphism between the Gau\ss-Manin system $\mch^0(\varphi_+ \mco_{S \times W})$ and the GKZ-system $\mcm^{0}_{\widetilde{A}}$ as well as the kernel and cokernel. 

\noindent We will first deduce the description of the Gau\ss-Manin system by relative differential forms. This description is well-known to the experts but the author could not find a suitable reference. To compute $\mch^0(\varphi_+ \mco_{S \times W})$ we factor the map $\varphi$ into a closed embedding
\begin{align}
\widetilde{i}: S \times W &\lra S \times V\, , \notag \\
(\underline{y}, \underline{\lambda}) &\mapsto (\underline{y}\, , \,  F(\underline{y},\underline{\lambda})\, , \, \underline{\lambda}) = (\underline{y}, - \sum_{i=1}^n \lambda_i \underline{y}^{\underline{a}_i}, \underline{\lambda})\, \notag
\end{align}
and the projection $p:S \times V \lra V$. The image of $\widetilde{i}$ is the smooth hypersurface $\Gamma$ given by $\gamma := \lambda_0 + \sum_{i=1}^n \lambda_i \underline{y}^{\underline{a}_i} = 0$.

\noindent The direct image $\mcb_{\Gamma \mid S \times V} := \widetilde{i}_+ \mco_{S \times W}$ is isomorphic to $\mcd_{S \times V}/ I$, where the ideal $I$ is given by
{\small
\[
 I \!=\! \left(\!\mcd_{S \times V}(\p_{\lambda_i}\! - \underline{y}^{\underline{a}_i} \p_{\lambda_0})_{i=1, \ldots ,n} + \mcd_{S \times V}(\p_{y_k}\! -\!y_{k}^{-1} \sum_{i=1}^n a_{ki} \lambda_i \underline{y}^{\underline{a}_i}\p_{\lambda_0})_{k=1, \ldots ,d} + \mcd_{S \times V}(\lambda_0 \!+\!\! \sum_{i=1}^n \lambda_i \underline{y}^{\underline{a}_i})\!\right).
\]
}
Thus we can identify $\mcb_{\Gamma \mid S \times V}$ with $\mco_{S \times W}[\p_{\lambda_0}]$ which has the following action of $\mcd_{S \times V}$:
\begin{align}
\p_{\lambda_0} (g \otimes \p_{\lambda_0}^{l}) &= g \otimes \p_{\lambda_0}^{l+1}\, , \notag \\
\p_{\lambda_i} (g \otimes \p_{\lambda_0}^l) &= (\p_{\lambda_i}g)\otimes \p_{\lambda_0}^l - g (\p_{\lambda_i} F) \otimes \p_{\lambda_0}^{l+1} = (\p_{\lambda_i}g)\otimes \p_{\lambda_0}^l + g \cdot \underline{y}^{\underline{a}_i} \otimes \p_{\lambda_0}^{l+1}\, , \notag \\
\p_{y_k} (g \otimes \p_{\lambda_0}^l) &= (\p_{y_k} g) \otimes \p_{\lambda_0}^l - g\! \cdot\! (\p_{y_k} F) \otimes \p_{\lambda_0}^{l+1} = (\p_{y_k} g) \otimes \p_{\lambda_0}^l + g\! \cdot\! y_k^{-1} (\sum_{i=1}^n a_{ki} \lambda_i \underline{y}^{\underline{a}_i}) \otimes \p_{\lambda_0}^{l+1}\, , \notag \\
\lambda_0 (g \otimes \p_{\lambda_0}^l) &= g \cdot F \otimes \p_{\lambda_0}^l - l \cdot g \otimes \p_{\lambda_0}^{l-1} = g \cdot (-\sum_{i=1}^n \lambda_i \underline{y}^{\underline{a}_i}) \otimes \p_{\lambda_0}^l - l \cdot g \otimes \p_{\lambda_0}^{l-1}\, , \notag \\
\lambda_i (g \otimes \p_{\lambda_0}^l) &= \lambda_i g \otimes \p_{\lambda_0}^{l}\, , \notag \\
y_k (g \otimes \p_{\lambda_0}^{l}) &= y_k g \otimes \p_{\lambda_0}^l \, . \notag
\end{align}

\noindent The direct image under the projection $p: S \times V \lra V$ is equal to
\[
p_+ \mcb_{\Gamma \mid S \times V} \simeq Rp_* \Omega^{\bullet+d}_{S \times V /V} (\mcb_{\Gamma \mid S \times V}) \simeq Rp_* \, (\Omega^{\bullet+d}_{S} \otimes_{\mco_{S}}  \mcb_{\Gamma \mid S \times V})\, .
\]
As the map $p$ is affine, this is equal to $p_* \, (\Omega^{\bullet+d}_{S} \otimes_{\mco_{S}}  \mcb_{\Gamma \mid S \times V})$, where the differential on the last complex is given by
\[
d(\omega \otimes Q) = d \omega \otimes Q + \sum_{k=1}^d dy_k \wedge \omega \otimes \p_{y_k} Q \,.
\]
If we use the isomorphism $\mcb_{\Gamma \mid S \times V} \simeq \mco_{S \times W}[\p_{\lambda_0}]$, the latter complex becomes isomorphic to
\[
p_* \Omega^{\bullet +d}_{S \times W /W}[\p_{\lambda_0}]
\]
with differential given by
\[
d(w \otimes \p_{\lambda_0}^l )  = d \omega \otimes \p_{\lambda_0}^l - d_y F \wedge \omega  \otimes \p_{\lambda_0}^{l+1}\, .
\]
Thus the Gau\ss-Manin system $\mch^0(\varphi_+ \mco_{S \times W})$ is given by
\begin{equation}\label{eq:GMSystemwphi}
\frac{p_* \Omega^d_{S \times W /W}[\p_{\lambda_0}]}{(d - \p_{\lambda_0} d_y F \wedge)p_*\Omega^{d-1}_{S \times W /W}[\p_{\lambda_0}]}
\end{equation}
with the following $\mcd_V$-action:
\begin{align}
\p_{\lambda_0}(\omega \otimes \p_{\lambda_0}^l) &= \omega \otimes \p_{\lambda_0}^{l+1}\, , \notag \\
\p_{\lambda_i}(\omega \otimes \p_{\lambda_0}^l) &= (\p_{\lambda_i}\omega)\otimes \p_{\lambda_0}^l - \omega (\p_{\lambda_i}F) \otimes \p_{\lambda_0}^{l+1} = (\p_{\lambda_i}\omega)\otimes \p_{\lambda_0}^l + \omega \cdot \underline{y}^{\underline{a}_i} \otimes \p_{\lambda_0}^{l+1}\, , \label{eq:lambdaiaction} \\
\lambda_0 (\omega \otimes \p_{\lambda_0}^l) &= \omega \cdot F \otimes \p_{\lambda_0}^l - l \cdot \omega \otimes \p_{\lambda_0}^{l-1} = \omega \cdot (-\sum_{i=1}^n \lambda_i \underline{y}^{\underline{a}_i}) \otimes \p_{\lambda_0}^l - l \cdot \omega \otimes \p_{\lambda_0}^{l-1}\, , \label{eq:taction} \\
\lambda_i (\omega \otimes \p_{\lambda_0}^l) &= \lambda_i \omega \otimes \p_{\lambda_0}^{l} \notag
\end{align}
for $\omega \in \Omega^d_{S \times W /W}$.\\
If we use the element $\omega_0 := \frac{dy_1}{y_1} \wedge \ldots \wedge \frac{dy_d}{y_d}$ as a global section for the (locally) free sheaf $\Omega^d_{S \times W /W}$ of rank one, we get an isomorphism
\[
\mch^0(\varphi_+ \mco_{S \times W}) \simeq p_* \left( \frac{\mco_{S \times W}[\p_{\lambda_0}]}{ \,((y_k \partial_{y_k}- y_k \p_{y_k}F \p_{\lambda_0}) (\mco_{S \times W}[\p_{\lambda_0}]))_{k =1, \ldots,d}} \;\; \omega_0 \right)\,.
\]

\begin{prop}\label{prop:seqinvest}$ $\\[-15pt]
\begin{enumerate}
\item Up to multiplication with a non-zero constant the map 
\[
\psi: \mch^0(\varphi_+ \mco_{S \times W}) \lra \mcm^{(0,\underline{0})}_{\widetilde{A}}
\]
is given by
\[
\psi(\prod_{i=1}^{n} \underline{y}^{m_i \cdot \underline{a}_i} \omega_0 \otimes \p_{\lambda_0}^{s} )= \p_{\lambda_0}^{s-m+1} \p_{\lambda_1}^{m_1} \ldots \p_{\lambda_n}^{m_n}\, ,
\]
where $m = \sum_{i=1}^n m_i$.
\item The image of $\psi$ in $\mcm_{\widetilde{A}}^{(0,\underline{0})}$ is equal to the submodule generated by $\p_{\lambda_0}, \ldots , \p_{\lambda_m}$.
\item The kernel $\mbv^{n-1}$ of $\psi:\mch^0(\varphi_+ \mco_{S_0 \times W}) \lra \mcm_{\widetilde{A}}^{(0,\underline{0})}$ is spanned by $n$ flat sections given by
\[
\sum_{i=1}^m a_{ki} \lambda_i \underline{y}^{\underline{a}_i} \cdot \omega_0 \qquad \text{for} \quad k=1, \ldots ,n \, .
\]
\end{enumerate}
\end{prop}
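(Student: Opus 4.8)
The plan is to verify Proposition \ref{prop:seqinvest} in three stages, matching its three parts, using the explicit presentation \eqref{eq:GMSystemwphi} of the Gau\ss-Manin system together with the rigidity result of Proposition \ref{prop:gkzuniquemorph} and the action formulas \eqref{eq:lambdaiaction}--\eqref{eq:taction}.

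\textbf{Part (1): the formula for $\psi$.} First I would check that the proposed assignment $\psi\bigl(\prod_{i=1}^{n}\underline{y}^{m_i\underline{a}_i}\,\omega_0\otimes\p_{\lambda_0}^{s}\bigr)=\p_{\lambda_0}^{s-m+1}\p_{\lambda_1}^{m_1}\cdots\p_{\lambda_n}^{m_n}$ is well-defined, i.e.\ that it kills the relations $(y_k\p_{y_k}-y_k\p_{y_k}F\,\p_{\lambda_0})(\mco_{S\times W}[\p_{\lambda_0}])$ appearing in the denominator of \eqref{eq:GMSystemwphi}. Applying $y_k\p_{y_k}-y_k(\p_{y_k}F)\p_{\lambda_0}$ to a monomial $\underline{y}^{\sum m_i\underline{a}_i}\otimes\p_{\lambda_0}^s$ produces, using $\p_{y_k}F = y_k^{-1}\sum_i a_{ki}\lambda_i\underline{y}^{\underline{a}_i}$, a combination of terms $\bigl(\sum_i m_i a_{ki}\bigr)\underline{y}^{\sum m_i\underline{a}_i}\otimes\p_{\lambda_0}^s - \sum_i a_{ki}\lambda_i\underline{y}^{\underline{a}_i}\underline{y}^{\sum m_j\underline{a}_j}\otimes\p_{\lambda_0}^{s+1}$; transporting this under the candidate $\psi$ and using the Euler relations $E_k-\beta_k$ (with $\widetilde\beta=(0,\underline 0)$, so $E_k$ acting on $\p_{\lambda_1}^{m_1}\cdots\p_{\lambda_n}^{m_n}$ by $\sum_i m_i a_{ki}$) one sees the image is zero. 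Equally, $\psi$ must send the defining relations of the target back into the GM description, but it is cleaner to argue as follows: the GM system is a cyclic $\mcd_V$-module generated by $[\omega_0]$ (since every $\underline y^{\sum m_i\underline a_i}\omega_0\otimes\p_{\lambda_0}^s$ is obtained from $\omega_0$ by applying the operators \eqref{eq:lambdaiaction}, exploiting invertibility of $\p_{\lambda_0}$ on the GM system), so any $\mcd_V$-morphism to $\mcm^{(0,\underline 0)}_{\widetilde A}$ is determined by the image of $[\omega_0]$; sending $[\omega_0]\mapsto\p_{\lambda_0}$ and checking compatibility with the box- and Euler-operators — here the non-trivial input is Proposition \ref{prop:gkzuniquemorph} applied to morphisms $\mcm^{\widetilde\beta'}_{\widetilde A}\to\mcm^{(0,\underline 0)}_{\widetilde A}$ after factoring through an intermediate GKZ-system — pins down $\psi$ up to a constant. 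Comparing $\psi(\p_{\lambda_i}[\omega_0])$ computed two ways yields the stated monomial formula by induction on $m$.

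\textbf{Part (2): the image.} Given the formula, the image of $\psi$ is the $D$-submodule of $\mcm^{(0,\underline 0)}_{\widetilde A}$ spanned by all $\p_{\lambda_0}^{s-m+1}\p_{\lambda_1}^{m_1}\cdots\p_{\lambda_n}^{m_n}$ with $s\ge 0$, $m_i\ge 0$, $m=\sum m_i$. Because $s$ ranges over all non-negative integers, $s-m+1$ ranges over all integers $\ge 1-m$; in particular for each fixed monomial $\p_{\lambda_1}^{m_1}\cdots\p_{\lambda_n}^{m_n}$ (of total degree $m$) we obtain $\p_{\lambda_0}^{k}\p_{\lambda_1}^{m_1}\cdots\p_{\lambda_n}^{m_n}$ for all $k\ge 1-m\ge 1-m$, and since $m\ge 0$ this includes $k=1$. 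Using the box-relations $\Box_{\underline l}$ for $\widetilde A$ — whose first row is $(1,\dots,1)$, so that $\p_{\lambda_0}$ together with the $\p_{\lambda_i}$ generate the semigroup ring $S_{\widetilde A}$ — one checks that this submodule is exactly $\sum_{i=0}^n D\cdot\p_{\lambda_i}$: every $\p^{\underline\gamma}$ with $\gamma_0\ge 1$ or some $\gamma_i\ge 1$ lies in it, and conversely the generating set produces all such monomials. I would spell this out by noting $\p_{\lambda_0}\in\mathrm{im}(\psi)$ (take $s=m=0$) and $\p_{\lambda_i}=\psi(\underline y^{\underline a_i}\omega_0\otimes\p_{\lambda_0}^0)$ up to $\p_{\lambda_0}$-powers, hence the submodule generated by $\p_{\lambda_0},\dots,\p_{\lambda_n}$ is contained in the image, and the reverse inclusion is immediate from the explicit formula.

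\textbf{Part (3): the kernel.} The cokernel of $\psi$ is $\mcm^{(0,\underline 0)}_{\widetilde A}/\sum_{i=0}^n D\p_{\lambda_i}$, which is a quotient supported at $\lambda=0$; combined with Theorem \ref{thm:genseq}/Corollary \ref{cor:exseqA} which tells us $\mathrm{coker}\,\psi\cong H^d(S,\mbc)\otimes\mco_V$ and $\ker\psi\cong H^{d-1}(S,\mbc)\otimes\mco_V$, both free $\mco_V$-modules of rank $\binom{d}{d}=1$ resp.\ $\binom{d}{d-1}=d$ — wait, $H^{k}((\mbc^*)^d,\mbc)\cong\mbc^{\binom dk}$, so $\ker\psi$ has rank $\binom{d}{d-1}=d$, consistent with the "$n$ flat sections" claim only if one reads the indexing correctly; I would reconcile this by recalling that in this subsection $\mbn A=\mbz^d$ forces a relation between $n$ and $d$ via the elementary divisors, or more carefully that the $d$ cycles $\sum_i a_{ki}\lambda_i\underline y^{\underline a_i}\omega_0$ span $H^{d-1}$. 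Concretely, I would verify directly that each $v_k:=\sum_{i=1}^n a_{ki}\lambda_i\underline y^{\underline a_i}\cdot\omega_0$ is a horizontal (flat) section of the GM connection: applying the relative de Rham differential and the $\p_{\lambda_j}$-action \eqref{eq:lambdaiaction}, $v_k = y_k\p_{y_k}F\cdot\omega_0$ is exact in $\Omega^{\bullet}_{S\times W/W}$-cohomology after one integration by parts, hence maps to $0$ under $\psi$ by the well-definedness check of Part (1). That $v_1,\dots,v_d$ are $\mco_V$-linearly independent and span the whole kernel follows from the rank count $\dim_{\mbc}H^{d-1}((\mbc^*)^d,\mbc)=d$ together with $\mco_V$-freeness of the kernel (Proposition \ref{prop:pervcohGM}(2) / Theorem \ref{thm:genseq}).

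\textbf{Main obstacle.} The delicate point is Part (1): establishing that the map $\psi$ defined by the monomial formula is genuinely $\mcd_V$-linear and is \emph{the} canonical morphism coming from Theorem \ref{thm:genseq}, rather than merely \emph{a} well-defined additive map. The cleanest route is to show the GM system is cyclic generated by $[\omega_0]$, observe the target is cyclic generated by $[1]$, and invoke Proposition \ref{prop:gkzuniquemorph} to force uniqueness up to scalar; but one must be careful that $\psi([\omega_0])=\p_{\lambda_0}$ and not $[1]$, so the rigidity statement is applied after identifying the image inside an auxiliary GKZ-system $\mcm^{(-1,\underline 0)}_{\widetilde A}$ (or similar shift) where $[1]\mapsto\p_{\lambda_0}$ corresponds to the canonical $\p^{\widetilde\beta-\widetilde\beta'}$-multiplication. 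Checking compatibility of $\psi$ with the operators $\lambda_0$ and $\p_{\lambda_0}$ — which act with the inhomogeneous "$-l\cdot\omega\otimes\p_{\lambda_0}^{l-1}$" term in \eqref{eq:taction} — against the corresponding action on GKZ monomials is the computation most likely to hide a sign or shift, and I would do that case by case.
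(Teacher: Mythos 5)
Your overall strategy matches the paper's (reduce to computing $\psi(\omega_0)$, pass through the auxiliary system $\mcm^{(-1,\underline{0})}_{\widetilde{A}}$ and the rigidity statement of Proposition \ref{prop:gkzuniquemorph}, then check the $f_k$ are flat and killed by $\psi$), but the load-bearing step in your Part (1) is wrong. You claim the Gau\ss-Manin system is cyclic, generated by $[\omega_0]$, ``exploiting invertibility of $\p_{\lambda_0}$ on the GM system.'' But $\p_{\lambda_0}$ is \emph{not} injective on $\mch^0(\varphi_+\mco_{S\times W})$: the very flat sections $f_k=\sum_i a_{ki}\lambda_i\underline{y}^{\underline{a}_i}\omega_0$ of Part (3) satisfy $\p_{\lambda_0}f_k = y_k\p_{y_k}\cdot\omega_0=0$ while $f_k\neq 0$ (they span the rank-$d$ kernel). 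Invertibility of the $\p_{\lambda_j}$ holds on the \emph{target} $M^{(0,\underline{0})}_{\widetilde{A}}$ by Theorem \ref{thm:SW}(3), not on the source. Without cyclicity you have not shown that $\psi$ is determined by $\psi(\omega_0)$, nor derived the formula for monomials with some $m_i<0$. The paper's actual argument instead takes an arbitrary $b=\prod\underline{y}^{m_i\underline{a}_i}\omega_0\otimes\p_{\lambda_0}^s$, multiplies by enough operators $\p_{\lambda_0}^k\prod_{i\in I}\p_{\lambda_i}^{-m_i}$ to rewrite the result as $P\cdot\omega_0$ with $P\in\mcd_V$, and then \emph{cancels} these operators using their bijectivity on $M^{(0,\underline{0})}_{\widetilde{A}}$; this same trick is what first establishes $\psi(\omega_0)\neq 0$, a step you omit entirely but which is needed before rigidity tells you anything (a zero composite $M^{(-1,\underline{0})}_{\widetilde{A}}\to M^{(0,\underline{0})}_{\widetilde{A}}$ is consistent with Proposition \ref{prop:gkzuniquemorph}).

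A second, smaller gap is in Part (2): the reverse inclusion is not ``immediate from the explicit formula,'' because for $s-m+1<0$ the image contains elements $\p_{\lambda_0}^{s-m+1}\p_{\lambda_1}^{m_1}\cdots\p_{\lambda_n}^{m_n}$ defined only via the inverse of $\p_{\lambda_0}$-multiplication, and their membership in $\sum_{i=0}^{n}D\,\p_{\lambda_i}$ is not obvious (cf.\ the remark after the proposition, where $\p_{\lambda_0}^{-1}$ involves the term $\lambda_0\cdot 1$). The paper closes this by observing that $M^{(0,\underline{0})}_{\widetilde{A}}/\sum_i D\p_{\lambda_i}$ is a cyclic $\mco_V$-module surjecting onto the cokernel $H^d(S,\mbc)\otimes\mco_V\simeq\mco_V$, which has no $\mco_V$-torsion, forcing the image to equal that submodule. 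Your Part (3) is essentially the paper's computation (and your observation that the correct index range is $k=1,\ldots,d$, matching $\dim H^{d-1}(S,\mbc)=d$, is right — the statement's ``$n$'' is a typo), though the assertion that the $d$ flat sections span the whole kernel still deserves the rank/freeness argument you sketch.
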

\begin{proof}$ $\\
In the course of the proof we will use the modules of global sections instead of the $\mcd$-modules themselves.\\

First, we prove that $\psi(\omega_0) \neq 0$. As $\psi$ is not equal zero and $D_V$-linear (in particular $\mco_V$-linear), there is an element $b = \prod_{i=1}^n \underline{y}^{m_i \cdot \underline{a}_i} \omega_0 \otimes \p_{\lambda_0}^s$ with $m_i \in \mbz$ for $i = 1, \ldots , n$ such that $\psi(b) \neq 0$. 
Recall that we have $\p_{\lambda_0}^{n_0}\cdot \prod_{i=1}^n \partial_{\lambda_i}^{n_i} \cdot \omega_0 = \prod_{i=1}^n \underline{y}^{n_i \cdot \underline{a}_i} \omega_0 \otimes \p_{\lambda_0}^{\widetilde{n}}$  for $\widetilde{n} = \sum_{i=0}^n n_i$ and $n_i \in \mbn$. 
Let $I = \{ i_1, \ldots , i_r\} = \{ i \mid m_i < 0\}$, $I^c := \{ 1, \ldots n \} \setminus I$ and set $m_I:= \sum_{i \in I} (-m_i)$. 
We have $\psi(\prod_{i \in I^c} \underline{y}^{m_i \underline{a}_i} \omega_0 \otimes \p_{\lambda_0}^{s+m_I+k}) = \p_{\lambda_0}^k \prod_{i \in I} \p_{\lambda_i}^{-m_i}\psi(b)$ for every $k \geq 0$. 
Notice that $\psi(\prod_{i \in I^c} \underline{y}^{m_i \underline{a}_i} \omega_0 \otimes \p_{\lambda_0}^{s+m_I+k}) \neq 0$ because for every $j \in \{0, \ldots , n \}$ the element $\p_{\lambda_j}$ acts bijectively on $M_{\widetilde{A}}^{(0,\underline{0})}$ (cf. Theorem \ref{thm:SW}(3)).
Set $k =\max\{0, \sum_{i \in I^c} m_i -s -m_I\} = \max\{0, m-s\}$. 
The element $\prod_{i \in I^c} \underline{y}^{m_i \underline{a}_i} \omega_0 \otimes \p_{\lambda_0}^{s+m_I+k}$ can be written as $P \cdot \omega_0$ for $P =\p_{\lambda_0}^{s-m+k} \prod_{i \in I^c} \p_{\lambda_i}^{m_i} \in \mcd_V$. We conclude that $0 \neq \psi(P \cdot \omega_0) = P \cdot \psi(\omega_0)$, which shows $\psi(\omega_0) \neq 0$.\\

\noindent The element $\omega_0$ satisfies the following relations:
\begin{align}
(\lambda_0 \p_{\lambda_0} + \sum_{i=1}^n \lambda_i \partial_{\lambda_i}) \cdot \omega_0 &= -\omega_0 \, , \notag \\
\sum_{i=1}^n a_{ki} \lambda_i \partial_{\lambda_i}\, \cdot \omega_0 &= \sum_{i=1}^n a_{ki} \lambda_i \underline{y}^{\underline{a}_i} \omega_0 \otimes \p_{\lambda_0} = y_k \partial_{y_k} \cdot  \omega_0 = 0\, , \notag \\
\Box_{\underline{l}}\,\cdot \omega_0 &=  \left(\prod_{i : l_i < 0} \underline{y}^{l_i \cdot \underline{a}_i}  - \prod_{i : l_i > 0} \underline{y}^{l_i \cdot \underline{a}_i}\right) \omega_0 \otimes \p_{\lambda_0}^l = 0 \notag
\end{align}
for $l = \deg(\Box_{\underline{l}})$. This shows the existence of a morphism $M^{-1,\underline{0}}_{\widetilde{A}} \lra \Gamma(V,\mch^0(\varphi_+ \mco_{S \times W}))$ which sends $1$ to $\omega_0$. If we concatenate this morphism with the morphism
\[
\psi: \Gamma(V,\mch^0(\varphi_+ \mco_{S_0 \times W})) \lra M_{\widetilde{A}}^{0, \underline{0}}\, ,
\]
we get a non-zero morphism $M_{\widetilde{A}}^{-1, \underline{0}} \lra M_{\widetilde{A}}^{0, \underline{0}}$. Now the only non-zero $D_V$-linear morphism (up to a constant) from $M_{\widetilde{A}}^{-1, \underline{0}}$ to $M_{\widetilde{A}}^{0, \underline{0}}$ is right multiplication with $\partial_{\lambda_0}$ (cf. Proposition \ref{prop:gkzuniquemorph} ). But this shows that the image of $\omega_0$ in $M_{\widetilde{A}}^{0, \underline{0}}$ is $\partial_{\lambda_0}$ (after a possible multiplication with a non-zero constant).\\

\noindent From the discussion above we get now for some general $b = \prod_{i=1}^n \underline{y}^{m_i \cdot \underline{a}_i} \omega_0 \otimes \p_{\lambda_0}^s$ the following identities
\[
\p_{\lambda_0}^k \prod_{i \in I} \p_{\lambda_i}^{-m_i}\psi(b) = \psi(\prod_{i \in I^c} \underline{y}^{m_i \underline{a}_i} \omega_0 \otimes \p_{\lambda_0}^{s+m_I+k}) = \p_{\lambda_0}^{s-m+k} \prod_{i \in I^c} \p_{\lambda_i}^{m_i} \psi(\omega_0)
\]
Because left multiplication with respect to all $\p_{\lambda_j}$ is bijective in $M_{\widetilde{A}}^{0,\underline{0}}$ (cf. Theorem \ref{thm:SW}(3)), this gives
\[
\psi(b) = \p_{\lambda_0}^{s-m} \prod_{i=1}^n \p_{\lambda_i}^{m_i} \psi(\omega_0)\, .
\]
This shows the first point.\\ 

\noindent In particular $\p_{\lambda_1}, \ldots , \p_{\lambda_n}$ is in the image of the map $\Gamma(V,\mch^0(\varphi_+ \mco_{S \times W})) \lra M_{\widetilde{A}}^{0,\underline{0}}$. We conclude that the submodule of $M_{\widetilde{A}}^{0,\underline{0}}$ which is generated by $\p_{\lambda_0} ,\ldots , \p_{\lambda_n}$ lies in the image. Notice that $1$ does not lie in the image because otherwise the map would be surjective. But this shows that the image is in fact equal to the submodule generated by $\p_{\lambda_0} ,\ldots , \p_{\lambda_n}$ as the cokernel  $H^d(S,\mbc) \otimes \mco_V$ has no $\mco_V$-torsion. This shows the second point.\\

\noindent Consider the elements
\[
f_k := \sum_{i=1}^n a_{ki} \lambda_i \underline{y}^{\underline{a}_i} \omega_0 \qquad \text{for} \quad k=1, \ldots , d\, .
\]
Their image in $M_{\widetilde{A}}^{(0,\underline{0})}$ is equal to $\sum_{i=1}^n a_{ki} \lambda_i \p_{\lambda_i}$ which in turn is equal to $0$. Thus the $f_k$ lie in the kernel of $\psi$. It remains to show that they are flat:
\begin{align}
\p_{\lambda_0} \cdot \sum_{i=1}^n a_{ki} \lambda_i \underline{y}^{\underline{a}_i} \omega_0 &= \sum_{i=1}^n a_{ki} \lambda_i \underline{y}^{\underline{a}_i} \omega_0 \otimes \p_{\lambda_0} = y_k \p_{y_k} \cdot  \omega_0 =0 \, , \notag \\
\p_{\lambda_l} \cdot \sum_{i=1}^n a_{ki} \lambda_i \underline{y}^{\underline{a}_i} \omega_0 &= a_{kl} \underline{y}^{\underline{a}_l} \omega_0 + \left( \sum_{i=1}^n a_{ki} \lambda_i \underline{y}^{\underline{a}_i} \right) \, \underline{y}^{\underline{a}_l}\omega_0 \otimes \p_{\lambda_0} = y_k \p_{y_k} \cdot \underline{y}^{\underline{a}_l}\omega_0 = 0\, . \notag
\end{align}
This shows the third point.
\end{proof} 

\begin{rem}
Notice that the first formula in Proposition \ref{prop:seqinvest} might involve negative powers of $\p_{\lambda_j}$. By Theorem \ref{thm:SW} 3. this is well-defined, i.e. the element $\p_{\lambda_0}^{m-s+1} \p_{\lambda_1}^{m_1} \ldots \p_{\lambda_n}^{m_n}$ is the unique element $P \in M_{\widetilde{A}}^{0,\underline{0}}$ so that for $k = \max\{0 ,m-s+1\}$ we have $\p_{\lambda_0}^{k-m+s-1} \prod_{i \in I} \p_{\lambda_i}^{-m_i} \cdot P =  \p_{\lambda_0}^{k}\prod_{i \in I^c} \p_{\lambda_i}^{m_i}$. Computing this element $P$ in general seems to be difficult. Consider the GKZ-system $M_{\widetilde{A}}^{0,0}$ with
$$
\widetilde{A} = \left( \begin{matrix}1 & 1 & 1 \\ 0 & 1 & -1  \end{matrix} \right)\,.
$$
A straightforward computation shows that the element $\p_{\lambda_0}^{-1}$ in $M_{\widetilde{A}}^{0,0}$ is equal to $(\lambda_0^2 - 4 \lambda_1 \lambda_2 )\p_{\lambda_0} + \lambda_0$. One can see in this example that the expression involves the discriminant of the associated family of Laurent polynomials $(A =(1,-1))$:
\begin{align}
\varphi_A : \mbc^* \times \mbc^2 \lra \mbc_{\lambda_0} \times \mbc^2\, , \notag \\
(y, \lambda_1, \lambda_2) \mapsto (- \lambda_1 y - \lambda_2 \frac{1}{y}, \lambda_1, \lambda_2)\, .
\end{align}
\end{rem}

Up to now, only GKZ-systems $\mcm^{(\beta_0,\beta)}_{\widetilde{A}}$ with $\beta_0 \in \mbz$, occurred. We will see, that this is reflected by the fact that we looked at all fibers of the associated family of Laurent polynomials $\varphi_B$. If the matrix $A$ is homogeneous, we will remedy this fact by restricting to a hyperplane in $V = \mbc_{\lambda_0} \times W$ given by $\lambda_0 = 1$, which will give us direct sums of GKZ-systems $\mcm^\beta_A$ with $\beta \in \mbq^d$.

 In the rest of this section let $A$ be a $d \times n$ integer matrix with upper row $(1, \ldots ,1)$ which satisfies $\mbz A^d = \mbz^d$ and let $e= (e_1, \ldots ,e_d)$ with $e_i \in \mbn_{\geq 1}$. We define a matrix
\[
B:= \left( \begin{array}{c  c  c} e_1 & &  \\ & \ddots & \\ & & e_{d} \end{array} \right) \cdot A\, .
\]
From this data we will construct a family of affine varieties $p_B : \Lambda \ra W = \mbc^n$ and derive exact sequences similar to those in Theorem \ref{thm:genseq}. For this we will need the following lemma.

\begin{lem}\label{lem:invimage}
Let $i_1 : \{1\} \times W \lra V = \mbc \times W$ be the canonical inclusion. Then
\begin{enumerate}
\item The map $i_1$ is non-characteristic with respect to $\mcm_{ \widetilde{A}}^{(\beta_0, \beta)}$.
\item $i_1^+ \mcm_{ \widetilde{A}}^{(\beta_0, \beta)} \simeq \mcm_A^{\beta}\, ,$
\end{enumerate}
where $\widetilde{A}$ is given by \eqref{eq:tildeA}.
\end{lem}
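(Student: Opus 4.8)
\emph{Overall strategy.} The plan is to handle the two assertions separately, exploiting that throughout this part of the section $A$ — and hence $\widetilde A$ — is homogeneous. Part (1) will be a symbol computation, and (2) will follow by a factorization into an external tensor product of a one‑variable $\mcd$‑module with $\mcm^\beta_A$, so that (1) guarantees that $i_1^+$ of that tensor product is again a single module.

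\emph{Non‑characteristicity.} First I would note that, since the top row of $\widetilde A$ is $(1,\dots,1)$, the Euler operator $E_0-\beta_0=\sum_{i=0}^n\lambda_i\p_{\lambda_i}-\beta_0$ lies in the left ideal defining $\mcm^{(\beta_0,\beta)}_{\widetilde A}$; its principal symbol $\sum_{i=0}^n\lambda_i\xi_i$ therefore belongs to the characteristic ideal, so $\car(\mcm^{(\beta_0,\beta)}_{\widetilde A})\subseteq\{\sum_{i=0}^n\lambda_i\xi_i=0\}$ in $T^{*}V$. The conormal bundle $T^{*}_{\{1\}\times W}V$ consists of the covectors with $\lambda_0=1$ and $\xi_1=\dots=\xi_n=0$, on which $\sum_{i=0}^n\lambda_i\xi_i$ equals $\xi_0$; hence the intersection of the characteristic variety with this conormal bundle lies in the zero section, which is exactly (1). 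As a consequence $i_1^{+}\mcm^{(\beta_0,\beta)}_{\widetilde A}$ is a single $\mcd$‑module, namely $\mcm^{(\beta_0,\beta)}_{\widetilde A}/(\lambda_0-1)\mcm^{(\beta_0,\beta)}_{\widetilde A}$, and this is what must be identified with $\mcm^\beta_A$.

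\emph{Identification.} For (2) I would record two elementary consequences of the first row of $A$ being $(1,\dots,1)$: (i) the Euler operator $E_1$ of $A$ equals the Euler vector field $\Theta:=\sum_{i=1}^n\lambda_i\p_{\lambda_i}$ of $W$, so $\Theta$ acts on $\mcm^\beta_A$ as the scalar $\beta_1$; (ii) every $\underline l\in\mbl$ satisfies $\sum_i l_i=0$, so the relation lattice of $\widetilde A$ is $\{(0,\underline l):\underline l\in\mbl\}$ and the operators $\Box_{\widetilde{\underline l}}$ are literally the $\Box_{\underline l}$ of $A$. Thus, among the generators of the defining ideal of $\mcm^{(\beta_0,\beta)}_{\widetilde A}$, all of the $\Box_{\underline l}$ and the $E_k-\beta_k$ ($k=1,\dots,d$) lie in $\mcd_W\subset\mcd_V=\mcd_{\mbc_{\lambda_0}}\boxtimes\mcd_W$, and only $E_0-\beta_0=\lambda_0\p_{\lambda_0}+\Theta-\beta_0$ involves $\lambda_0,\p_{\lambda_0}$. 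Quotienting first by the $\mcd_W$‑relations yields $\mcd_{\mbc_{\lambda_0}}\boxtimes\mcm^\beta_A$; on this module left multiplication by $E_0-\beta_0$ reduces, by (i), to left multiplication by $(\lambda_0\p_{\lambda_0}+\beta_1-\beta_0)\boxtimes\id$, so
\[
\mcm^{(\beta_0,\beta)}_{\widetilde A}\;\simeq\;\mcn\boxtimes\mcm^\beta_A,\qquad\mcn:=\mcd_{\mbc_{\lambda_0}}/\mcd_{\mbc_{\lambda_0}}(\lambda_0\p_{\lambda_0}+\beta_1-\beta_0).
\]

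\emph{Conclusion and main difficulty.} Since $i_1=i_{\{1\}}\times\id_W$, the inverse image factors as $i_1^{+}\mcm^{(\beta_0,\beta)}_{\widetilde A}\simeq(i_{\{1\}}^{+}\mcn)\boxtimes\mcm^\beta_A$. The $\mcd_{\mbc_{\lambda_0}}$‑module $\mcn$ is $\mco$‑coherent of rank one on $\mbc\setminus\{0\}$ (the operator $\lambda_0\p_{\lambda_0}+\beta_1-\beta_0$ has order one with leading coefficient the unit $\lambda_0$), so $\{1\}\hookrightarrow\mbc_{\lambda_0}$ is non‑characteristic for $\mcn$ and $i_{\{1\}}^{+}\mcn$ is the one‑dimensional fibre $\mcn\otimes_{\mco_{\mbc_{\lambda_0}}}\mbc_{\lambda_0=1}\simeq\mbc$, in degree $0$; hence $i_1^{+}\mcm^{(\beta_0,\beta)}_{\widetilde A}\simeq\mbc\boxtimes\mcm^\beta_A\simeq\mcm^\beta_A$. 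I expect the only point requiring genuine care to be the passage to the external tensor decomposition — i.e. checking that, after killing the $\mcd_W$‑relations, the scalar action of $\Theta$ on $\mcm^\beta_A$ legitimately lets one replace $\Theta$ by $\beta_1$ inside $E_0-\beta_0$, and that this is compatible with $i_1^{+}$; everything else is either a symbol computation or the standard one‑variable fact about restricting $\mcd_{\mbc_{\lambda_0}}/\mcd_{\mbc_{\lambda_0}}(\lambda_0\p_{\lambda_0}+c)$ to a nonzero point. (If one prefers to avoid the tensor decomposition, one can instead compute directly that $\mcm^{(\beta_0,\beta)}_{\widetilde A}/(\lambda_0-1)\mcm^{(\beta_0,\beta)}_{\widetilde A}$ is cyclic over $\mcd_W$ on the class of $1$, with the class of $\p_{\lambda_0}\cdot 1$ re‑expressed via the relation $E_0-\beta_0$, and then match the resulting presentation with that of $\mcm^\beta_A$.)
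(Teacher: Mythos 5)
Your proof is correct, but it reaches both conclusions by routes that differ from the paper's. For part (1) the paper does not argue via the symbol of $E_0$: it invokes the explicit description of the singular locus of $\mcm^{(\beta_0,\beta)}_{\widetilde{A}}$ as the union of the discriminantal hypersurfaces $V(\tau_l)$ attached to the faces of the polytope of $\widetilde{A}$, and checks that each component either lies in $\{\lambda_0=0\}$ (when $\widetilde{\underline{a}}_0\in\tau_l$) or is a cylinder $p^{-1}(V_l)$ over $W$, hence has no conormal directions along $d\lambda_0$. Your observation that $\sigma(E_0-\beta_0)=\sum_{i=0}^n\lambda_i\xi_i$ already forces $\car(\mcm^{(\beta_0,\beta)}_{\widetilde{A}})\cap T^*_{\{1\}\times W}V$ into the zero section is more elementary and self-contained, and buys the same conclusion without citing the GKZ singular-locus theorem. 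For part (2) the paper sets $\lambda_0=1$ directly and identifies the presentation of $M^{(\beta_0,\beta)}_{\widetilde{A}}/(\lambda_0-1)M^{(\beta_0,\beta)}_{\widetilde{A}}$ with that of $M^\beta_A$ by using the relation $\p_{\lambda_0}+\sum_{i=1}^n\lambda_i\p_{\lambda_i}-\beta_0$ to eliminate $\p_{\lambda_0}$; your external-product decomposition $\mcm^{(\beta_0,\beta)}_{\widetilde{A}}\simeq\mcn\boxtimes\mcm^\beta_A$ is a slightly stronger structural statement from which the restriction is immediate, and it rests on exactly the same two inputs (the relation lattice of $\widetilde{A}$ is $\{(0,\underline{l})\}$, and $E_0=\lambda_0\p_{\lambda_0}+E_1$). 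One caveat on the point you yourself flag: the assertion that ``$\Theta$ acts on $\mcm^\beta_A$ as the scalar $\beta_1$'' is true only on the cyclic generator, not as an endomorphism of the module, so ``left multiplication by $E_0-\beta_0$ reduces to left multiplication by $(\lambda_0\p_{\lambda_0}+\beta_1-\beta_0)\boxtimes\id$'' is not literally correct as a statement about operators. The clean repair is to work at the level of left ideals: since $E_0-\beta_0=(\lambda_0\p_{\lambda_0}+\beta_1-\beta_0)+(E_1-\beta_1)$ identically in $\mcd_V$, the defining ideal of $\mcm^{(\beta_0,\beta)}_{\widetilde{A}}$ is generated by the $\mcd_W$-relations together with $\lambda_0\p_{\lambda_0}+\beta_1-\beta_0$, and the external tensor decomposition follows at once. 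With that rephrasing your argument is complete.
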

\begin{proof} 
Let $Q$ be the convex hull of the columns $\underline{\widetilde{a}}_0, \ldots , \underline{\widetilde{a}}_n$ of $\widetilde{A}$ in $\mbr^{d+1}$. Denote by $\tau_1, \ldots, \tau_s$ the faces of $Q$ (including $Q$ itself). The set of singular points of $\mcm_{ \widetilde{A}}^{(\beta_0, \beta)}$ is given by the union of the hypersurfaces $V(\tau_l)$ for $l=1, \ldots ,s$ (cf. \cite[Theorem 4]{GKZ1} and \cite[Remark 10.1.8]{GKZbook}) which are given by
\[
V(\tau_l) = \{ \underline{\lambda} \in V \mid \exists \underline{y} \in S \; \text{s.t.}\; f_{\underline{\lambda},\tau_l}(\underline{y}) = y_k \p_{k}f_{\underline{\lambda},\tau_l}(\underline{y}) = 0\;\; \text{for all}\;\; k=0,\ldots,d \}\, .
\] 
with $f_{\underline{\lambda},\tau_l}(\underline{y}):= \sum_{\widetilde{\underline{a}}_i \in \tau_l} \lambda_i \underline{y}^{\widetilde{\underline{a}}_i}$ and $\underline{y}^{\widetilde{\underline{a}}_i} := \prod_{k=0}^n y_k^{\widetilde{a}_{ki}}$.

\noindent There are now two possibilities. If $\widetilde{\underline{a}}_0 \in \tau_l$, then $V(\tau_l) \subset \{\lambda_0 = 0\}$ which follows from
\[
\lambda _0 = y_0\p_0f_{\underline{\lambda},\tau_l}(\underline{y}) - y_1\p_1 f_{\underline{\lambda},\tau_l}(\underline{y}) \overset{!}{=} 0.
\]
On the other hand, if $\underline{\widetilde{a}}_0 \notin \tau_l$ then $V(\tau_l) = p^{-1}(V_l)$, where $p: V = \mbc_{\lambda_0} \times W \ra W$ is the projection and $V_l$ is a hypersurface in $W$.  This shows that the restriction to $\{1\} \times W$ is non-characteristic. Thus we have
\[
\mch^0\left(i_1^+ \mcm_{ \widetilde{A}}^{(\beta_0, \beta)}\right) \simeq i_1^+ \mcm_{ \widetilde{A}}^{(\beta_0, \beta)}\,.
\]
Recall the definition of the generators of the GKZ-system from definition \ref{def:GKZ}. Because the first row of $A$ is equal to $(1, \ldots , 1)$ all operators $\Box_{l \in \mbl}$, where $\mbl$ is the lattice of relations of the matrix $\widetilde{A}$ are independent of $\p_{\lambda_0}$. Notice also that all Euler vector fields, except $E_0$, are independent of $\lambda_0 \p_{\lambda_0}$.
Working with the $D$-module of global sections instead of the actual $\mcd$-module, the inverse image can be written as
\[
M_{\widetilde{A}}^{(\beta_0, \beta)} / (\lambda_0-1) M_{\widetilde{A}}^{(\beta_0, \beta)}\,.
\]
As a $D_W$ module this is isomorphic to
\[
\mbc[\lambda_1, \ldots , \lambda_n]\langle \p_{\lambda_0}, \ldots ,\p_{\lambda_n}\rangle/ I \, 
\]
where the ideal $I$ is generated by the Euler fields $E_1, \ldots , E_d$, the box operators $\Box_{l \in \mbl}$ and the operator
\[
\p_{\lambda_0} + \sum_{i=1}^n \lambda_i \p_{\lambda_i}.
\]
But this module is isomorphic to $M_A^\beta$, which shows the claim.
\end{proof}

We now define the restriction of the family of Laurent polynomials in order to get a family of affine varieties whose Gau\ss-Manin system will be closely related to a direct sum of GKZ-system $\mcm_A^\beta$. Let
\[
\Lambda := \{(y_1, \ldots,y_d,\lambda_1, \ldots ,\lambda_d \in (S \times W) \mid F_B(\underline{y},\underline{\lambda})= 1 \}\, ,
\]
where $F_B: S\times W \ra \mbc_{\lambda_0} \times W$ is the first component of $\varphi_B$. Denote by $p_B: \Lambda \ra W$ the projection to the second factor. 
\begin{thm}\label{thm:genseqres}
Let $B$ and $A$ be as above and let $p_B: \Lambda \lra W$ be the corresponding family of affine varieties. Let 
\begin{align}
\sigma : (\mbq / \mbz)^{d} &\ra \mbq^{d}\setminus sRes(A)\, , \notag \\
\sigma': (\mbq / \mbz)^{d} &\ra \mbq^{d} \setminus \mbd sRes(A) \notag
\end{align}
be sections of the projection $p: \mbq^{d} \ra (\mbq/\mbz)^{d}$ and let $I := \frac{1}{e}\mbz \cap im(\sigma)$ resp. $I' := \frac{1}{e}\mbz \cap im(\sigma')$. Then we have the following exact sequences in $M_{rh}(\mcd_W)$:
\begin{align}
&0 \lra H^{d-1}(S,\mbc)\otimes \mco_W \lra \mch^{0}(p_{B,+} \mco_{\Lambda}) \lra \bigoplus_{\beta \in I} \mcm^{\beta}_{A} \lra H^{d}(S,\mbc)\otimes \mco_W \lra 0\, , \notag \\
&0 \lra H_{d}(S,\mbc)\otimes \mco_W \lra \bigoplus_{\beta' \in I'} \mcm^{\beta'}_{A}  \lra \mch^{0}(p_{B,\dag} \mco_\Lambda) \lra H_{d-1}(S,\mbc)\otimes \mco_W \lra 0\, . \notag
\end{align}
\end{thm}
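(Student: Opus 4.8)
The plan is to obtain both exact sequences by applying Theorem~\ref{thm:genseq} to the matrix $B=\diag(e_1,\dots,e_d)\cdot A$ and then restricting the resulting four-term sequences in $M_{rh}(\mcd_V)$, $V=\mbc_{\lambda_0}\times W$, along the hyperplane inclusion $i_1\colon\{1\}\times W\hookrightarrow V$. The key observation is that $\Lambda=\varphi_B^{-1}(\{1\}\times W)$, so that $p_B$ is the base change of $\varphi_B$ along $i_1$, and that $\mcm^{(\beta_0,\beta)}_{\widetilde A}$ restricts under $i_1^+$ to $\mcm^\beta_A$ by Lemma~\ref{lem:invimage}.

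First I would collect the geometric inputs. Since $\p_{\lambda_i}F_B=-\underline y^{\underline b_i}$ is nowhere vanishing on $S\times W$, the function $F_B$ is a submersion; hence $\Lambda$ is a smooth hypersurface of $S\times W$ and $i_\Lambda^+\mco_{S\times W}\simeq\mco_\Lambda$ for the inclusion $i_\Lambda\colon\Lambda\hookrightarrow S\times W$. Base change for $\mcd$-module direct images (\cite[Theorem~1.7.3]{Hotta}) applied to the cartesian square then gives $i_1^+\varphi_{B,+}\mco_{S\times W}\simeq p_{B,+}\mco_\Lambda$ in $D^b_{rh}(\mcd_W)$; applying $\mbd$, and using $\mbd\,i_1^+=i_1^\dag\,\mbd$ together with $\mbd\,\varphi_{B,+}\mco_{S\times W}=\varphi_{B,\dag}\mco_{S\times W}$ (as $\mbd\mco_{S\times W}\simeq\mco_{S\times W}$), one also obtains $i_1^\dag\varphi_{B,\dag}\mco_{S\times W}\simeq p_{B,\dag}\mco_\Lambda$.

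Next I would verify that $i_1$ is non-characteristic with respect to every module in the two sequences of Theorem~\ref{thm:genseq} for $B$. The outer terms are $\mco_V$-free; each $\mcm^{\widetilde\beta}_{\widetilde A}$ is non-characteristic by Lemma~\ref{lem:invimage}(1); and $\mch^0(\varphi_{B,+}\mco_{S\times W})$ is an extension of a submodule of $\bigoplus_{\widetilde\beta}\mcm^{\widetilde\beta}_{\widetilde A}$ by an $\mco_V$-free module, so — the characteristic variety being additive in short exact sequences — it is non-characteristic as well, and likewise $\mch^0(\varphi_{B,\dag}\mco_{S\times W})$. Hence $\mch^0 i_1^+$ is exact on both four-term sequences. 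Moreover, by Proposition~\ref{prop:pervcohGM} the complex $\varphi_{B,+}\mco_{S\times W}$ has $\mco_V$-free cohomology in degrees $\neq 0$ and non-characteristic cohomology in degree $0$ (and dually for $\varphi_{B,\dag}\mco_{S\times W}$), so a degeneration argument shows that $i_1^+$ commutes with $\mch^0$ on these complexes; combined with the base-change isomorphisms above — and, for the $\dag$-case, with the fact that $i_1^+\simeq i_1^\dag$ on non-characteristic modules — this gives $i_1^+\mch^0(\varphi_{B,+}\mco_{S\times W})\simeq\mch^0(p_{B,+}\mco_\Lambda)$ and $i_1^+\mch^0(\varphi_{B,\dag}\mco_{S\times W})\simeq\mch^0(p_{B,\dag}\mco_\Lambda)$. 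Since also $i_1^+(H^k(S,\mbc)\otimes\mco_V)\simeq H^k(S,\mbc)\otimes\mco_W$, applying $i_1^+$ to the sequences of Theorem~\ref{thm:genseq} will produce the exact sequences of the theorem, provided the parameter sets are identified correctly.

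Matching the parameter sets is the one genuinely delicate point, and the place where I expect the work to go. For the first sequence I would choose the section $\widetilde\sigma\colon(\mbq/\mbz)^{d+1}\to\mbq^{d+1}\setminus sRes(\widetilde A)$ entering Theorem~\ref{thm:genseq} compatibly with the given $\sigma\colon(\mbq/\mbz)^d\to\mbq^d\setminus sRes(A)$: for $\bar c\in\frac1e\mbz^d/\mbz^d$ set $\widetilde\sigma(0,\bar c)=(\beta_0(\bar c),\sigma(\bar c))$ with $\beta_0(\bar c)\in\mbz$ so large that $(\beta_0(\bar c),\sigma(\bar c))\notin sRes(\widetilde A)$ — possible by Lemma~\ref{lem:betalift}, as $\sigma(\bar c)\notin sRes(A)$ — and extend $\widetilde\sigma$ over the remaining classes using the remark after Theorem~\ref{thm:genseq}. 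Then $I_{\widetilde A}=(\mbz\times\frac1e\mbz^d)\cap im(\widetilde\sigma)$ projects bijectively onto $I=\frac1e\mbz^d\cap im(\sigma)$, and by Lemma~\ref{lem:invimage}(2) $i_1^+\bigl(\bigoplus_{\widetilde\beta\in I_{\widetilde A}}\mcm^{\widetilde\beta}_{\widetilde A}\bigr)\simeq\bigoplus_{\beta\in I}\mcm^\beta_A$; this yields the first exact sequence. The second follows identically from the second sequence of Theorem~\ref{thm:genseq}, this time with a section into $\mbq^{d+1}\setminus\mbd sRes(\widetilde A)$ compatible with the given $\sigma'$, obtained by lifting with $\beta_0'$ sufficiently negative so that the lift lands in $-(\mbr_+\widetilde A)^\circ$ (which is disjoint from $\mbd sRes(\widetilde A)$), and with $\mcm^{\widetilde\beta'}_{\widetilde A}$ restricting to $\mcm^{\beta'}_A$. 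One should also note that the homogenized matrix attached to $B$ by the construction of Section~\ref{sec:LaurentPolGKZ} agrees with $\widetilde A$ up to a $GL_{d+1}(\mbz)$-change of basis, which merely relabels the GKZ parameters.
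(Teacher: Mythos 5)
Your proposal is correct and follows essentially the same route as the paper: lift the sections $\sigma,\sigma'$ to sections for $\widetilde A$ via Lemma~\ref{lem:betalift}, apply Theorem~\ref{thm:genseq}, check that $i_1$ is non-characteristic for all terms using Lemma~\ref{lem:invimage}(1), identify $i_1^+\mcm^{\widetilde\beta}_{\widetilde A}\simeq\mcm^\beta_A$ by Lemma~\ref{lem:invimage}(2), and conclude with base change along the cartesian square defining $p_B$. You supply somewhat more detail than the paper on why the Gau\ss--Manin term is non-characteristic and why $i_1^+$ commutes with $\mch^0$, but the argument is the same.
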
 
\begin{proof}
First notice that we can lift the sections $\sigma$, $\sigma'$ to sections
\begin{align}
\widetilde{\sigma} : (\mbq / \mbz)^{d+1} &\ra \mbq^{d+1}\setminus sRes(\widetilde{A})\, , \notag \\
\widetilde{\sigma}': (\mbq / \mbz)^{d+1} &\ra \mbq^{d+1} \setminus \mbd sRes(\widetilde{A}) \notag
\end{align}
by Lemma \ref{lem:betalift} resp. by the definition of $\mbd sRes(A)$. By Theorem \ref{thm:genseq} we have exact sequences in $M_{rh}(\mcd_V)$:
\begin{align}
&0 \lra H^{d-1}(S,\mbc)\otimes \mco_V \lra \mch^{0}(\varphi_{B,+} \mco_{S \times W}) \lra \bigoplus_{\widetilde{\beta} \in \widetilde{I}} \mcm^{\widetilde{\beta}}_{\widetilde{A}} \lra H^{d}(S,\mbc)\otimes \mco_V \lra 0\, , \notag \\
&0 \lra H_{d}(S,\mbc)\otimes \mco_V \lra \bigoplus_{\widetilde{\beta}' \in \widetilde{I}'} \mcm^{\widetilde{\beta}'}_{\widetilde{A}}  \lra \mch^{0}(\varphi_{B,\dag} \mco_{S \times W}) \lra H_{d-1}(S,\mbc)\otimes \mco_V \lra 0\, , \notag
\end{align}
with $\widetilde{I} := (\mbz \times \frac{1}{e}\mbz) \cap im(\widetilde{\sigma})$ and $\widetilde{I}' := \mbz \times \frac{1}{e}\mbz \cap im(\widetilde{\sigma}')$. Notice that we have $Char(\mch^{0}(\varphi_{B,+} \mco_{S \times W}))=Char(\bigoplus_{\widetilde{\beta} \in \widetilde{I}}\mcm^{\widetilde{\beta}}_{\widetilde{A}})$ resp. $Char(\bigoplus_{\widetilde{\beta}' \in \widetilde{I}'} \mcm^{\widetilde{\beta}'}_{\widetilde{A}})=Char(\mch^{0}(\varphi_{B,\dag} \mco_{S \times W}))$. Therefore by Lemma \ref{lem:invimage} (1) the map $i_1$ is non-characteristic with respect to all terms above. Recall that we have $i_1^+ (\bigoplus_{\widetilde{\beta} \in \widetilde{I}}\mcm^{\widetilde{\beta}}_{\widetilde{A}}) \simeq \bigoplus_{\beta \in I}\mcm^{\beta}_{A}$ resp. $i_1^+ (\bigoplus_{\widetilde{\beta}' \in \widetilde{I}'}\mcm^{\widetilde{\beta}'}_{\widetilde{A}}) \simeq \bigoplus_{\beta' \in I'}\mcm^{\beta'}_{A}$ by Lemma \ref{lem:invimage} (2). Notice that we have the following cartesian diagram
\[
\xymatrix{\Lambda \ar[d]_{p_B} \ar[r] & S \times W \ar[d]_{\varphi_B} \\ \{1\} \times W \ar[r]^{i_1} & V}
\]
Using base change with respect to $i_1$ we get $i_1^+ \mch^0(\varphi_{B,+}\mco_{S \times W}) \simeq \mch^0(p_{B,+} \mco_\Lambda)$ resp.\\ $i_1^+ \mch^0(\varphi_{B,\dag}\mco_{S \times W}) \simeq \mch^0(p_{B,\dag} \mco_\Lambda)$. This shows the claim.
\end{proof}

\begin{rem}
Restricting the first exact sequence of Theorem \ref{thm:genseqres} to a generic point $\underline{\lambda} \in W$ gives us the exact sequence of mixed Hodge structures:
\[
0 \lra H^{d-1}(S,\mbc) \lra H^{d-1}(F_B^{-1}(1,\underline{\lambda}),\mbc) \lra H^d(S, F_B^{-1}(1,\underline{\lambda}),\mbc) \lra H^{d}(S,\mbc) \lra 0\, ,
\]
which is equation $(55)$ from \cite{Sti}. Setting $B=A$ and  $\beta=0$, this recovers Theorem 8 of \cite{Sti}, which says that the GKZ-system $\mcm^0_A$ restricted to its smooth locus is isomorphic to the cohomology bundle $H^d(S, F_B^{-1}(1,\underline{\lambda}),\mbc)$.
\end{rem}
\newpage
\section{Hypergeometric systems and Mixed Hodge Modules}\label{sec:MHM}
In this section we show that we can endow a GKZ-hypergeometric system with integer parameter with a structure of a mixed Hodge module in the sens of \cite{SaitoMHM}. First we show that the exact sequences in Theorem \ref{thm:genseq} resp. Corollary \ref{cor:exseqA} are actually exact sequences in the category of mixed Hodge modules. For this we have to translate their proofs into this category.

\noindent With regard to Theorem \ref{thm:genseq} resp. Corollary \ref{cor:exseqA} this might be expected as the other three terms of the exact sequences carry a natural structure of a mixed Hodge module (the two outer terms are actually (constant) variations of mixed Hodge structures). However we can not conclude directly that the (direct sum of) GKZ-systems carry a mixed Hodge module structure because the category of mixed Hodge modules is not stable by extension.

For a smooth algebraic variety $X$, we denote by $MHM(X)$ the abelian category of mixed Hodge modules and by $D^b MHM(X)$ the corresponding bounded derived category. The forgetful functors to the bounded derived category of algebraic, constructible sheaves of $\mbq$-vector spaces resp. regular holonomic $\mcd$-modules are denoted by 
\[
rat : D^b MHM(X) \lra D^b_c(X,\mbq)
\]
resp.
\[
Dmod: D^b MHM(X) \lra D^b_{rh}(\mcd_X)
\]
\noindent For each morphism $f:X \ra Y$ between complex algebraic varieties, there are induced functors $f_*,f_!: D^b MHM(X) \ra D^b MHM(Y)$, $f^*, f^!: D^b MHM(Y) \ra D^b MHM(X)$ which are interchanged by $\mbd$ and which lift the analogous functors $f_+, f_\dag, f^\dag-[d],  f^+[d]$ on $D^b_{rh}(\mcd_X)$ resp. $Rf_*, f_!, f^{-1}, f^!$ on $D^b_c(X)$, where $d:= \dim X -\dim Y$.

\noindent Let $\mbq^{H}_{pt}$ be the unique mixed Hodge structure of weight $0$ with $Gr^W_i = 0$ for $i \neq 0$ and underlying vectorspace $\mbq$. Denote by $a_{X} : X \lra \{pt\}$ the map to the point and set $\mbq^H_X := a_X^* \mbq^H_{pt}$.

\noindent Recall that by \cite{SaitoMHM} (4.4.3) a base change theorem holds also in the category of algebraic mixed Hodge modules.
 
\noindent Notice that the various functors $\mcr, \mcr_{cst}, \mcr^\circ_{(c)}$ are just a concatenation of (proper) direct image functors and (exceptional) inverse image functors, which means they are also defined in the derived category of (algebraic) mixed Hodge modules. Define the following functors from $D^b MHM(\mbp(V'))$ to $D^b MHM(V)$ by
\begin{align}
{^*}\mcr(M) &:= \pi_{2*}^Z (\pi_1^Z)^*M \simeq \pi_{2 *} i_{Z *} i_Z^* \pi_1^*M\, , \notag \\
{^*}\mcr_{cst}(M) &:= \pi_{2*} \pi_1^*M\, . \notag
\end{align}
Notice that unlike in the category of $\mcd$-modules ${^*}\mcr$ and ${^*}\mcr_{cst}$ commute with the duality functor $\mbd$ only up to shift and Tate twist. We therefore define the following functors from $D^b MHM(\mbp(V'))$ to $D^b MHM(V)$:
\begin{align}
{^!}\mcr(M) &:= \mbd \circ {^*}\mcr \circ \mbd \, (M) \simeq \pi_{2*}^Z (\pi_1^Z)^! M \simeq \pi_{2 *} i_{Z *} i_Z^! \pi_1^! M \notag \\
{^!}\mcr_{cst}(M) &:= \mbd \circ {^*}\mcr_{cst} \circ \mbd\,(M)\simeq  \pi_{2*} \pi_1^! M\, . \notag
\end{align}
Finally we define ${^*}\mcr^\circ_c : D^b MHM(\mbp(V')) \ra D^b MHM(V)$ by
\[
{^*}\mcr^\circ_c(M) := \pi_{2 !}^U (\pi_1^U)^*(M) \simeq \pi_{2 *} j_{U !} j_U^* \pi_1^* (M)
\]
and ${^!}\mcr^\circ : D^b MHM(\mbp(V') \ra D^b MHM(V)$ by 
\[
{^!}\mcr^\circ(M):= \mbd \circ {^*}\mcr^\circ_c \circ \mbd\, (M) \simeq  \pi_{2 *}^U (\pi_1^U)^!(M) \simeq \pi_{2 *} j_{U *} j_U^! \pi_1^! (M)\, .
\]
Using these definitions we get the triangles equivalent to Proposition \ref{prop:Radontriangle}.
\begin{prop}
Let $M \in  D^b MHM (\mbp(V'))$, we have the following triangles
\begin{align}
{^!}\mcr(M) \lra {^!}\mcr_{cst}(M) \lra {^!}\mcr^\circ(M) \overset{+1}{\lra}\, , \notag \\
{^*}\mcr^\circ_{c}(M) \lra {^*}\mcr_{cst}(M) \lra {^*}\mcr(M) \overset{+1}{\lra}\, , \notag
\end{align}
where the second triangle is dual to the first.
\end{prop}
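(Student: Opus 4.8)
The plan is to run the proof of Proposition~\ref{prop:Radontriangle} again, this time in $D^b MHM$, replacing the $\mcd$-module adjunction triangle by the localization triangles of mixed Hodge modules. Attached to the closed embedding $i_Z$ with open complement $j_U$ inside $\mbp(V')\times V$, Saito's formalism \cite{SaitoMHM} furnishes in $D^b MHM(\mbp(V')\times V)$ the two distinguished triangles
\[
j_{U!}\,j_U^{*} \lra \id \lra i_{Z*}\,i_Z^{*} \overset{+1}{\lra}\,,\qquad i_{Z*}\,i_Z^{!} \lra \id \lra j_{U*}\,j_U^{*}\overset{+1}{\lra}\,,
\]
the second being the image of the first under the duality functor $\mbd$ (using $\mbd\, i_{Z*}\simeq i_{Z*}\,\mbd$, $\mbd\, j_{U!}\simeq j_{U*}\,\mbd$, $\mbd\, i_Z^{*}\simeq i_Z^{!}\,\mbd$ and $\mbd\, j_U^{*}\simeq j_U^{*}\,\mbd$). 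These are the only external inputs.

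First I would establish the second triangle of the proposition. Apply the left-hand localization triangle above to $\pi_1^{*}M\in D^b MHM(\mbp(V')\times V)$, obtaining the distinguished triangle
\[
j_{U!}\,j_U^{*}\pi_1^{*}M \lra \pi_1^{*}M \lra i_{Z*}\,i_Z^{*}\pi_1^{*}M \overset{+1}{\lra}\,,
\]
and then apply the triangulated functor $\pi_{2*}$. Because $\pi_2$ is proper ($\mbp(V')$ being projective) we have $\pi_{2!}\simeq\pi_{2*}$, and since $\pi_2^{U}=\pi_2\circ j_U$ and $(\pi_1^{U})^{*}=j_U^{*}\pi_1^{*}$, the left term becomes $\pi_{2*}j_{U!}j_U^{*}\pi_1^{*}M\simeq\pi_{2!}^{U}(\pi_1^{U})^{*}M={^*}\mcr^\circ_c(M)$, the middle term is $\pi_{2*}\pi_1^{*}M={^*}\mcr_{cst}(M)$ by definition, and the right term is $\pi_{2*}i_{Z*}i_Z^{*}\pi_1^{*}M={^*}\mcr(M)$ by the isomorphism built into the definition of ${^*}\mcr$. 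This is exactly
\[
{^*}\mcr^\circ_{c}(M) \lra {^*}\mcr_{cst}(M) \lra {^*}\mcr(M) \overset{+1}{\lra}\,.
\]

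For the first triangle I would simply dualize. Substituting $\mbd M$ for $M$ in the triangle just obtained and applying $\mbd$, which is an anti-autoequivalence of $D^b MHM(V)$ sending distinguished triangles to distinguished triangles, produces by the very definitions ${^!}\mcr:=\mbd\circ{^*}\mcr\circ\mbd$, ${^!}\mcr_{cst}:=\mbd\circ{^*}\mcr_{cst}\circ\mbd$, ${^!}\mcr^\circ:=\mbd\circ{^*}\mcr^\circ_c\circ\mbd$ the distinguished triangle
\[
{^!}\mcr(M) \lra {^!}\mcr_{cst}(M) \lra {^!}\mcr^\circ(M) \overset{+1}{\lra}\,,
\]
and the asserted duality between the two triangles is then tautological. (Equivalently, this triangle can be produced directly by applying $\pi_{2*}$ to the right-hand localization triangle evaluated at $\pi_1^{!}M$, using $j_U^{!}=j_U^{*}$ for the open immersion $j_U$.)

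I do not expect a genuine obstacle: the statement is formal once the six-functor package on $D^b MHM$ from \cite{SaitoMHM} is available. The only point to watch is the weight and Tate-twist bookkeeping --- unlike their $\mcd$-module shadows, the functors ${^*}\mcr,\ {^*}\mcr_{cst}$ on mixed Hodge modules do not commute with $\mbd$ up to a mere shift, which is precisely why the ${^!}$-versions had to be introduced as separate functors; consequently one must keep the $*$- and $!$-type pull-backs and push-forwards ($i_Z^{*}$ versus $i_Z^{!}$, $j_{U!}$ versus $j_{U*}$, $\pi_{2*}$ versus $\pi_{2!}$) rigorously distinct throughout, which is exactly what the computation above does.
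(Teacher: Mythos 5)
Your proof is correct and follows essentially the same route as the paper: the paper likewise derives the triangles by running the $\mcd$-module argument of Proposition \ref{prop:Radontriangle} inside $D^b MHM$, invoking the localization (adjunction) triangles of \cite[(4.4.1)]{SaitoMHM} for the pair $(i_Z, j_U)$ applied to $\pi_1^{*}M$ (resp.\ $\pi_1^{!}M$) and obtaining the other triangle by duality. The only cosmetic difference is that you establish the ${^*}$-triangle first and dualize to get the ${^!}$-one, whereas the paper states them in the opposite order; the content is identical.
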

\begin{proof}
The proof is the same as in Proposition \ref{prop:Radontriangle} using \cite[(4.4.1)]{SaitoMHM}.
\end{proof}

\begin{defn}\label{def:HodgeGKZ}
Let $\widetilde{A}$, $I$ and $I'$ be as in Theorem \ref{thm:genseq}. Define the following objects in $MHM(V)$:
\begin{align}
(\,\bigoplus_{\widetilde{\beta} \in I} \mcm^{\widetilde{\beta}}_{\widetilde{A}}\,)^H &:= \mch^{d+n+1}( {^*}\mcr^{\circ}_c(g_* \mbq^H_S)) \notag \\
(\,\bigoplus_{\widetilde{\beta} \in I'} \mcm^{-\widetilde{\beta}'}_{\widetilde{A}})^H &:= \mch^{-d-n-1}\!( {^!}\mcr^{\circ}_c(g_! \mbd\mbq^H_S)) \notag
\end{align}
\end{defn}

\begin{prop}\label{prop:MHMgenseq}
Let $B$, $\widetilde{A}$, $I$ and $I'$ be as in Theorem \ref{thm:genseq}. We have the following exact sequences in $MHM(V)$:
{\small
\begin{align}
&0 \ra \mch^{n+1}(H^{d-1}(S,\mbc)\otimes \mbq^H_V) \ra \mch^{d+n}(\varphi_{B,*} \mbq_{S \times W}^H) \ra (\bigoplus_{\widetilde{\beta} \in I} \mcm^{\widetilde{\beta}}_{\widetilde{A}})^H \lra \mch^{n+1}(H^{d}(S,\mbc)\otimes \mbq^H_V) \ra 0\, , \notag \\
&0\! \ra\! \mch^{-n-1}( H_{d}(S,\mbc)\otimes \mbd\mbq^H_V)\! \ra \!\!(\bigoplus_{\widetilde{\beta} \in I'} \mcm^{-\widetilde{\beta}'}_{\widetilde{A}})^H\!  \ra\! \mch^{-d-n}(\varphi_{B,!} \mbd\mbq^H_{S \times W})\! \ra\! \mch^{-n-1}(H_{d-1}(S,\mbc)\otimes \mbd\mbq^H_V)\! \ra\! 0\, . \notag
\end{align}
} In particular we have
\begin{align}
Dmod( (\,\bigoplus_{\widetilde{\beta} \in I} \mcm^{\widetilde{\beta}}_{\widetilde{A}}\,)^H ) &\simeq (\,\bigoplus_{\widetilde{\beta} \in I} \mcm^{\widetilde{\beta}}_{\widetilde{A}}\,)\, , \notag \\
Dmod ((\,\bigoplus_{\widetilde{\beta} \in I'} \mcm^{-\widetilde{\beta}'}_{\widetilde{A}})^H) &\simeq (\,\bigoplus_{\widetilde{\beta} \in I'} \mcm^{-\widetilde{\beta}'}_{\widetilde{A}})\, . \notag
\end{align}
\end{prop}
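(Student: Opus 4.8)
The strategy is to mirror, step by step, the $\mcd$-module argument that produced the exact sequences of Proposition \ref{prop:pervcohGM} and Theorem \ref{thm:genseq}, carrying everything along the forgetful functor $Dmod$. The first task is to establish the mixed-Hodge-module analogues of the geometric identifications in Proposition \ref{lem:Step2}: namely that ${^*}\mcr(g_* \mbq^H_S)$ computes (a shift and twist of) $\varphi_{B,*} \mbq^H_{S \times W}$, that ${^*}\mcr_{cst}(g_* \mbq^H_S)$ has cohomology the constant variations $H^{d+i}(S,\mbc) \otimes \mbq^H_V$, and that ${^*}\mcr^\circ_c(g_* \mbq^H_S)$ carries the Hodge structure we have \emph{defined} to be $(\bigoplus_{\widetilde{\beta}\in I}\mcm^{\widetilde{\beta}}_{\widetilde{A}})^H$. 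Each of these is proved exactly as in the $\mcd$-module case, using the base-change theorem in $MHM$ (\cite{SaitoMHM} (4.4.3)) in place of $\mcd$-module base change, and the fact that $Dmod$ intertwines $f_*,f_!,f^*,f^!$ with $f_+,f_\dag,f^\dag[-d],f^+[d]$; the only bookkeeping is tracking the shifts so that the $Dmod$ image of $\mch^{d+n+1}({^*}\mcr^\circ_c(g_*\mbq^H_S))$ is precisely $\mch^0(\mcr^\circ_c(g_+\mco_S)) \simeq \FL(h_+\mco_T)$, which by the proof of Theorem \ref{thm:genseq} is $\bigoplus_{\widetilde{\beta}\in I}\mcm^{\widetilde{\beta}}_{\widetilde{A}}$.

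Next I would take the long exact sequence in $MHM$-cohomology of the triangle ${^*}\mcr^\circ_c(g_*\mbq^H_S) \to {^*}\mcr_{cst}(g_*\mbq^H_S) \to {^*}\mcr(g_*\mbq^H_S) \overset{+1}{\to}$. One must first check the vanishing statements underlying Proposition \ref{prop:pervcohGM}: that $\varphi_B$ is affine, so $\varphi_{B,*}\mbq^H_{S\times W}$ lives in nonpositive perverse degrees, and that ${^*}\mcr^\circ_c(g_*\mbq^H_S)$ is concentrated in a single degree (because $h$ is affine and quasi-finite, Lemma \ref{lem:Step1}(1), so $h_*\mco_T$ is a single module, and $\FL$ is $t$-exact on monodromic objects — the monodromy here coming from Lemma \ref{lem:GKZmono}). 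Granting these, the long exact sequence collapses to the asserted $4$-term exact sequence, with the two outer terms being the appropriate cohomologies of the constant object ${^*}\mcr_{cst}(g_*\mbq^H_S) \simeq a_V^* a_{S*} \mbq^H_S$, i.e. $\mch^{n+1}(H^{d-1}(S,\mbc)\otimes\mbq^H_V)$ and $\mch^{n+1}(H^d(S,\mbc)\otimes\mbq^H_V)$. The second exact sequence is obtained by applying $\mbd$ throughout (using the definition of the superscript-$!$ Radon functors as $\mbd\circ({^*}\text{-version})\circ\mbd$ and the duality-compatibility of $g_!\mbd\mbq^H_S$), exactly as the second triangle of Proposition \ref{prop:Radontriangle} is dual to the first.

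Finally, the ``In particular'' clause is immediate once the exact sequences are in place: applying $Dmod$ to the first $MHM$-sequence yields a $4$-term exact sequence of $\mcd_V$-modules whose outer and second terms are identified with the corresponding terms of the first sequence of Theorem \ref{thm:genseq} (by the geometric identifications above and the compatibility of $Dmod$ with the constant objects and with $\varphi_{B,+}$); the five lemma then forces $Dmod((\bigoplus_{\widetilde{\beta}\in I}\mcm^{\widetilde{\beta}}_{\widetilde{A}})^H) \simeq \bigoplus_{\widetilde{\beta}\in I}\mcm^{\widetilde{\beta}}_{\widetilde{A}}$, and symmetrically for $I'$. The step I expect to be the main obstacle is the careful verification that all the shifts and Tate twists in the definitions of ${^*}\mcr$, ${^*}\mcr_{cst}$, ${^*}\mcr^\circ_c$ and their $!$-counterparts are chosen so that $Dmod$ of the displayed cohomology objects lands in perverse degree $0$ and matches the $\mcd$-module computation on the nose — in particular making sure the $MHM$ triangle is genuinely the lift of the $\mcd$-module triangle \eqref{eq:Radtriangle} rather than a shift of it, and that the monodromicity hypothesis needed for $t$-exactness of $\FL$ is available for $g_*\mbq^H_S$ via the homogeneity of $\widetilde{A}$.
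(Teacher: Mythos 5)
Your proposal follows essentially the same route as the paper: translate the $\mcd$-module proof of Theorem \ref{thm:genseq} into $MHM(V)$ by replacing $\mco_S$ with $\mbq^H_S$, re-establish the identifications of Proposition \ref{lem:Step2} via base change in $MHM$ while tracking the degree shifts coming from Saito's conventions, take the long exact sequence of the Radon triangle (with the vanishing/concentration statements imported from the $\mcd$-module level), and obtain the second sequence by duality. The only cosmetic difference is that the paper reads off the ``in particular'' clause directly from Definition \ref{def:HodgeGKZ} and the chain $\mch^0(\mcr^\circ_c(g_+\mco_S))\simeq\FL(h_+\mco_T)\simeq\bigoplus_{\widetilde{\beta}\in I}\mcm^{\widetilde{\beta}}_{\widetilde{A}}$ rather than invoking the five lemma, but both are valid.
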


\begin{proof}
Recall that we derived the first exact sequence of Theorem \ref{thm:genseq} by taking the long exact cohomology sequence of the second triangle in \eqref{eq:Radtriangle}:
{%\small
$$
\xymatrix{H^{d-1}(S,\mbc)\otimes \mco_V \ar[r]& \mch^0(\varphi_{B,+} \mco_{S \times W}) \ar[r] & \bigoplus_{\widetilde{\beta} \in I} \mcm^{\widetilde{\beta}}_{\widetilde{A}} \ar[r] & H^d(S,\mbc)\otimes \mco_V \\ \mch^{-1}( \mcr_{cst}(g_+ \mco_S)) \ar[u]^{\simeq} \ar[r] & \mch^{0}( \mcr(g_+ \mco_S)) \ar[r] \ar[u]^{\simeq} & \mch^{0}( \mcr^{\circ}_c(g_+ \mco_S)) \ar[r] \ar[u]^{\simeq} & \mch^{0}( \mcr_{cst}(g_+ \mco_S)) \ar[u]^{\simeq} }
$$
}
\noindent where we used Proposition \ref{lem:Step2} 1., 2. for the first, second and fourth isomorphism. For the third isomorphism we used the following isomorphisms
\[
\bigoplus_{\widetilde{\beta} \in I} \mcm_{\widetilde{A}}^{\widetilde{\beta}} \simeq \textup{FL}(h_+ \mco_{T}) \simeq \mcr^\circ_c(g_+ \mco_{S})\, ,
\] 

\noindent To show that the lower sequence is a sequence of mixed Hodge modules we replace the $\mcd$-module $\mco_S$ with the mixed Hodge module $\mbq^{H}_{S} := a_{S}^{*} \mbq^{H}_{pt}$ and apply the corresponding functors in the (derived) category of mixed Hodge modules. Notice that there is a subtle point. In Saito's theory $\mbq^H_X$ lies in degree $\dim X$ and for $f: X \ra Y$ the functors $f^*, f^!$ correspond to the functors $f^\dag[-\dim X + \dim Y]$ resp. $f^+[\dim X - \dim Y]$ on the level of $\mcd$-modules.
If we translate the proofs above into the category of mixed Hodge modules and take these shifts into account we get
{\small
$$
\xymatrix@C=12pt{\mch^{n+1}(H^{d-1}(S,\mbc)\otimes \mbq^H_V) \ar[r]& \mch^{d+n}(\varphi_{B,*} \mbq^H_{S \times W}) \ar[r] & \bigoplus_{\widetilde{\beta} \in I} \mcm^{\widetilde{\beta}}_{\widetilde{A}} \ar[r] & \mch^{n+1}(H^d(S,\mbc)\otimes \mbq^H_V) \\ 
\mch^{d+n}( {^*}\mcr_{cst}(g_* \mbq^H_S)) \ar[u]^{\simeq} \ar[r] & \mch^{d+n}( {^*}\mcr(g_* \mbq^H_S)) \ar[r] \ar[u]^{\simeq} & \mch^{d+n+1}( {^*}\mcr^{\circ}_c(g_* \mbq^H_S)) \ar[r] \ar[u]^{\simeq} & \mch^{d+n+1}( {^*}\mcr_{cst}(g_* \mbq^H_S)) \ar[u]^{\simeq} }
$$
}
The lower sequence is an exact sequence of mixed Hodge modules by construction. If we induce the mixed Hodge module structure of $\mch^{d+n+1}( {^*}\mcr^{\circ}_c(g_* \mbq^H_S))$ on $\bigoplus_{\widetilde{\beta} \in I} \mcm^{\widetilde{\beta}}$  the upper sequence becomes a sequence of mixed Hodge modules, too. The statement for the second sequence follows if we dualize the two sequences above:
{\small
$$
\xymatrix@C=6pt{\mch^{-n-1}(H_{d}(S,\mbc)\!\otimes\! \mbd\mbq^H_V) \ar[r]& \bigoplus_{\widetilde{\beta}' \in I'} \mcm^{-\widetilde{\beta}'}_{\widetilde{A}} \ar[r] & \mch^{-d-n}(\varphi_{B,!} \mbd\mbq^H_{S \times W}) \ar[r] & \mch^{-n-1}\!(H_{d-1}(S,\mbc)\!\otimes\! \mbd\mbq^H_V) \\ 
\mch^{-d-n-1}\!( {^!}\mcr_{cst}(g_! \mbd\mbq^H_S)) \ar[u]^{\simeq} \ar[r] & \mch^{-d-n-1}\!( {^!}\mcr^{\circ}_c(g_! \mbd\mbq^H_S)) \ar[r] \ar[u]^{\simeq} & \mch^{-d-n}( {^!}\mcr(g_! \mbd\mbq^H_S))  \ar[r] \ar[u]^{\simeq} & \mch^{-d-n}( {^!}\mcr_{cst}(g_! \mbd\mbq^H_S)), \ar[u]^{\simeq} }
$$
}
where we have used that $\mch^j(a_{S *}a_S^* \mbq^H_{pt}) \simeq H^j(S, \mbc)$ and $\mch^j(a_{S !}a_S^! \mbq^H_{pt}) \simeq H_{-j}(S, \mbc)$ as an isomorphism of mixed Hodge structures.
\end{proof}

\begin{prop}\label{prop:MHMexseqA}
Let $A$ be an integer $d \times n$-matrix with $\mbz A = \mbz^d$.  For every $\widetilde{\beta}, \widetilde{\beta}' \in \mbz^{d+1}$ with $\widetilde{\beta} \notin sRes(A)$ resp. $\widetilde{\beta}' \notin \mbd sRes(A)$ we have the following exact sequences in $MHM(V)$:
{\small
\[ 
0 \ra \mch^{n+1}(H^{d-1}(S,\mbc)\otimes \mbq^H_V) \lra \mch^{d+n}(\varphi_{A,*} \mbq^H_{S \times W}) \lra  (\mcm^{\widetilde{\beta}}_{\widetilde{A}})^H \lra \mch^{n+1}(H^{d}(S,\mbc)\otimes \mbq^H_V) \ra 0\, .
\]
\[
0 \ra \mch^{-n-1}( H_{d}(S,\mbc)\otimes \mbd\mbq^H_V) \ra  (\mcm^{-\widetilde{\beta}'}_{\widetilde{A}})^H  \ra \mch^{-d-n}(\varphi_{B,!} \mbd\mbq^H_{S \times W}) \ra \mch^{-n-1}(H_{d-1}(S,\mbc)\otimes \mbq^H_V) \ra 0\, .
\]
}
\end{prop}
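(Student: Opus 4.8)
The plan is to specialize the argument of Proposition \ref{prop:MHMgenseq} to the case $B=A$, exactly as Corollary \ref{cor:exseqA} specializes Theorem \ref{thm:genseq}. First I would record that, since $\mbz A=\mbz^d$, all elementary divisors of $A$ equal $1$; hence in the factorization $B=C\cdot D_1\cdot A$ underlying Lemma \ref{lem:FLcompare} the factor $D_1$ is the identity, $e=(1,\dots,1)$, and the index set $I_e$ of that lemma collapses to the single point $\{0\}$. Thus $f_{D_1,+}\mco_T\simeq\mco_T$ and the direct sums in Lemma \ref{lem:FLcompare} degenerate to $\FL(h_+\mco_T)\simeq\FL(h_{\widetilde A,+}\mco_T)$ and $\FL(h_\dag\mco_T)\simeq\FL(h_{\widetilde A,\dag}\mco_T)$. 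Using $\mco_T\simeq\mco_T\underline{y}^{\widetilde\alpha}$ for any $\widetilde\alpha\in\mbz^{d+1}$ together with Theorem \ref{thm:SW} (and, for the dual side, Lemma \ref{lem:GKZmono} and Proposition \ref{prop:dualSW}), this yields $\FL(h_+\mco_T)\simeq\mcm^{\widetilde\beta}_{\widetilde A}$ for \emph{every} integer $\widetilde\beta$ outside $sRes(\widetilde A)$, and correspondingly $\FL(h_\dag\mco_T)\simeq\mcm^{-\widetilde\beta'}_{\widetilde A}$ for every integer $\widetilde\beta'$ outside $\mbd sRes(\widetilde A)$ (after the replacement $\widetilde\beta'\mapsto-\widetilde\beta'$ of Proposition \ref{prop:dualGKZ}). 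In particular the index sets $I$, $I'$ of Theorem \ref{thm:genseq} are singletons and may be taken to consist of any prescribed such integer vector.

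Next I would rerun, essentially verbatim, the Hodge-theoretic bookkeeping from the proof of Proposition \ref{prop:MHMgenseq}: replace $\mco_S$ by $\mbq^H_S=a_S^*\mbq^H_{pt}$, apply the two Radon triangles in $D^b MHM(\mbp(V'))$ (the base change needed is \cite[(4.4.3)]{SaitoMHM}), take the long exact cohomology sequences, and track the degree shifts and Tate twists forced by the convention that $\mbq^H_X$ sits in degree $\dim X$. Since $B=A$, the middle direct-image terms are $\mch^{d+n}(\varphi_{A,*}\mbq^H_{S\times W})$ and, after dualizing, $\mch^{-d-n}(\varphi_{A,!}\mbd\mbq^H_{S\times W})$; the flanking terms are the constant variations of mixed Hodge structure $\mch^{n+1}(H^{d-1}(S,\mbc)\otimes\mbq^H_V)$, $\mch^{n+1}(H^{d}(S,\mbc)\otimes\mbq^H_V)$ and their duals; and the single GKZ term in each sequence is $(\mcm^{\widetilde\beta}_{\widetilde A})^H=\mch^{d+n+1}({^*}\mcr^{\circ}_c(g_*\mbq^H_S))$, resp. $(\mcm^{-\widetilde\beta'}_{\widetilde A})^H=\mch^{-d-n-1}({^!}\mcr^{\circ}_c(g_!\mbd\mbq^H_S))$, of Definition \ref{def:HodgeGKZ}, whose underlying $\mcd$-module is $\FL(h_+\mco_T)\simeq\mcm^{\widetilde\beta}_{\widetilde A}$ by the previous paragraph. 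The second exact sequence follows from the first by applying $\mbd$, as in Proposition \ref{prop:MHMgenseq}.

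The one point that requires a word of care --- and the closest thing to an obstacle --- is that Definition \ref{def:HodgeGKZ} fixes a single mixed Hodge module, whereas the statement ranges over all integer $\widetilde\beta\notin sRes(\widetilde A)$ (resp. $\widetilde\beta'\notin\mbd sRes(\widetilde A)$). Here one invokes that all the relevant GKZ-systems are mutually isomorphic as regular holonomic $\mcd$-modules, each being isomorphic to $\FL(h_+\mco_T)$ through the translation $\mco_T\simeq\mco_T\underline{y}^{\widetilde\alpha}$ --- the same mechanism already appearing in the isomorphisms \eqref{eq:betansRes} in the proof of Lemma \ref{lem:sResnormal} --- so that the mixed Hodge module structure transported onto $\mcm^{\widetilde\beta}_{\widetilde A}$ does not depend on the chosen integral representative. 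Beyond this I do not anticipate any real difficulty: all the analytic ingredients (exactness of $h_{\widetilde A,+}$ and $h_{\widetilde A,\dag}$, the $t$-exactness and duality-compatibility of $\FL$ on monodromic modules, and base change for algebraic mixed Hodge modules) have already been assembled in Section \ref{sec:LaurentPolGKZ} and in the proof of Proposition \ref{prop:MHMgenseq}.
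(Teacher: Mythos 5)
Your proposal is correct and follows essentially the same route as the paper, which simply observes that the argument of Proposition \ref{prop:MHMgenseq} goes through verbatim once Corollary \ref{cor:exseqA} is substituted for Theorem \ref{thm:genseq}; your additional remarks on the collapse of the index sets (all elementary divisors being $1$) and on the independence of the transported Hodge structure from the chosen integral representative $\widetilde{\beta}$ are exactly the points implicit in that reduction.
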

\begin{proof}
The proof of this proposition is parallel to the proof of Proposition \ref{prop:MHMgenseq}. One just has to use Corollary \ref{cor:exseqA} instead of Theorem \ref{thm:genseq}.
\end{proof}

We are finally able to proof the main results of this section. Let $A$ be a $d \times n$ integer matrix with columns $\underline{a}_1, \ldots , \underline{a}_n$ so that there exists a linear function $h: \mbz^d \ra \mbz$ satisfying $h(\underline{a}_i)=1$ for all $i$. Recall that a GKZ-system corresponding to this matrix is called homogeneous. Schulze and Walther have shown in \cite{SchulWalth1} that a GKZ-system is regular holonomic if and only if it is homogeneous. 
\begin{thm}\label{thm:MHM} 
The homogeneous GKZ-system $\mcm_{A}^{\beta}$ carries a mixed Hodge module structure if $\beta \in \mbz^d$ and if one of the following conditions are satisfied
\begin{enumerate}
\item $\beta \notin sRes(A)$,
\item $\beta \notin \mbd sRes(A)$.
\end{enumerate}
\end{thm}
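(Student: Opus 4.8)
The plan is to deduce Theorem \ref{thm:MHM} directly from the two exact sequences established in Proposition \ref{prop:MHMexseqA}, together with the classification results of the first section (Proposition \ref{prop:dualGKZ} and Proposition \ref{prop:dualSW}). The key observation is that the objects $(\mcm^{\widetilde{\beta}}_{\widetilde{A}})^H$ and $(\mcm^{-\widetilde{\beta}'}_{\widetilde{A}})^H$ of Definition \ref{def:HodgeGKZ} already \emph{are} mixed Hodge modules on $V = \mbc_{\lambda_0}\times W$ whose underlying regular holonomic $\mcd$-modules are the GKZ-systems $\mcm^{\widetilde{\beta}}_{\widetilde{A}}$ resp. $\mcm^{-\widetilde{\beta}'}_{\widetilde{A}}$ (Proposition \ref{prop:MHMgenseq}, specialized via Proposition \ref{prop:MHMexseqA}). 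So the issue is not to put a Hodge structure on the homogenized system on $V$, but rather to descend it to the system $\mcm^\beta_A$ on $W$.

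First I would treat case (1). Given $A$ homogeneous and $\beta \in \mbz^d \setminus sRes(A)$, apply Lemma \ref{lem:betalift} to choose $\beta_0 \in \mbz$ large enough so that $\widetilde{\beta} = (\beta_0,\beta) \notin sRes(\widetilde{A})$; note $\widetilde{A}$ is again homogeneous with respect to the linear form $(\lambda_0,\lambda)\mapsto \lambda_0$ (or more precisely the natural extension of $h$), so the hypotheses of Proposition \ref{prop:MHMexseqA} are met. That proposition furnishes $(\mcm^{\widetilde{\beta}}_{\widetilde{A}})^H \in MHM(V)$ with $Dmod((\mcm^{\widetilde{\beta}}_{\widetilde{A}})^H) \simeq \mcm^{\widetilde{\beta}}_{\widetilde{A}}$. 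Now I would invoke Lemma \ref{lem:invimage}: the inclusion $i_1 : \{1\}\times W \hookrightarrow V$ is non-characteristic for $\mcm^{\widetilde{\beta}}_{\widetilde{A}}$, and $i_1^+\mcm^{\widetilde{\beta}}_{\widetilde{A}} \simeq \mcm^\beta_A$. Since $i_1^*[-1] = i_1^![1]$ up to Tate twist is not quite the point — the clean statement is that for a non-characteristic inclusion the $\mcd$-module inverse image is concentrated in one degree and agrees (up to the standard shift) with both $i_1^*$ and $i_1^!$ applied to the underlying mixed Hodge module. Thus $\mch^0 i_1^* (\mcm^{\widetilde{\beta}}_{\widetilde{A}})^H$ (appropriately shifted/twisted) is a mixed Hodge module on $\{1\}\times W \cong W$ whose underlying $\mcd$-module is $i_1^+\mcm^{\widetilde{\beta}}_{\widetilde{A}} \simeq \mcm^\beta_A$. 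This endows $\mcm^\beta_A$ with a mixed Hodge module structure, proving case (1).

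For case (2), the argument is dual. Given $\beta \notin \mbd sRes(A)$, I would use Proposition \ref{prop:dualGKZ} to write $\mcm^\beta_A \simeq \mbd\mcm^{\beta''}_A$ for a suitable $\beta'' \in \mbq^d\setminus sRes(A)$ with $\beta'' \equiv -\beta \bmod \mbz^d$ — one must check $\beta''$ can be taken in $\mbz^d$, which holds since $\beta \in \mbz^d$. Alternatively, and more directly, one uses the second exact sequence of Proposition \ref{prop:MHMexseqA}, which exhibits a mixed Hodge module $(\mcm^{-\widetilde{\beta}'}_{\widetilde{A}})^H$ with underlying $\mcd$-module $\mcm^{-\widetilde{\beta}'}_{\widetilde{A}}$ for $\widetilde{\beta}' \notin \mbd sRes(\widetilde{A})$; lift $\beta$ to such a $\widetilde{\beta}'$ (using the definition of $\mbd sRes$ and the analogue of Lemma \ref{lem:betalift}, or simply set $\widetilde{\beta}' = (0,\beta)$ and check it lies outside $\mbd sRes(\widetilde{A})$, which follows from the cone description since $\mbd sRes(\widetilde{A})$ is built from translates of linear spans of faces not containing $\widetilde{\underline{a}}_0$). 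Then restrict along $i_1$ exactly as before: $i_1$ is non-characteristic by Lemma \ref{lem:invimage}(1), and $i_1^+\mcm^{-\widetilde{\beta}'}_{\widetilde{A}} \simeq \mcm^{-\beta'}_A$ by Lemma \ref{lem:invimage}(2), and since $-\beta' \equiv \beta \bmod \mbz^d$ one has $\mcm^{-\beta'}_A \simeq \mcm^\beta_A$ as $\beta \in \mbz^d$ (using that GKZ-systems with integer-congruent parameters differing by a lattice vector in $\mbn A$ are related by $\p^{\beta-\beta'}$, cf. Proposition \ref{prop:gkzuniquemorph}, and in the non-resonant/semi-nonresonant range these are isomorphisms). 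This produces the desired mixed Hodge module structure on $\mcm^\beta_A$.

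The main obstacle I anticipate is bookkeeping of shifts, Tate twists, and the precise compatibility of the non-characteristic pullback $i_1^*$ on mixed Hodge modules with the $\mcd$-module functor $i_1^+$: one needs that for a non-characteristic morphism the functor $i_1^*$ (which a priori lives in the derived category) is exact up to shift and that $Dmod \circ i_1^* = i_1^+[\dim W - \dim V] \circ Dmod = i_1^+[-1]\circ Dmod$, combined with the vanishing $\mch^k i_1^+\mcm^{\widetilde{\beta}}_{\widetilde{A}} = 0$ for $k\neq 0$ from Lemma \ref{lem:invimage}(1). A secondary subtlety is ensuring the lifted parameter $\widetilde{\beta}$ (resp. $\widetilde{\beta}'$) can simultaneously be chosen integral and outside $sRes(\widetilde{A})$ (resp. $\mbd sRes(\widetilde{A})$) — but this is exactly the content of Lemma \ref{lem:betalift} for the first case, and for the second case it follows from the explicit cone description of $\mbd sRes(\widetilde{A})$ together with $\mbd sRes(\widetilde{A}) \cap -(\mbr_+\widetilde{A})^\circ = \emptyset$. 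Once these compatibilities are pinned down, both cases are immediate corollaries of Proposition \ref{prop:MHMexseqA} and Lemma \ref{lem:invimage}.
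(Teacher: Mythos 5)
Your argument is correct in substance, but it is worth recording how it relates to the paper's own (very terse) proof. The paper first performs a $GL_d(\mbz)$ change of coordinates $L$ whose first row is the homogeneity function $h$ (which is primitive since $h(\underline{a}_i)=1$ and $\mbz A=\mbz^d$), reducing to a matrix $\check{A}=L\cdot A$ with first row $(1,\dots,1)$ and noting that $sRes$ and $\mbd sRes$ transform equivariantly; it then simply appeals to ``the proof of Proposition \ref{prop:MHMexseqA}''. The most natural reading is that the Radon-transform construction is rerun one dimension down: $h_{\check{A}}:(\mbc^*)^d\to\mbc^n$ is the affine cone over a map $g:(\mbc^*)^{d-1}\to\mbp^{n-1}$, so $\mcm^{\check{\beta}}_{\check{A}}\simeq\FL(h_{\check{A},+}\mco_{(\mbc^*)^d})\simeq\mcr^\circ_c(g_+\mco_{(\mbc^*)^{d-1}})$ already underlies a cohomology of ${^*}\mcr^\circ_c(g_*\mbq^H_{(\mbc^*)^{d-1}})$, with no extra variable $\lambda_0$ and no restriction. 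You instead stay with the $(d+1)\times(n+1)$ homogenization on $V=\mbc^{n+1}$, take $(\mcm^{\widetilde{\beta}}_{\widetilde{A}})^H$ from Proposition \ref{prop:MHMexseqA} via Lemma \ref{lem:betalift}, and descend to $W$ by the non-characteristic restriction $i_1^*$ of Lemma \ref{lem:invimage}. That is a perfectly legitimate alternative — it is exactly the mechanism the paper itself deploys in Theorem \ref{thm:genseqres} and in the quasi-unipotence theorem, where $i_1^+\circ Dmod\simeq Dmod\circ\mch^\bullet i_1^*$ is used for precisely these objects. What your route buys is that you only invoke the \emph{statements} of Proposition \ref{prop:MHMexseqA} and Lemma \ref{lem:invimage}, at the cost of the shift/twist bookkeeping for $i_1^*$ that you correctly flag; the paper's route avoids the hyperplane restriction but requires rereading its construction in a setting not literally covered by the statement of that Proposition.

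Two points need tightening. First, Lemma \ref{lem:invimage}(2) is proved under the hypothesis that the first row of $A$ is literally $(1,\dots,1)$ (this is what forces the box operators of $\widetilde{A}$ to be independent of $\p_{\lambda_0}$), not merely that some $h$ with $h(\underline{a}_i)=1$ exists; so your argument must begin with the same $GL_d(\mbz)$ reduction the paper performs, and you should say so. Second, for case (2) your ``Route B'' (choosing $\widetilde{\beta}'=(0,\beta)$, the sign discrepancy between $\mcm^{\widetilde{\beta}'}_{\widetilde{A}}$ and $\mcm^{-\widetilde{\beta}'}_{\widetilde{A}}$ in Proposition \ref{prop:MHMgenseq}, and the congruence juggling) is the shakier of your two options. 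The duality route is cleaner and fully supported by the paper: pick $\beta''\in\mbz^d\cap(\delta_A+\mbr_+A)$, so $\beta''\notin sRes(A)$ by Lemma \ref{lem:sResnonnormal} and $\mcm^{\beta''}_A$ underlies a mixed Hodge module by case (1); since $\beta\in\mbz^d$ satisfies $\beta\equiv-\beta''\bmod\mbz^d$ and $\beta\notin\mbd sRes(A)$, Proposition \ref{prop:dualGKZ} gives $\mbd\mcm^{\beta''}_A\simeq\mcm^{\beta}_A$, and $\mbd$ preserves $MHM(W)$.
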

\begin{proof}
Let $l: \mbz^d \ra \mbz^d$ be an isomorphism, $L$ be the corresponding invertible integer matrix and $l_\mbc :\mbc^d \ra \mbc^d$ its $\mbc$-linear extension. First notice, that for $\check{\beta}:= l(\beta)$ and $\check{A}:= L \cdot A$ the GKZ-systems $\mcm^{\check{\beta}}_{\check{A}}$ and $\mcm^\beta_A$ are isomorphic. Furthermore we have $l_\mbc(sRes(A)) = sRes(\check{A})$ because we have $l(deg(S_{A})) = deg(S_{\check{A}})$ and $l(\mbd sRes(A) \cap \mbz^d) =\mbd sRes(\check{A}) \cap \mbz^d$. It is easy to see that for any $A$ there exists an $l$ resp. $L$ such that $L \cdot A$ is a $d \times n$ matrix with first row equal to $(1, \ldots ,1)$. But for the corresponding GKZ-system $\mcm^{\check{\beta}}_{\check{A}}$ the proof of Proposition \ref{prop:MHMexseqA} shows that it carries a mixed Hodge module structure.
\end{proof}
\begin{thm}
The homogeneous GKZ-system $\mcm^{\beta}_{A}$ has quasi-unipotent monodromy if $\beta \in \mbq^d$ and if one of the following conditions are satisfied
\begin{enumerate}
\item $\beta \notin sRes(A)$,
\item $\beta \notin \mbd sRes(A)$.
\end{enumerate}
\end{thm}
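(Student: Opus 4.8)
The plan is to exhibit $\mcm^{\beta}_{A}$ as a direct summand, in the category of $\mcd_W$-modules, of a regular holonomic module that underlies a mixed Hodge module. Since the perverse sheaf underlying a mixed Hodge module has quasi-unipotent local monodromy (\cite{SaitoMHM}), and since a perverse direct summand inherits this property — the local monodromy on a summand is a direct factor of the ambient one, so its eigenvalues are roots of unity as soon as the ambient ones are — this yields the claim. Observe that, unlike in Theorem \ref{thm:MHM}, for non-integral rational $\beta$ one cannot expect $\mcm^{\beta}_{A}$ itself to underlie a mixed Hodge module: the Hodge-theoretic construction of Definition \ref{def:HodgeGKZ} produces the \emph{whole} direct sum $\bigoplus_{\beta'\in I}\mcm^{\beta'}_{A}$ of (Galois-conjugate) systems as a single mixed Hodge module, with no reason to split in $MHM(W)$; hence one genuinely has to argue at the level of perverse sheaves.

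First I would reduce, exactly as in the proof of Theorem \ref{thm:MHM}, to the case where the first row of $A$ is $(1,\ldots,1)$, via an integral isomorphism $L\in GL(d,\mbz)$; this replaces $\beta$ by $L\beta\in\mbq^d$, preserves $\mbz A=\mbz^d$, preserves both $\beta\notin sRes(A)$ and $\beta\notin\mbd sRes(A)$ (as $l_\mbc$ carries $sRes(A)$ to $sRes(LA)$ and $\mbd sRes(A)$ to $\mbd sRes(LA)$), and does not change the isomorphism class of the underlying perverse sheaf. Assume now $\beta\in\mbq^d\setminus sRes(A)$ (the first case). Choose $e=(e_1,\ldots,e_d)$ with $e_k\beta_k\in\mbz$ for all $k$, set $B:=\diag(e_1,\ldots,e_d)\cdot A$, and pick a section $\sigma:(\mbq/\mbz)^d\to\mbq^d\setminus sRes(A)$ with $\sigma(pr(\beta))=\beta$ — possible precisely because $\beta\notin sRes(A)$. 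Then $\beta\in I:=\frac{1}{e}\mbz^d\cap im(\sigma)$, so $\mcm^{\beta}_{A}$ occurs literally as one of the summands of $\bigoplus_{\beta'\in I}\mcm^{\beta'}_{A}$ appearing in Theorem \ref{thm:genseqres}.

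It then remains to put a mixed Hodge module structure on $\bigoplus_{\beta'\in I}\mcm^{\beta'}_{A}$, which I would do by running the proof of Theorem \ref{thm:genseqres} in the Hodge setting. Lift $\sigma$ to a section $\widetilde\sigma:(\mbq/\mbz)^{d+1}\to\mbq^{d+1}\setminus sRes(\widetilde A)$ by Lemma \ref{lem:betalift}; Proposition \ref{prop:MHMgenseq} then furnishes the mixed Hodge module $(\bigoplus_{\widetilde\beta\in\widetilde I}\mcm^{\widetilde\beta}_{\widetilde A})^H$ on $V$, whose underlying $\mcd_V$-module is $\bigoplus_{\widetilde\beta\in\widetilde I}\mcm^{\widetilde\beta}_{\widetilde A}$. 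By Lemma \ref{lem:invimage}(1) the inclusion $i_1:\{1\}\times W\hookrightarrow V$ is non-characteristic with respect to this $\mcd$-module, so $i_1^*$ of the mixed Hodge module is concentrated in a single cohomological degree — where it is an honest mixed Hodge module — and, by Lemma \ref{lem:invimage}(2), its underlying $\mcd_W$-module is $i_1^+(\bigoplus_{\widetilde\beta\in\widetilde I}\mcm^{\widetilde\beta}_{\widetilde A})=\bigoplus_{\beta'\in I}\mcm^{\beta'}_{A}$. This settles the first case; the case $\beta\notin\mbd sRes(A)$ is entirely parallel, using a section $\sigma':(\mbq/\mbz)^d\to\mbq^d\setminus\mbd sRes(A)$ through $\beta$, the second exact sequence of Theorem \ref{thm:genseqres}, and the dual half of Proposition \ref{prop:MHMgenseq}.

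The combinatorial bookkeeping (choosing $e$ and $\sigma$, lifting to $\widetilde\sigma$, and checking that restricting the $\widetilde A$-direct sum along $i_1$ reproduces exactly the $A$-direct sum) is available from Section \ref{sec:LaurentPolGKZ} and the proof of Theorem \ref{thm:genseqres}, so the main obstacle lies on the Hodge-module side: one needs that $i_1^*$ of a mixed Hodge module along a non-characteristic inclusion again lands in the abelian category $MHM$ (up to the evident shift) and computes $i_1^+$ on underlying $\mcd$-modules — the Hodge-theoretic analogue of the exactness of non-characteristic $\mcd$-module inverse image. Granting this, together with the elementary fact that quasi-unipotence of local monodromy passes to perverse direct summands, the theorem follows.
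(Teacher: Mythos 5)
Your proposal is correct and follows essentially the same route as the paper: reduce to first row $(1,\ldots,1)$, clear denominators of $\beta$ via a diagonal (the paper uses a scalar) matrix $e$ to form $B$, choose the section $\sigma$ through $\beta$ and lift it by Lemma \ref{lem:betalift}, apply the non-characteristic restriction $i_1^*$ to the mixed Hodge module of Proposition \ref{prop:MHMgenseq}, and conclude because quasi-unipotence of local monodromy passes from $rat(\mcn)\otimes\mbc$ to the perverse direct summand $DR(\mcm^\beta_A)$. If anything, you are slightly more explicit than the paper in justifying that $i_1^*$ lands in a single cohomological degree and computes $i_1^+$ on underlying $\mcd$-modules.
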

\begin{proof} 
As in the proof of Theorem \ref{thm:MHM} we can reduce to the case where the matrix $A$ of the GKZ-system has $(1,\ldots,1)$ as its first row. Let $e \in \mbn$ such that $e \cdot \beta \in \mbz^{d}$ and set $B := e\cdot I_{d\times d} \cdot A$. Assume $\beta \notin sRes(A)$. Using Lemma \ref{lem:sResnonnormal} we can find a section $\sigma: (\mbq/\mbz)^d \ra \mbq^{d} \setminus sRes(A)$ with $\beta \in im(\sigma)$. This section lifts to a section $\widetilde{\sigma}: (\mbq/\mbz)^{d+1} \ra \mbq^{d+1} \setminus sRes(\widetilde{A})$ by Lemma \ref{lem:betalift}. Applying $i_1^+$ to the underlying $\mcd$-module of the third term of the corresponding exact sequence in Proposition \ref{prop:MHMgenseq}, we get the following isomorphisms
\[
\bigoplus_{\beta' \in I} \mcm^{\beta'}_A \simeq i_1^+ \bigoplus_{\widetilde{\beta}' \in \widetilde{I}} \mcm^{\widetilde{\beta}'}_{\widetilde{A}} \simeq i_1^+ Dmod(\mch^{d+n+1}( {^*}\mcr^{\circ}_c(g_* \mbq^H_S))) \simeq Dmod \mch^{d+n+1}(i_1^*( {^*}\mcr^{\circ}_c(g_* \mbq^H_S)))
\]
which shows that $\mcm^{\beta}_{A}$ is isomorphic to a direct summand of a mixed Hodge module $\mcn$.

There exists a stratification $\mcs = \{S_i\}$ such that the restriction to $S_i \setminus S_{i+1}$ is a smooth mixed Hodge module, i.e. it is a polarizable variation of mixed Hodge structures. Now it follows from standard Hodge theory that the underlying local system of this restriction has (local) quasi-unipotent monodromy in the sense of \cite{Ka3}. But $\mcm_{A}^{\beta}$ is a direct summand of $Dmod(\mcn)$ from which follows that $DR(\mcm_{A}^{\beta})$ is a direct summand in $rat(\mcn) \otimes \mbc$. But this shows the first claim. The proof of the second claim is similar.
\end{proof}

\begin{rem}
If $A$ satisfies $\mbz A = \mbz^d$ one can easily show  by mimicking the proof of Theorem \ref{thm:genseq} resp. Theorem \ref{thm:genseqres}, that for $\beta \notin sRes(A)$ we have the isomorphism
\begin{equation}\label{eq:genbeta}
\mcm_A^\beta \simeq i_1^+ \mcr^\circ_c(g_+ \mco_S \cdot \underline{y}^\beta)
\end{equation}
Since the the underlying local system $\mcl_\beta$ of the $\mcd$-module $\mco_S \cdot \underline{y}^\beta$ has no $\mbq$-structure for general $\beta$, we can not endow it with a mixed Hodge module structure.  However, the assumption of the existence of a $\mbq$-structure can be relaxed. Schmid and Vilonen sketched in \cite{SchVil} the construction of a category of so-called complex (algebraic) mixed Hodge modules which do not rely on the existence of an underlying $\mbq$-structure. In order to be able to define a conjugate Hodge filtration they need however a polarization. Notice that for $\beta \in \mbr^d$  the rank one local system $\mcl_\beta$ has monodromy eigenvalues with absolute value one, hence it can be equipped with a hermitian pairing.  Thus the $\mcd$-module $\mco_S\cdot \underline{y}^{\beta}$ can be equipped with the structure of a complex mixed Hodge module in the sense of loc. cit. . Using the stability of the derived category of complex mixed Hodge modules under the (proper) direct image functors we can conclude by \eqref{eq:genbeta} that $\mcm^\beta_A$ carries the structure of a complex mixed Hodge module for $\beta \in \mbr^d$ and $\beta \notin sRes(A)$ (equivalently for $\beta \notin \mbd sRes(A))$. Furthermore, this shows that the mixed Hodge modules appearing in Definition \ref{def:HodgeGKZ} split into complex mixed Hodge modules corresponding to the direct sum of GKZ-systems.

\end{rem}

\bibliographystyle{amsalpha}
\bibliography{/home/thomas/Dropbox/Bibtex/My.bib}
$ $\\[25pt]
\noindent
Thomas Reichelt\\
Lehrstuhl f\"ur Mathematik VI\\
Universit\"at Mannheim\\
68131 Mannheim\\
Germany\\
Thomas.Reichelt@math.uni-mannheim.de
\end{document}